\pdfoutput=1
\documentclass[bj,preprint,noshowframe,numbers]{imsart}

\usepackage{amsmath,amssymb,amsthm,mathtools}
\usepackage{microtype}
\usepackage{tikz}
\usetikzlibrary{patterns,decorations.pathreplacing} \tikzset{box/.style={draw,line width=.35pt},ladder/.style={draw,line width=.35pt},  opath/.style={draw=blue!70!black,line width=.5pt},  meet/.style={circle,fill=red,inner sep=1.1pt},  cote/.style={<->,>=latex,line width=.3pt}}
\usepackage{hyperref}
\hypersetup{
  pdftitle={Critical oriented percolation enhanced along random supercritical horizontal lines},
  pdfauthor={Lucas Leproult, Maria Eulália Vares}
}

\startlocaldefs

\theoremstyle{plain}
\newtheorem{theorem}{Theorem}[section]
\newtheorem{proposition}[theorem]{Proposition}
\newtheorem{lemma}[theorem]{Lemma}
\newtheorem{corollary}[theorem]{Corollary}

\theoremstyle{definition}
\newtheorem{definition}[theorem]{Definition}
\newtheorem{remark}[theorem]{Remark}

\newcommand{\Z}{\mathbb{Z}}
\newcommand{\N}{\mathbb{N}}
\newcommand{\R}{\mathbb{R}}
\newcommand{\Ze}{\widetilde{\mathbb{Z}}^{2}_{+}}
\newcommand{\pc}{p_{c}}
\newcommand{\pG}{p_{G}}
\newcommand{\pB}{p_{B}}
\newcommand{\PP}{\mathbb{P}}
\newcommand{\EE}{\mathbb{E}}
\newcommand{\Pxi}{\mathbb{P}^{\xi}}
\newcommand{\Exi}{\mathbb{E}^{\xi}}
\newcommand{\ind}{\mathbf{1}}
\newcommand{\reach}{\zeta}

\endlocaldefs

\begin{document}

\markboth{Lucas Leproult and Maria Eulália Vares}{Critical oriented percolation enhanced along lines}
\thispagestyle{plain}
\begin{center}
{\LARGE Critical oriented percolation enhanced along\\ random supercritical horizontal lines\par}
\vspace{1.5em}
{\large Lucas Leproult\footnotemark[1]\qquad Maria Eulália Vares\footnotemark[2]\par}
\vspace{1em}
{\large October 13, 2026\par}
\end{center}
\footnotetext[1]{Institut Polytechnique de Paris, Université Paris-Saclay, ENSAE Paris, 5 avenue Henry Le Chatelier, 91120 Palaiseau, France; \texttt{lucas.leproult@ip-paris.fr}}
\footnotetext[2]{Instituto de Matemática, Universidade Federal do Rio de Janeiro, Av. Athos da Silveira Ramos 149, CEP 21941-909, Rio de Janeiro, RJ, Brazil; \texttt{eulalia@im.ufrj.br}}
\vspace{1em}

\begin{center}
\textbf{Abstract}
\end{center}
{\small\leftskip=0.075\textwidth \rightskip=0.075\textwidth
\noindent We consider the oriented site percolation model in a random environment introduced by
Kesten, Sidoravicius and Vares (2022), in which the horizontal lines of the lattice are
independently declared bad or good, sites being open with a probability that
depends only on the type of their line. We study the boundary case where the bad lines are
critical and the good ones supercritical, so that the model is critical oriented site percolation enhanced along a
random set of good lines. We prove that percolation occurs for any positive density of the good lines. An arbitrarily sparse set
of arbitrarily weakly enhanced lines suffices, the density of good lines and the
strength of the enhancement being decoupled. The proof adapts the renormalization scheme of Archer, Hartarsky, Kolesnik,
Olesker-Taylor, Schapira and Valesin (2026). We show that the edge of the process travels at a
positive speed, which yields crossing estimates for tilted boxes; during runs of bad
lines, we rely on the box-crossing property of Duminil-Copin, Tassion and Teixeira (2018); and
the renormalized model dominates the oriented percolation with geometric defects of
Hilário, Sá, Sanchis and Teixeira (2026).

\medskip
\noindent\textbf{Keywords:} critical percolation; enhancement; oriented percolation; random environment.

\noindent\textbf{MSC2020 subject classifications:} 60K35; 60K37; 82B43.
\par}

\section{Introduction}
\label{sec:intro}

We consider the model of Kesten, Sidoravicius and
Vares~\cite{KestenSidoraviciusVares2022}, which we recall in their notation. On
the rotated and rescaled lattice $\Ze:=\{(x,n)\in\Z\times\Z_{+}: x+n \text{ is even}\}$,
with graph structure obtained by placing edges between any two sites of $\Ze$ at
Euclidean distance $\sqrt{2}$ from each other, this is the following oriented
(northwest--northeast) site percolation model: the \emph{lines}
$H_{i}:=\{(x,n)\in\Ze : n=i\}$ are declared to be bad or good with
probabilities $\delta$ and $1-\delta$ respectively, independently of each other.
Given the environment of lines, sites on good lines are open with probability
$\pG$, and sites on bad lines are open with probability $\pB$, independently of
each other.

On a suitable probability space $(\Omega,\mathcal{F},\PP)$, where
$\PP=\PP_{\pG,\pB,\delta}$, we consider a Bernoulli sequence $\xi=(\xi_{i})_{i\in\Z_{+}}$ with $\PP(\xi_{i}=1)=\delta$,
which determines whether $H_{i}$ is bad or good, and a family of $\{0,1\}$-valued occupation variables
$\eta=(\eta_{z})_{z\in\Ze}$ which are conditionally independent given $\xi$, with
$\PP(\eta_{z}=1\mid\xi)=\pB$ if $z\in H_{i}$ with
$\xi_{i}=1$, and $\PP(\eta_{z}=1\mid\xi)=\pG$ if
$z\in H_{i}$ with $\xi_{i}=0$. If $\eta_{z}=1$ the site $z$ is open, and
otherwise it is closed. The sequence $\xi$ is called the
\emph{environment}, and we write $\Pxi:=\PP(\,\cdot\mid\xi)$ for the conditional
law given the environment, with expectation $\Exi$. We call the family $\eta$ the \emph{occupation configuration}.

An \emph{open oriented path} on the graph $\Ze$ is a path along which the second
coordinate is strictly increasing and all of the sites are open except possibly the initial one. Since the initial site of an open path is not required to be open, being open
is a condition to reach a site, not to leave it. From now on, all paths are oriented, and an \emph{open path} always means an open
oriented path. The open
cluster $C_{z}$ of a site $z\in\Ze$ is the set of sites which can be reached
from $z$ by an open path. We write $C_{0}$ for the open cluster of $(0,0)$ and $\Theta(\pG,\pB,\delta)=\PP(C_{0}\ \text{is infinite})$,
and we say that percolation occurs if $\Theta(\pG,\pB,\delta)>0$.

Let $\pc$ denote the critical probability for the standard oriented site
percolation model on $\Z_{+}\times\Z_{+}$, which, rotated by $\pi/4$, is the
homogeneous version of our model. Kesten, Sidoravicius and
Vares~\cite{KestenSidoraviciusVares2022} proved that, for $\pG>\pc$ and
$\pB>0$, the model percolates as soon as the density $\delta$ of bad lines is
small enough, depending on $\pG$ and $\pB$. They also observe that a
Peierls-type argument gives some $\pB>0$ and $\delta>0$ for which
$\Theta(\pG,\pB,\delta)=0$ even if $\pG=1$. For such subcritical bad lines, a
positive density of good lines does not suffice, however large $\pG$. In a similar layered model on the three-dimensional hexagonal lattice, with
additional unoriented edges within the layers, de Lima, Ungaretti and
Vares~\cite{LimaUngarettiVares2024} prove that, when percolation within the
layers is subcritical, bad layers may prevent percolation even when all upward
edges from the good layers are open, whereas critical percolation within the
layers yields percolation even when all layers are bad.

\subsection{Main result}

Here, we consider the boundary case $\pB=\pc$, $\pG>\pc$,
in which the bad lines are critical and the good ones supercritical. The model is then an enhancement of
critical oriented site percolation: every site is open with probability at least
$\pc$, and the sites of the good lines, which have density $1-\delta$, are open with the larger probability $\pG$. We ask how few good lines suffice for this
enhancement to make the model percolate, even when $\pG$ is arbitrarily close to
$\pc$. In other words, we ask whether the density $1-\delta$ of good lines needed
for percolation has to depend on the strength $\pG-\pc$ of the enhancement.

Our main result is the following.

\begin{theorem}
\label{thm:main}
Let $\pB=\pc$ and $\pG>\pc$. Then
\[
  \Theta(\pG,\pc,\delta)>0
  \qquad\text{for every } \delta<1 .
\]
In fact, for every $\delta\in(0,1)$,
\begin{equation}
  \label{eq:quenched}
  \PP(C_{0}\ \text{is infinite}\mid\xi)>0
  \qquad\text{for almost every }\xi.
\end{equation}
\end{theorem}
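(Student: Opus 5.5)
The plan follows the outline in the abstract. We build a multiscale renormalization in the style of Archer, Hartarsky, Kolesnik, Olesker-Taylor, Schapira and Valesin, with blocks that are unions of consecutive lines. A block is declared good or bad from the environment $\xi$ restricted to it, and we show that the renormalized model stochastically dominates oriented percolation with geometric (line) defects in the sense of Hilário, Sá, Sanchis and Teixeira. For that model, percolation holds whenever the open probability on non-defect lines is close enough to $1$, for any line-defect structure with suitable geometric tails. The quenched statement \eqref{eq:quenched} then follows: the renormalized environment is an i.i.d. function of $\xi$ with geometric tails, so for almost every $\xi$ the defect model percolates from some renormalized site with positive probability. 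Finite energy (open a finite path to it from the origin, which only costs a $\xi$-dependent positive factor) transfers this to $C_0$. Integrating over $\xi$ then gives $\Theta>0$.

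\textbf{Step 1 (good lines: positive speed).} Fix $\pG>\pc$ and let $\delta$ be arbitrary. Consider a stretch of $L$ consecutive lines containing at least $\varepsilon L$ good lines. We will show that the right edge $r_n$ of the process started from a half-line gains a definite drift. Coupling with the critical process, $\Exi$ of the increment of $r_n$ across a good line exceeds the critical increment by some $c(\pG)>0$. Across a bad line the critical edge has zero drift in the limit, using that critical oriented percolation has edge speed $0$. Summed over the stretch, this gives speed $\ge c\varepsilon$ in expectation. A martingale/concentration argument for the edge, valid as long as the process survives, then yields crossing estimates for boxes tilted by a slope $\alpha=\alpha(\varepsilon,\pG)>0$. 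The key consequence is that the open cluster started from a segment of width $\ell$ crosses a box of height $L$ and width $\ell$ tilted by slope $\alpha$, with high probability.

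\textbf{Step 2 (bad runs: box crossings).} During a run of $k$ consecutive bad lines the model is exactly critical. Here we invoke the box-crossing property of Duminil-Copin, Tassion and Teixeira: with probability bounded below uniformly in scale, a box of height $k$ and width $C\sqrt{k}$ (in the appropriate critical scaling) is crossed from a positive fraction of its bottom. The loss is multiplicative and bounded, and it can be compensated because the drift from Step 1 widens the occupied interval linearly before the next bad run. Bad runs have geometric lengths, so at scale $L$ a run of length $\gg\log L$ is rare. Such rare runs are the geometric defects of the renormalized model, and every renormalized block whose environment contains no such run is shown to be good with probability $1-o(1)$ as $L\to\infty$.

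\textbf{Step 3 and main obstacle.} We then iterate scales $L_{k+1}=L_k^{\gamma}$ as in Archer et al.\ and verify the hypotheses of the geometric-defects theorem, including the needed independence, which holds because blocks depend on disjoint line sets via oriented structure. We expect the main obstacle to be Step 1 combined with Step 2: the edge speed gained on sparse, weakly enhanced lines must beat the diffusive loss $\sqrt{k}$ from critical runs uniformly, without assuming monotonic convergence of critical exponents. We would first handle this by working only at mesoscopic scales where bad runs have length $\le C\log L$. There the critical fluctuations are $O(\sqrt{\log L})$ while the linear gain is $c\varepsilon L$, which gives a comfortable margin.
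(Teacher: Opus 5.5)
Your Step 1 does not work as stated, and it is the core of the argument. You split $r_n$ into per-line increments and assert two things: a good line adds $c(\pG)$, and a bad line contributes zero drift because the critical edge speed vanishes. But the edge is not Markov. Its conditional increment across a critical line depends on the configuration behind it, and after good lines that configuration is not that of the critical process started from $H_0^-$. For instance, a good line may create an isolated rightmost point. That point dies on the next critical line with probability $(1-\pc)^2$ and sends the edge back to the bulk, so the conditional drift across bad lines can be very negative; $\lim_n\EE_{\pc}[r_n]/n=0$ says nothing about it. What is missing is a \emph{persistence} estimate that compares final edges rather than increments. The paper compares $\xi^{m,G}$ and $\xi^{m,B}$, which differ only on $H_m$. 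Write $A\subseteq B$ for $\reach_m(H_0^-)$ under $\xi^{m,B}$ and $\xi^{m,G}$ respectively. With probability $\pG-\pc$ the site $(r_{m-1}+1,m)$ is open only under $\xi^{m,G}$, so that $x^*:=\max B>\max A$. Then, for every $n\ge m$, one has pointwise $\max\reach_{m,n}(B)-\max\reach_{m,n}(A)\ge\max\reach_{m,n}(H_m^{x^*})-\max\reach_{m,n}(H_m^{x^*-2})$, and the $\Pxi$-expectation of the right side is exactly $2$ by translation invariance (Lemma~\ref{lem:gain}). This gives $\EE[\Delta_m r_n]\ge2(\pG-\pc)$. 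A Russo-type differentiation of $a_n=\EE[r_n]$ in $q=1-\delta$ then yields $\mathrm{d}a_n/\mathrm{d}q\ge2(\pG-\pc)n$. The integration constant is $\EE_{\pc}[r_n]\ge0$ for \emph{every} $n$, from Durrett--Griffeath plus the fact that $a_n/n$ converges to its infimum. So the critical model enters only at $q=0$.

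Steps 2--3 have further gaps, which the paper closes with specific devices.

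First, you declare blocks good from their environment and claim they are crossed with $\Pxi$-probability $1-o(1)$. This needs concentration of the edge uniformly over environments. However, the a.s.\ convergence of $r_n/n$ (Liggett's subadditive theorem uses the stationarity of $\xi$), and hence the tilted-box estimate, are only annealed. Moreover, excluding bad runs longer than $C\log L$ still allows a good-line density of order $1/\log L$, so the slope would not be uniform in $L$. The paper instead calls a slab favorable when the $\Pxi$-probability of the two transport crossings is at least $1-\sqrt{\varepsilon_0}$; by Markov's inequality this happens with probability at least $1-\sqrt{\varepsilon_0}$. It then extracts i.i.d.\ retained slabs from this $1$-dependent sequence by Liggett--Schonmann--Stacey. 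The dependence is unavoidable: the boxes must overlap consecutive slabs for paths to meet, so your disjoint-lines independence fails.

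Second, the defects of Hil\'ario--S\'a--Sanchis--Teixeira are not free. An edge at a defect of size $\zeta_n$ must be open with probability $p^{1+\zeta_n}$. So a run of discarded slabs must be climbed at a cost at most exponential in its length, and inside a column of \emph{bounded} width, otherwise the renormalized grid breaks. A single DTT box of height $k$ has width $w_k\to\infty$, so it does not fit; note also that only $\varepsilon k^{2/5}\le w_k\le k^{1-\varepsilon}$ is known, not $\sqrt k$. The paper stacks boxes of fixed sizes $w\times18L$ and $3w\times6L$ into a ladder whose crossing probability is at least $\varepsilon^{2\kappa_n}$ by Harris--FKG. This cost is charged to the edges via $p^J\le\varepsilon^4$. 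With these devices a single scale suffices. The ``gain versus $\sqrt k$ loss'' obstacle you anticipate never arises, since the annealed speed already accounts for all typical bad runs.
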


The answer is thus that the two parameters $\pG$ and $\delta$ are decoupled: any
positive density of good lines suffices, however small the enhancement
$\pG-\pc$. Nevertheless, some enhancement is necessary, since for $\delta=1$ the
model is critical oriented site percolation, which does not
percolate, see~\cite{BezuidenhoutGrimmett1990}.

\begin{remark}
\label{rem:quenched}
As mentioned
in~\cite[Sec.~2]{KestenSidoraviciusVares2022}, the event $\{\xi : \PP(C_{0}\text{ is infinite}\mid\xi)>0\}$ is a tail event of
the i.i.d.\ sequence $\xi$, so that~\eqref{eq:quenched} follows from
$\Theta(\pG,\pc,\delta)>0$ by Kolmogorov's zero--one law. Therefore, it suffices to prove the first assertion of
Theorem~\ref{thm:main}.
\end{remark}

Throughout the paper, $\pB=\pc$ and $\pG>\pc$ are fixed, and it is convenient to realize all the models simultaneously. Let
$(U_{z})_{z\in\Ze}$ be i.i.d.\ uniform variables on $[0,1]$, independent of
$\xi$. Given an environment $\xi$, set
\begin{equation}
  \label{eq:coupling}
  \eta^{\xi}_{z}:=\ind\{U_{z}<p_{\xi_{z_2}}\},
  \qquad
  \eta^{\mathrm{crit}}_{z}:=\ind\{U_{z}<\pc\},
\end{equation}
where $z_2$ is the height of $z$ and $p_{1}=\pB$, $p_{0}=\pG$. Since the
$U_{z}$ are independent of $\xi$, the conditional law of $\eta^{\xi}$ given
$\xi$ is $\Pxi$, and we realize the model by taking $\eta=\eta^{\xi}$.
Moreover,
\begin{equation}
  \label{eq:sandwich}
  \eta^{\mathrm{crit}}\ \le\ \eta^{\xi}
  \qquad\text{for every environment }\xi.
\end{equation}

\subsection{Sketch of the proof}

As noted by de Lima, Ungaretti and
Vares~\cite{LimaUngarettiVares2024}, standard enhancement techniques are not directly available in the oriented setting. Instead, the scheme of our proof is largely inspired by the work of Archer et al.\
\cite[Sec.~5]{ArcherEtAl2026}, with several
adaptations to our setting. They consider oriented site percolation enhanced by edges of length two, which,
on a given line, are either all open or all closed, independently of the other lines, and show that for any positive probability of enhancement the critical
model percolates.

In
Sections~\ref{sec:speeds} and~\ref{sec:strict}, we show that the right edge of
the process travels at a positive speed $\alpha(\delta)$ for every $\delta<1$; each good line
pushes the edge while the critical bad lines do not slow it down. After
fixing geometric conventions in Section~\ref{sec:geometry}, we deduce in
Section~\ref{sec:boxes} that boxes tilted along the directions
$(\pm\alpha(\delta),1)$ are crossed with probability close to $1$, and in
Section~\ref{sec:badtimes} that, by the box-crossing property
of Duminil-Copin,
Tassion and Teixeira~\cite{DuminilCopinTassionTeixeira2018}, rectangular boxes are crossed
at a constant cost uniformly in the environment, hence even during bad times. Finally, in Section~\ref{sec:renorm}, we combine these estimates into a
renormalization scheme which dominates the oriented
percolation with geometric defects of Hilário, Sá,
Sanchis and Teixeira~\cite{HilarioSaSanchisTeixeira2026} and
therefore percolates.

 \section{Edge speeds}
\label{sec:speeds}

We first introduce some notation. For $n\in\Z_{+}$, we identify the line $H_{n}$ with the set of $x$-coordinates
of its sites, $\{x\in\Z : x+n \text{ is even}\}$, and we write
\[
  H_{n}^{-}:=\{x\in H_{n} : x\le 0\},
  \qquad
  H_{n}^{+}:=\{x\in H_{n} : x\ge 0\},
  \qquad
  H_{n}^{b}:=\{x\in H_{n} : x\le b\}.
\]
For $z,z'\in\Ze$, we write $z\to z'$ if there is an open oriented path from $z$
to $z'$. For $m\le n$ and $A\subseteq H_{m}$, we write $(A,m)\to(x,n)$ if
$(a,m)\to(x,n)$ for some $a\in A$, and we define
\[
  \reach_{m,n}(A) := \bigl\{x\in H_{n} : (A,m)\to(x,n)\bigr\},
  \qquad
  \reach_{n}(A):=\reach_{0,n}(A).
\]
Since the initial site of an open path is not required to be open, we have
\begin{equation}
  \label{eq:semigroup}
  \reach_{m,n}(A)=\reach_{k,n}\bigl(\reach_{m,k}(A)\bigr),
  \qquad m\le k\le n.
\end{equation}
The right and left edges of the process started from a half-line are
\[
  r_{n} := \max\reach_{n}(H_{0}^{-}),
  \qquad
  \ell_{n} := \min\reach_{n}(H_{0}^{+}),
  \qquad n\ge 0,
\]
with the conventions $\max\emptyset=-\infty$ and $\min\emptyset=+\infty$. Both
$H_{0}^{-}=\{0,-2,-4,\dots\}$ and $H_{0}^{+}=\{0,2,4,\dots\}$ contain the
origin, so that $r_{0}=\ell_{0}=0$.

We denote by $\mathcal{F}_{m}:=\sigma\bigl((\xi_{i})_{0\le i\le m},\,
(U_{z})_{z\in\Ze,\,z_{2}\le m}\bigr)$ the $\sigma$-field generated by the environment and
the uniform variables up to height $m$. Finally, since an open path started in
$H_{0}^{-}$ reaches height $n$ at an $x$-coordinate at most $n$, we have
$r_{n}\le n$, so that $a_{n}:=\EE[r_{n}]\in[-\infty,n]$ is well defined.

\begin{proposition}[Existence of the edge speeds]
\label{prop:speeds}
For every $\delta\in[0,1]$, there exists $\alpha(\delta)\in[-\infty,1]$ such that
\[
  \frac{r_{n}}{n}\xrightarrow[n\to\infty]{}\alpha(\delta)
  \qquad\text{and}\qquad
  \frac{\ell_{n}}{n}\xrightarrow[n\to\infty]{}-\alpha(\delta)
\]
almost surely under $\mathbb{P}$.
\end{proposition}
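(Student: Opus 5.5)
We prove the statement with Liggett's subadditive ergodic theorem, applied to the family $X_{m,n}:=-\bigl(\text{displacement of the right edge between heights } m \text{ and } n\bigr)$. This is the standard route for the edge of oriented percolation (Durrett), adapted to the annealed law $\PP$, under which everything is stationary because the lines $\xi_i$ are i.i.d.\ and independent of the uniforms $U_z$.

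\textbf{Step 1: superadditivity.} For $m\le n$, define $r_{m,n}:=\max\reach_{m,n}(H_m^{r_m})-r_m$ on $\{r_m>-\infty\}$. This is the displacement of the edge of the process restarted at height $m$ from the half-line to the left of $r_m$.

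By \eqref{eq:semigroup}, $r_n=\max\reach_{m,n}(\reach_m(H_0^-))$. The standard edge argument compares $\reach_m(H_0^-)$ with the full half-line $H_m^{r_m}$. Paths from $H_m^{r_m}$ that reach the rightmost point at height $n$ either start in $\reach_m(H_0^-)$, or they cross a path coming from $H_0^-$; by planarity and orientation, crossing paths can be rerouted. This gives the identity $\max\reach_{m,n}(\reach_m(H_0^-))=\max\reach_{m,n}(H_m^{r_m})$ whenever the former is nonempty. Combined with the trivial inequality for the shifted half-line, it yields
\[
r_n\ \ge\ r_m+r_{m,n}
\]
(we expect equality on survival events), with $r_{0,n}=r_n$.

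One technical point is that the starting site of the restarted process need not be open. This matches the convention that initial sites are not required to be open.

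\textbf{Step 2: stationarity and independence.} Conditionally on $\mathcal F_m$, the shifted family $(r_{m,m+k})_{k\ge1}$ depends only on $(\xi_i)_{i>m}$ and on $(U_z)_{z_2>m}$, translated horizontally by $r_m$. Under $\PP$ these are i.i.d.\ and independent of $\mathcal F_m$, and horizontal translation preserves the law. So $(r_{m,m+k})_k$ has the same law as $(r_k)_k$ and is independent of $\mathcal F_m$. In particular, the increments $(r_{(k-1)j,kj})_{k\ge1}$ are i.i.d., which gives ergodicity, and the whole family satisfies Liggett's stationarity hypotheses. The bound $r_{m,n}\le n-m$ gives the required integrability from above.

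\textbf{Step 3: the main obstacle, $r_n=-\infty$.} When $\delta$ is near $1$, $\EE[r_n]$ may be $-\infty$, and some care is needed. Our first attempt is to show that $r_n>-\infty$ almost surely. Indeed, $H_0^-$ contains infinitely many independent starting points, each producing an open path of length $n$ with probability at least $\pc^n>0$ by the sandwich \eqref{eq:sandwich}. Hence $r_n$ is finite almost surely.

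Next we need $\EE[r_n^-]<\infty$. We bound $\PP(r_n<-k)$ by the probability that all of the roughly $k/2$ starting points in $[-k,0]$ fail to survive $n$ steps, which decays geometrically in $k$. So $a_n$ is finite, and $a_n\ge -Cn$ follows from superadditivity applied with unit steps. Liggett's theorem then gives $r_n/n\to\alpha:=\sup_n a_n/n\in[-C,1]$ almost surely. The general formulation allowing $\alpha(\delta)=-\infty$ would be covered by the extended version of the theorem, but with this argument we expect $\alpha$ to be finite.

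\textbf{Step 4: the left edge.} The statement for $\ell_n$ follows from the reflection $x\mapsto -x$. This map preserves $\Ze$, the line structure, and $\PP$, and it exchanges $H_0^-$ and $H_0^+$. Hence $(-\ell_n)_n$ has the same law as $(r_n)_n$, and the almost sure limit $-\alpha(\delta)$ follows by running the same subadditive argument for $-\ell_n$.
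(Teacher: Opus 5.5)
There is a genuine gap in Step~1. The inequality there goes the wrong way, and your whole application of Liggett's theorem with $X_{m,n}=-r_{m,n}$ rests on it. The identity $\max\reach_{m,n}(\reach_{m}(H_{0}^{-}))=\max\reach_{m,n}(H_{m}^{r_{m}})$ is false, and so is $r_{n}\ge r_{m}+r_{m,n}$. A path issued from a site of $H_{m}^{r_{m}}\setminus\reach_{m}(H_{0}^{-})$ lives at heights $\ge m$, while the path from $H_{0}^{-}$ to $(r_{m},m)$ lives at heights $\le m$, so nothing forces a crossing. Moreover, the paths issued from the points of $\reach_{m}(H_{0}^{-})$ to its right may all die.

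Concretely, take $m=1$, $n\ge3$, and the event that $(1,1)$, $(-2,2)$ and $(h-4,h)$ for $3\le h\le n$ are open, while $(-1,1)$, $(-3,1)$, $(0,2)$, $(2,2)$ are closed. This event has positive probability unless $\delta=0$ and $\pG=1$. On it, $r_{1}=1$ and $\reach_{1}(H_{0}^{-})\subseteq\{1\}\cup\{x\le-5\}$, and $(1,1)$ has no open successor, so $r_{n}\le n-6$. Yet the open path $(-1,1)\to(-2,2)\to(-1,3)\to\cdots\to(n-4,n)$ gives $r_{1}+r_{1,n}\ge n-4$. The planarity argument you have in mind only yields the single-height identity $\reach_{m}(H_{0}^{-})=\reach_{m}(H_{0})\cap H_{m}^{r_{m}}$. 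That identity says nothing about what grows from the sites of $H_{m}^{r_{m}}$ that are not reached at all.

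What does hold is the trivial opposite inequality $r_{n}\le r_{m}+r_{m,n}$. It follows from $\reach_{m}(H_{0}^{-})\subseteq H_{m}^{r_{m}}$, \eqref{eq:semigroup}, and the monotonicity of $\reach_{m,n}$. It is exactly Liggett's subadditivity hypothesis for $X_{m,n}:=r_{m,n}$, without your minus sign, and this is the paper's proof. Your Step~2 then supplies the stationarity and i.i.d.\ hypotheses unchanged. The only integrability needed is $\EE[X_{0,1}^{+}]<\infty$, which is immediate from $r_{1}\le1$, using Liggett's remark that the limit may then be $-\infty$.

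The conclusions of your Step~3 must change accordingly. The limit is $\inf_{n}a_{n}/n$, not $\sup_{n}a_{n}/n$. Also, $a_{n}\ge-Cn$ does not follow, since subadditivity only gives $a_{n}\le na_{1}$. Your geometric-tail argument correctly shows each $a_{n}$ is finite, as in Lemma~\ref{lem:basic}, but this gives no lower bound on $a_{n}/n$. That is why the statement allows $\alpha(\delta)=-\infty$; ruling it out needs a separate argument (Proposition~\ref{prop:strict} in the paper).
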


\begin{proof}
The reflection $(x,n)\mapsto(-x,n)$ preserves $\Ze$ together with its oriented
graph structure and, because the opening probability $p_{\xi_{n}}$ depends only
on the height~$n$, it preserves the law $\Pxi$ for every environment
$\xi$. Since it exchanges $H_{0}^{-}$ and $H_{0}^{+}$, the processes $(\ell_{n})_{n\ge0}$ and $(-r_{n})_{n\ge0}$ have the same law, and it suffices to treat the right edge.

By Lemma~\ref{lem:basic} below, whose proof does not rely on this section, the
set $\reach_{m}(H_{0}^{-})$ is almost surely non-empty, so that $r_{m}>-\infty$
almost surely. For $0\le m\le n$ let $X_{m,n}:=\max\reach_{m,n}\bigl(H_{m}^{r_{m}}\bigr)-r_{m}$.

By definition,
$\reach_{m}(H_{0}^{-})\subseteq H_{m}^{r_{m}}$, therefore
\begin{align}
  r_{n}
  &= \max\reach_{n}(H_{0}^{-}) \notag\\
  &= \max\reach_{m,n}\bigl(\reach_{m}(H_{0}^{-})\bigr)
     && \text{by }\eqref{eq:semigroup} \notag\\
  &\le \max\reach_{m,n}\bigl(H_{m}^{r_{m}}\bigr)
     && \text{since }\reach_{m,n}(\cdot)\text{ is non-decreasing} \notag\\
  &= r_{m}+X_{m,n}.
  \label{eq:X}
\end{align}
For $m=0$, $r_{0}=0$ and $H_0^{r_{0}}=H_{0}^{-}$, so
$X_{0,n}=r_{n}$.

For $b\in H_{m}$ consider the space--time translation $\theta_{b,m}:(x,i)\longmapsto(x+b,\,i+m)$. Since $b\in H_{m}$, $(x+b)+(i+m)$ is even exactly
when $x+i$ is. Therefore the map $\theta_{b,m}$ preserves $\Ze$, and it sends
$H_{0}^{-}$ onto $H_{m}^{b}$. The environment and the uniform variables at heights $>m$ are independent of
$\mathcal{F}_{m}$, and their joint law is invariant under $\theta_{b,m}$, since $\xi$ is
stationary in the time variable and the variables $(U_{z})$ are i.i.d. Moreover, since the first site of an open path is not required to be
open, $\reach_{m,n}$ is a function of the configuration above height $m$. Applying this with the $\mathcal{F}_{m}$-measurable shift $b=r_{m}\in H_{m}$, we obtain that, conditionally on $\mathcal{F}_{m}$,
\[
  (X_{m,m+j})_{j\ge1}\ \overset{\text{law}}{=}\ (r_{j})_{j\ge1}.
\]

For the almost sure convergence, we apply Liggett's subadditive ergodic theorem
\cite[Theorem~1.10]{Liggett1985} to the array $(X_{m,n})$. Hypothesis (1.7) is
exactly~\eqref{eq:X}, since $X_{0,m}=r_{m}$. Hypothesis (1.8) is the above
identification of the conditional law of $(X_{m,m+j})_{j\ge1}$, which does not
depend on $m$. As that conditional law does not depend on the conditioning
either, $X_{m,n}$ is independent of $\mathcal{F}_{m}$. The variables
$(X_{jk,(j+1)k})_{j\ge1}$ are therefore i.i.d., which is (1.9). As Liggett
remarks~\cite[p.~1280]{Liggett1985}, (1.3) can be weakened to
$\EE[X_{0,1}^{+}]<\infty$, at the price of allowing the limit in (1.4) to be
$-\infty$ and of dropping the $L^{1}$ convergence in (1.5). This weaker
assumption holds here since $X_{0,1}=r_{1}\le1$. The processes in (1.9) being
i.i.d., hence ergodic, (1.4) and (1.5) give that $r_{n}/n$ converges almost
surely to the constant
\[
  \alpha(\delta):=\lim_{n\to\infty}\frac{a_{n}}{n}=\inf_{n\ge1}\frac{a_{n}}{n}
  \ \in\ [-\infty,1].
\]
\end{proof}

\section{The strict inequality}
\label{sec:strict}

The main result of this section is the following.
\begin{proposition}[Positivity of the edge speeds]
\label{prop:strict}
For every $\pG>\pc$ and every $\delta\in[0,1]$,
\begin{equation}
  \label{eq:strict}
  \alpha(\delta)\ \ge\ 2\,(\pG-\pc)\,(1-\delta).
\end{equation}
In particular, for every $\delta\in[0,1)$, $\alpha(\delta)>0$.
\end{proposition}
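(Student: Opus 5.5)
The plan is to compare the process in the environment $\xi$ with critical oriented site percolation. We raise the opening probability from $\pc$ to $\pG$ one good line at a time, using the uniform variables of \eqref{eq:coupling}. We show that each such raise increases $\EE[r_n]$ by at least $2(\pG-\pc)$. Since $a_n/n\ge\alpha$ by the $\inf$ characterization in Proposition~\ref{prop:speeds}, averaging over $\xi$ then gives \eqref{eq:strict}, provided the critical edge speed satisfies $\alpha(1)\ge0$.

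The first and main step is an exact decomposition of the edge: for $k\le n$,
\[
  r_n = r_k + X_{k,n}\qquad\text{almost surely,}
\]
where $X_{k,n}$ is defined as in the proof of Proposition~\ref{prop:speeds}. Inequality \eqref{eq:X} gives $\le$, so only $\ge$ is needed. This is the classical planar edge argument of Durrett. Let $x\in H_k^{r_k}$ and consider an open path from $(x,k)$ to $(\max\reach_{k,n}(H_k^{r_k}),n)$. It starts weakly left of the rightmost open path from $H_0^-$ through $(r_k,k)$ and ends weakly right of it. By planarity and orientation the two paths share a site. Concatenating, the endpoint is also reached from $H_0^-$. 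Here we need $\reach_n(H_0^-)\neq\emptyset$, which holds a.s.\ by Lemma~\ref{lem:basic}. I expect this step to be the main obstacle. One must handle the parity and lattice conventions carefully, and one must check that sharing a \emph{site} suffices in site percolation: the shared site is open, since it is not the initial site of the concatenated path, or it lies at height $k$, where it equals $(r_k,k)$. The decomposition matters because $X_{k,n}$ depends only on the configuration above height $k$. It is also where the integrability of $r_n$ is checked: $r_n\ge r_n^{\mathrm{crit}}$ by \eqref{eq:sandwich}, and the critical edge has, e.g., exponential left tails at each fixed $n$, so all expectations are finite.

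Second step, the one-line coupling. Fix $\xi$ and a good line $k\le n$, and compare the configuration where line $k$ uses threshold $\pc$ with the one where it uses $\pG$. All other lines are fixed at arbitrary intermediate values. Changing line $k$ only opens additional sites, so $r_n$ does not decrease, by monotonicity. Let $y=r_{k-1}+1$, which is $\mathcal{F}_{k-1}$-measurable. On the event $\{\pc\le U_{(y,k)}<\pG\}$, which is independent of $\mathcal{F}_{k-1}$ and has probability $\pG-\pc$, the site $(y,k)$ is closed before and open after. Closed before means $r_k\le r_{k-1}-1$; open after means $r_k=r_{k-1}+1$. So $r_k$ increases by at least $2$, while $r_{k-1}$ and $X_{k,n}$ are unchanged. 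By the decomposition, $r_n$ increases by at least $2$. Taking $\Exi$, each good line $k\le n$ raises $\Exi[r_n]$ by at least $2(\pG-\pc)$. Raising the lines one at a time, starting from $\eta^{\mathrm{crit}}$, gives
\[
  \Exi[r_n]\ \ge\ \EE[r_n^{\mathrm{crit}}]+2(\pG-\pc)\,\#\{1\le k\le n:\xi_k=0\}.
\]

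Third step, conclusion. Averaging over $\xi$ gives $a_n(\delta)\ge a_n(1)+2(\pG-\pc)(1-\delta)n$. Proposition~\ref{prop:speeds} at $\delta=1$ gives $a_n(1)\ge n\,\alpha(1)$. It remains to show $\alpha(1)\ge0$, and here I would invoke the classical fact that the edge speed of critical oriented site percolation is $0$, equivalently that $\alpha(p)>0$ iff $p>\pc$ (Durrett; Bezuidenhout--Grimmett~\cite{BezuidenhoutGrimmett1990}). Only $\alpha(1)\ge 0$ is actually needed here. This follows because for $p>\pc$ the speed is positive, and $r_n/n$ is monotone in $p$ with a right-continuous limit (a standard result). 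Dividing by $n$ and letting $n\to\infty$ gives \eqref{eq:strict}, and positivity for $\delta<1$ is immediate.
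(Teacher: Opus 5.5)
\textbf{There is a genuine gap: your first step is false, and your second step rests on it.}

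The identity $r_n=r_k+X_{k,n}$ fails with positive probability. Take $k=1$, $n=2$. Let $(1,1)$ and $(-2,2)$ be open, and let $(-1,1)$, $(-3,1)$, $(0,2)$, $(2,2)$ be closed.
\begin{itemize}
\item Then $r_1=1$, and all other points of $\reach_1(H_0^-)$ are at most $-5$. Since $(0,2)$ and $(2,2)$ are closed, $r_2\le-4$.
\item But $-1\in H_1^{r_1}$, and $(-1,1)\to(-2,2)$ is an open path, since its initial site need not be open. So $r_1+X_{1,2}\ge-2>r_2$.
\end{itemize}
The flaw in your planarity argument is that the open path from $H_0^-$ through $(r_k,k)$ need not survive up to height $n$. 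Meanwhile $H_k^{r_k}$ contains sites not reached from $H_0^-$, and their paths may end to the right of everything reached from $H_0^-$. Durrett's edge argument compares the processes started from $H_0^-$ and from $H_0$ at the same time. It does not let you restart from the full half-line $H_k^{r_k}$; only the inequality~\eqref{eq:X} survives. Note also that $X_{k,n}$ is measured from $r_k$, so it is not unchanged when $r_k$ moves. At most its conditional law given $\mathcal{F}_k$ is unchanged.

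Consequently, your argument does not carry the gain of $2$ at height $k$ on the pivot event up to height $n$. Pathwise the claimed gain is simply false, since the newly opened site may have no open successor. What you need is a gain in expectation that uses only monotonicity; this is the paper's Lemma~\ref{lem:gain}. Write $B_k\subseteq G_k$ for $\reach_k(H_0^-)$ before and after raising the line, and set $x^*:=\max G_k>\max B_k$. Then $B_k\subseteq G_k\setminus\{x^*\}\subseteq H_k^{x^*-2}$, hence
\[
  \max\reach_{k,n}(G_k)-\max\reach_{k,n}(B_k)\ \ge\ \max\reach_{k,n}\bigl(H_k^{x^*}\bigr)-\max\reach_{k,n}\bigl(H_k^{x^*-2}\bigr).
\]
Conditionally on everything up to height $k$, the right-hand side has expectation exactly $2$, by invariance of $\Pxi$ under $x\mapsto x-2$.

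With this in place of your first step, the rest of your plan goes through:
\begin{itemize}
\item Telescoping over the good lines for a fixed environment is a valid substitute for the paper's Russo-type differentiation in $q$.
\item Your derivation of $\alpha(1)\ge0$ from positivity of the speed above $\pc$ and right-continuity is an acceptable replacement for quoting Durrett--Griffeath.
\end{itemize}
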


We first state three lemmas used to prove Proposition~\ref{prop:strict}.
\begin{lemma}
\label{lem:basic}
Let $\xi$ be an environment, let $m\le n$, and let $A\subseteq H_{m}$ be
infinite and not bounded from below. Then $\Pxi(\reach_{m,n}(A)=\emptyset)=0$
and, for every integer $k\ge0$ and every $b\in H_{m}$,
\[
  \Pxi\bigl(\max\reach_{m,n}(H_{m}^{b})\le b-2k\bigr)
  \ \le\ \bigl(1-\pc^{\,n-m}\bigr)^{k}.
\]
In particular, $\Exi|\max\reach_{m,n}(H_{m}^{b})|<\infty$ and $\Exi|r_{n}|<\infty$.
\end{lemma}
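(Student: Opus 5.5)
The plan is to use the monotone coupling~\eqref{eq:sandwich}: every site is open with probability at least $\pc$ under $\Pxi$, so it suffices to exhibit, for each starting point, one explicit path whose sites are all open. For $a\in A$, consider the vertical zigzag path from $(a,m)$ that alternates NE and NW steps and ends at height $n$ with $x$-coordinate $a$ or $a+1$. It has $n-m$ sites after the initial one, and all of them are open with probability at least $\pc^{\,n-m}>0$ (if $\pc<1$, which holds; note $\pc>0$ as well). Paths started from points of $A$ whose $x$-coordinates differ by at least $2(n-m)+2$ use disjoint sets of sites. Since the occupation variables are conditionally independent given $\xi$, these events are independent.

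For the first assertion, $A$ is infinite and unbounded from below, so I pick an infinite sequence $a_1>a_2>\dots$ in $A$ with gaps at least $2(n-m)+2$. The events that the zigzag from $a_j$ is fully open are then independent under $\Pxi$, each with probability at least $\pc^{\,n-m}$. Hence
\[
  \Pxi\bigl(\reach_{m,n}(A)=\emptyset\bigr)\le \lim_{k\to\infty}\bigl(1-\pc^{\,n-m}\bigr)^{k}=0 .
\]

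For the quantitative bound, I use the $k$ starting points $b, b-2, \dots, b-2(k-1)$ in $H_m^b$. The zigzag from $b-2j$ ends at height $n$ at a position $\ge b-2j>b-2k$. If any one of them is open, then $\max\reach_{m,n}(H_m^b)>b-2k$. The difficulty is that these zigzags overlap, so independence fails. I will fix this by using \emph{parallel} paths instead: take the straight NE-diagonal from each starting point $b-2j$. These are disjoint diagonals, and the path from $b-2j$ ends at $b-2j+(n-m)\ge b-2k+2$. Distinct diagonals share no sites, so the $k$ events are independent, each with probability $\ge \pc^{\,n-m}$, giving $(1-\pc^{\,n-m})^k$. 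Choosing disjoint paths is the key point, and the diagonal choice resolves it cleanly.

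For integrability, $\max\reach_{m,n}(H_m^b)\le b+(n-m)$ deterministically. The geometric tail bound gives a finite expectation of the negative part, since $\sum_k(1-\pc^{\,n-m})^k<\infty$, and on the empty event the maximum is $-\infty$ only with probability zero. Taking $m=0$ and $b=0$ gives $\Exi|r_n|<\infty$.
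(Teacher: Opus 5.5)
Your proof is correct and is essentially the paper's argument. The paper uses the disjoint ascending diagonals $\{(x+i,m+i):1\le i\le n-m\}$, each fully open with conditional probability at least $\pc^{\,n-m}$ and mutually independent under $\Pxi$, to get both the almost-sure non-emptiness and the geometric left tail; the deterministic bound $\max\reach_{m,n}(H_m^b)\le b+(n-m)$ then gives integrability.

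The zigzag detour in your first part is unnecessary, since the paper uses diagonals for that assertion too. Your remark that zigzags from $b,b-2,\dots$ overlap is also inaccurate: they occupy the disjoint column pairs $\{a,a+1\}$. Neither point affects the validity of your argument.
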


\begin{proof}
For $x\in H_{m}$, let $D(x):=\{(x+i,m+i):1\le i\le n-m\}$ be the ascending
diagonal issued from $(x,m)$. Diagonals issued from distinct points are disjoint and each is
entirely open with probability at least $\pc^{\,n-m}$ because
$p_{\xi_{i}}\ge\pc$. If $x\in A$ and $D(x)$ is open then
$x+(n-m)\in\reach_{m,n}(A)$.

Since $A$ is infinite and not bounded from below, we may pick
$x_{0}>x_{1}>\dots$ in $A$. The diagonals $D(x_{0}),D(x_{1}),\dots$ being
disjoint, the corresponding events are conditionally independent given $\xi$, so
that the probability that none of $D(x_{0}),\dots,D(x_{j-1})$ is open is at most
$(1-\pc^{\,n-m})^{j}$. Letting $j\to\infty$ shows that
$\reach_{m,n}(A)\ne\emptyset$ almost surely.

The same argument applied to the $k$ points $b,b-2,\dots,b-2(k-1)$ of
$H_{m}^{b}$ gives that if $D(b-2i)$ is open for some
$0\le i\le k-1$, then $b-2i+(n-m)\in\reach_{m,n}(H_{m}^{b})$, thus
\[
  \max\reach_{m,n}(H_{m}^{b})\ \ge\ b-2(k-1)+(n-m)\ >\ b-2k ,
\]
so that the event $\{\max\reach_{m,n}(H_{m}^{b})\le b-2k\}$ forces all $k$
diagonals to be closed.

Finally, $\max\reach_{m,n}(H_{m}^{b})\le b+(n-m)$, since the $x$-coordinate increases
by at most $1$ at each step, and the above bound gives an exponential tail on
the left, thus $\Exi|\max\reach_{m,n}(H_{m}^{b})|<\infty$, uniformly in $\xi$.
\end{proof}

\begin{lemma}
\label{lem:gain}
Let $\xi$ be an environment, let $m\le n$, and let $A\subseteq B\subseteq H_{m}$
with $A$ infinite and not bounded from below and such that
$x^{*}:=\max B<\infty$ satisfies $x^{*}>\max A$. Then
\[
  \Exi\bigl[\max\reach_{m,n}(B)-\max\reach_{m,n}(A)\bigr]\ \ge\ 2 .
\]
\end{lemma}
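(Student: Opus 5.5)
The plan is to reduce to $B=A\cup\{x^{*}\}$ and then prove the bound by a coupling that uses horizontal translation invariance. First, $\reach_{m,n}(\cdot)$ is non-decreasing, so $\max\reach_{m,n}(B)\ge\max\reach_{m,n}(A\cup\{x^{*}\})$. Hence it suffices to treat $B=A\cup\{x^{*}\}$. By parity, $x^{*}\ge a+2$ where $a:=\max A$. By Lemma~\ref{lem:basic} both maxima are a.s.\ finite and integrable, uniformly in $\xi$, so the expectation in the statement is well defined and equals $\Exi[R(B)]-\Exi[R(A)]$, where $R(S):=\max\reach_{m,n}(S)$.

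Next I would use translation invariance. For a fixed environment $\xi$, the law $\Pxi$ is invariant under $(x,i)\mapsto(x+2,i)$, since opening probabilities depend only on the height. Therefore $\Exi[R(A+2)]=\Exi[R(A)]+2$. It then suffices to construct a coupling, that is, a measure-preserving bijection $\Phi$ of the configuration above height $m$, such that
\[
  R(A\cup\{x^{*}\})(\omega)\ \ge\ R(A+2)(\Phi\omega)\qquad\text{for every configuration }\omega .
\]
Taking $\Exi$ of this inequality gives the claim.

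To build $\Phi$, I would explore the rightmost open path $\gamma=(\gamma_{k})_{m\le k\le n}$ of $\reach_{m,\cdot}(A)$ level by level. Here $\gamma_{k}=\max\reach_{m,k}(A)$. The event $\{\gamma_{m+1}=y_{1},\dots,\gamma_{k}=y_{k}\}$ depends only on sites $(x,m+j)$ with $x\le y_{j-1}+1$. So, conditionally on the exploration, the sites strictly to the right of $\gamma_{j-1}+1$ at each level $m+j$ are still i.i.d.\ with the environment-dependent law. Define $\Phi$ to act as the shift by $2$ on the explored region and as the identity (after relabelling) on the unexplored region to the right. In $\Phi\omega$ the rightmost path of $A+2$ is then exactly $\gamma+2$. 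At each level, the sites strictly to the right of $\gamma+2$ in $\Phi\omega$ are sites strictly to the right of $\gamma+1$ in $\omega$. An open path from $x^{*}\ge a+2$ in $\omega$ that stays to the right of $\gamma$ therefore reaches at least as far as the corresponding path in $\Phi\omega$ that stays to the right of $\gamma+2$. A path that meets $\gamma$ can be continued along $\gamma$ in $\omega$.

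The main obstacle is making $\Phi$ well defined and measure preserving in a way that is consistent across levels. I expect two difficulties. The explored region in $\omega$ is bounded by $\gamma+1$ while in $\Phi\omega$ it is bounded by $\gamma+3$, so the single column of sites $\gamma_{k}+2$ must be handled separately. Also, the comparison must be made at every level simultaneously so that the rightmost $B$-path in $\omega$ dominates the rightmost $(A+2)$-path in $\Phi\omega$. The first thing I would try is an inductive, level-by-level coupling: at step $k$, given that $\max\reach_{m,k}(B)(\omega)\ge\gamma_{k}(\omega)+2=\max\reach_{m,k}(A+2)(\Phi\omega)$ whenever the former exceeds $\gamma_{k}$, I would swap the uniform variables at $(\gamma_{k}+1,k+1)$ and $(\gamma_{k}+3,k+1)$. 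Both sites lie on the same line, hence have the same law under $\Pxi$. I would then check case by case that the domination propagates to level $k+1$. If this case analysis fails, a fallback is to prove the weaker statement that $R(A\cup\{x^{*}\})-R(A)$ is at least $2$ on an event of conditional probability bounded below. The exact constant $2$ is presumably what the proof of Proposition~\ref{prop:strict} needs, though.
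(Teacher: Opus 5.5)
\paragraph{The gap.} Your whole argument rests on the coupling $\Phi$, and it is never constructed. A measure-preserving $\Phi$ with $R(A\cup\{x^{*}\})(\omega)\ge R(A+2)(\Phi\omega)$ would give stochastic domination of $\max\reach_{m,n}(A\cup\{x^{*}\})$ over $\max\reach_{m,n}(A)+2$. That is a much stronger statement than the bound in expectation you need, and the construction you sketch cannot produce it.

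\begin{itemize}
\item If $\Phi$ shifts the explored region by $2$, then, as you note, the rightmost path of $A+2$ in $\Phi\omega$ is $\gamma+2$. Hence $R(A+2)(\Phi\omega)=R(A)(\omega)+2$ for every $\omega$, and you would need $R(A\cup\{x^{*}\})\ge R(A)+2$ pathwise.
\item That pathwise bound fails when $n>m$ and both sites $(x^{*}\pm1,m+1)$ are closed, which has positive probability when $H_{m+1}$ is bad. On that event $\reach_{m,n}(A\cup\{x^{*}\})=\reach_{m,n}(A)$.
\item Paths from $x^{*}$ staying to the right of $\gamma+2$ play no role in $R(A+2)(\Phi\omega)$. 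The adaptive swap at $(\gamma_{k}+1,k+1)$, $(\gamma_{k}+3,k+1)$ is left unanalysed.
\item The fallback does not help. The event $\max\reach_{m,n}(A\cup\{x^{*}\})>\max\reach_{m,n}(A)$ forces the cluster of $(x^{*},m)$ to reach height $n$. When the lines in between are critical, this has probability tending to $0$ as $n-m\to\infty$, so you get no lower bound uniform in $n-m$.
\end{itemize}

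Two smaller points. First, $(\max\reach_{m,k}(A))_{k}$ is not a path in general, since it can drop by more than $1$ between consecutive heights. Second, Lemma~\ref{lem:basic} gives integrability only for half-lines $H_{m}^{b}$. For $A$ with super-exponentially spaced points, $\max\reach_{m,n}(A)$ is not integrable. So writing the expectation as $\Exi[R(B)]-\Exi[R(A)]$ and using $\Exi[R(A+2)]=\Exi[R(A)]+2$ is not justified.

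\paragraph{The missing idea.} Do not translate $A$; enlarge it to the half-line $H_{m}^{x^{*}-2}$, for which adding $x^{*}$ is exactly a translation by $2$. Two facts make this work. First, $\reach_{m,n}(B)=\reach_{m,n}(\{x^{*}\})\cup\reach_{m,n}(B\setminus\{x^{*}\})$. Second, $v\mapsto\max(u,v)-v=(u-v)^{+}$ is non-increasing. Together with the inclusions $A\subseteq B\setminus\{x^{*}\}\subseteq H_{m}^{x^{*}-2}$, they give, pathwise,
\[
  \max\reach_{m,n}(B)-\max\reach_{m,n}(A)
  \ \ge\ \bigl(\max\reach_{m,n}(\{x^{*}\})-\max\reach_{m,n}(H_{m}^{x^{*}-2})\bigr)^{+}
  \ =\ \max\reach_{m,n}(H_{m}^{x^{*}})-\max\reach_{m,n}(H_{m}^{x^{*}-2}).
\]
The right-hand side involves only half-lines, so it is integrable by Lemma~\ref{lem:basic}. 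Its expectation is exactly $2$ by invariance of $\Pxi$ under $x\mapsto x-2$. The left-hand side is non-negative, so its expectation is well defined and at least $2$. No coupling is needed; this is the paper's proof.
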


\begin{proof}
By Lemma~\ref{lem:basic} applied to $A$, the set $\reach_{m,n}(A)$ is almost
surely non-empty, and the same holds for $\reach_{m,n}(B)\supseteq\reach_{m,n}(A)$. Therefore, both maxima are almost surely finite, being bounded above by
$x^{*}+(n-m)$, and the differences below are well defined.
Since $\max A<x^{*}$ we have $A\subseteq B\setminus\{x^{*}\}\subseteq
H_{m}^{x^{*}-2}$, and therefore
\begin{align*}
  \max\reach_{m,n}(B)-\max\reach_{m,n}(A)
  &\ \ge\ \max\reach_{m,n}(B)-\max\reach_{m,n}\bigl(B\setminus\{x^{*}\}\bigr)\\
  &\ =\ \Bigl(\max\reach_{m,n}(\{x^{*}\})
        -\max\reach_{m,n}\bigl(B\setminus\{x^{*}\}\bigr)\Bigr)^{+}\\
  &\ \ge\ \Bigl(\max\reach_{m,n}(\{x^{*}\})-\max\reach_{m,n}(H_{m}^{x^{*}-2})\Bigr)^{+}\\
  &\ =\ \max\reach_{m,n}(H_{m}^{x^{*}})-\max\reach_{m,n}(H_{m}^{x^{*}-2}),
\end{align*}
using $\reach_{m,n}(B)=\reach_{m,n}(\{x^{*}\})\cup\reach_{m,n}(B\setminus\{x^{*}\})$
for the first equality and $H_{m}^{x^{*}}=H_{m}^{x^{*}-2}\cup\{x^{*}\}$ for the last one.
Since $H_{m}^{x^{*}-2}=H_{m}^{x^{*}}-2$ and $\Pxi$ is invariant under the horizontal
translation $x\mapsto x-2$, we have
\[
  \max\reach_{m,n}(H_{m}^{x^{*}-2})
  \ \overset{\text{law}}{=}\ \max\reach_{m,n}(H_{m}^{x^{*}})-2 ,
\]
and both terms being integrable by Lemma~\ref{lem:basic}, we can take expectations
and obtain the desired result.
\end{proof}

We now want to turn this result into a gain per good line. Fix $n\ge1$. For
$1\le m\le n$, let $\xi^{m,G}$ (resp. $\xi^{m,B}$) be the environment obtained
from $\xi$ by forcing the line $H_{m}$ to be good (resp. bad), the other lines
being unchanged, and let $\Delta_{m}r_{n}:=r_{n}\bigl(\xi^{m,G}\bigr)-r_{n}\bigl(\xi^{m,B}\bigr)$,
both using the same variables $(U_{z})$. By monotonicity of
the coupling~\eqref{eq:coupling}, $\Delta_{m}r_{n}\ge0$.

\begin{lemma}
\label{lem:line}
For every $1\le m\le n$ and every $\delta\in[0,1]$,
\[
\EE[\Delta_{m}r_{n}]\ge2(\pG-\pc).
\]
\end{lemma}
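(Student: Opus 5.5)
We condition on $\mathcal{F}_{m-1}$ together with the environment above height $m$, and compare the two environments $\xi^{m,G}$ and $\xi^{m,B}$, which differ only on the line $H_{m}$. Below height $m$ they coincide, so $R:=\reach_{m-1}(H_{0}^{-})$ is the same set for both; it is $\mathcal{F}_{m-1}$-measurable, non-empty, bounded above by $r_{m-1}$, and infinite and unbounded from below, since it contains the points reached along the disjoint diagonals from $H_{0}^{-}$, as in Lemma~\ref{lem:basic}. On line $H_{m}$, let $S^{G}$ and $S^{B}$ be the sets of sites of $H_{m}$ that are neighbours of $R$ and satisfy $U_{z}<\pG$, respectively $U_{z}<\pc$. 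Then $S^{B}\subseteq S^{G}$. By~\eqref{eq:semigroup}, $r_{n}(\xi^{m,\cdot})=\max\reach_{m,n}(S^{\cdot})$, where $\reach_{m,n}$ uses the common environment above height $m$. That environment is independent of the variables $U_{z}$ at heights $\le m$.

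Set $x_{0}:=\max R$; it is $\mathcal{F}_{m-1}$-measurable. The site $x_{0}+1\in H_{m}$ is the rightmost possible neighbour of $R$, and its only lower neighbour in $R$ is $x_{0}$. Since $U_{x_{0}+1,m}$ is independent of $\mathcal{F}_{m-1}$, the event $E:=\{\pc\le U_{(x_{0}+1,m)}<\pG\}$ has conditional probability $\pG-\pc$. On $E$ we have $\max S^{G}=x_{0}+1>\max S^{B}$, since $x_{0}+1\notin S^{B}$ and every other element of $S^{B}$ is at most $x_{0}-1$. The set $S^{B}$ is infinite and unbounded from below almost surely: the points of $R$ are unbounded below, and their right neighbours are independently open with probability $\pc>0$. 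Hence on $E$ we may apply Lemma~\ref{lem:gain}, with $A=S^{B}$ and $B=S^{G}$, to the configuration above height $m$, conditionally on everything up to height $m$. This gives a conditional expectation of $\Delta_{m}r_{n}$ of at least $2$ on $E$. Off $E$, the pointwise bound $\Delta_{m}r_{n}\ge0$ suffices.

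Taking expectations then gives $\EE[\Delta_{m}r_{n}]\ge 2\,\PP(E)=2(\pG-\pc)$, uniformly in $\delta$. The case $m=n$ is covered as well, since Lemma~\ref{lem:gain} with $n=m$ is trivial: the reached set is the set itself.

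The main point to check is the conditioning step. Lemma~\ref{lem:gain} is stated for deterministic sets $A\subseteq B$ under $\Pxi$, so we need that, given $\mathcal{F}_{m}$ and $\xi$, the variables $(U_{z})_{z_{2}>m}$ are still i.i.d.\ uniform, and that $S^{G}$ and $S^{B}$ are measurable with respect to this conditioning. Both hold because $\reach_{m,n}$ depends only on the configuration strictly above height $m$. One subtlety is that the hypothesis ``$A$ infinite and unbounded below'' must hold almost surely. This is handled by the Borel--Cantelli argument over the infinitely many points of $R$, using that distinct points of $R$ have distinct right neighbours in $H_{m}$, whose variables are independent.
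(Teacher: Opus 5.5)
Your proposal is correct and is essentially the paper's proof. In both, the pivotal site $(r_{m-1}+1,m)$ with $\pc\le U<\pG$ gives $\max S^{G}>\max S^{B}$ with conditional probability $\pG-\pc$, and Lemma~\ref{lem:gain}, applied conditionally on the configuration up to height $m$ and the environment off $H_{m}$, turns this into an expected gain of $2$. The only difference is minor: you show $S^{B}$ is unbounded below by a Borel--Cantelli argument on the right neighbours of $\reach_{m-1}(H_{0}^{-})$, whereas the paper applies Lemma~\ref{lem:basic} to the sets $\{x\le -2k\}$.
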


\begin{proof}
Let $\widehat{\mathcal{F}}_{m}:=\sigma\bigl((\xi_{i})_{i\ne m},\,(U_{z})_{z\in\Ze,\,z_{2}\le m}\bigr),$ and let $G_{m}$ and $B_{m}$ be the sets $\reach_{m}(H_{0}^{-})$
computed under $\xi^{m,G}$ and $\xi^{m,B}$ respectively. The two environments
differ only on $H_{m}$, so the occupation variables are the same at every height
but $m$. In particular, $r_{m-1}$ is identical for both.

Let $v:=(r_{m-1}+1,m)$. On $\{\pc\le U_{v}<\pG\}$ the site $v$ is open under
$\xi^{m,G}$ and closed under $\xi^{m,B}$. Thus, on this event $\max G_{m}\ge r_{m-1}+1$ and $\max B_{m}\le r_{m-1}-1$, so $\max G_{m}> \max B_{m}$.
Conditionally on $\mathcal{F}_{m-1}$ the site $v$ is deterministic, and $U_{v}$
is at height $m$, so it is independent of $\mathcal{F}_{m-1}$. Thus,
$\PP(\pc\le U_{v}<\pG\mid\mathcal{F}_{m-1})=\pG-\pc$ almost surely, and
\begin{equation}
  \label{eq:pivot}
  \PP\bigl(\max G_{m}>\max B_{m}\bigr)\ \ge\ \pG-\pc .
\end{equation}

For every $k$, applying Lemma~\ref{lem:basic} to $\{x\in H_{0}:x\le-2k\}$ shows
that $B_{m}$ almost surely contains a point at $x$-coordinate at most $-2k+m$,
since the $x$-coordinate increases by at most $1$ per step. Intersecting over
$k$, the sets $B_{m}\subseteq G_{m}$ are almost surely infinite and not bounded
from below, while $\max G_{m}\le r_{m-1}+1<\infty$. Since the first site of an
open path is not required to be open, $\reach_{m,n}$ depends only on heights
$m+1,\dots,n$, where the two occupation configurations are the same. Therefore, it is the
same map in both configurations and, by~\eqref{eq:semigroup}, $\Delta_{m}r_{n}=\max\reach_{m,n}(G_{m})-\max\reach_{m,n}(B_{m})$.

Conditionally on $\widehat{\mathcal{F}}_{m}$, the sets $B_{m}\subseteq G_{m}$
are fixed, the environment above height $m$ is fixed, and $\reach_{m,n}$ is
generated by the variables $U_{z}$ at heights $>m$, so that its conditional law
is $\Pxi$. Applying Lemma~\ref{lem:gain} conditionally on
$\widehat{\mathcal{F}}_{m}$, and using $\Delta_{m}r_{n}\ge0$ on the
complementary event,
\[
  \EE\bigl[\Delta_{m}r_{n}\mid\widehat{\mathcal{F}}_{m}\bigr]
  \ \ge\ 2\cdot\ind\{\max G_{m}>\max B_{m}\}.
\]
Taking expectations and using~\eqref{eq:pivot} gives the claim.
\end{proof}

\begin{proof}[Proof of Proposition~\ref{prop:strict}]
Write $q:=1-\delta$ for the density of good lines and see $a_{n}$ as a
function of $q$. The edge $r_{n}$ depends on the environment only through
$(\xi_{1},\dots,\xi_{n})$, so for $w\in\{0,1\}^{n}$ we write
$\Exi[r_{n}]=\EE^{w}[r_{n}]$ whenever $\xi$ agrees with $w$ on the first $n$
lines. Let $\mu_{q}$ be the law of a single line, $\mu_{q}(0)=q$ and
$\mu_{q}(1)=1-q$, so that
\[
  a_{n}=\sum_{w\in\{0,1\}^{n}}\ \prod_{i=1}^{n}\mu_{q}(w_{i})\ \EE^{w}[r_{n}].
\]
This is a polynomial in $q$. Indeed, the sum is finite, and the coefficients
$\EE^{w}[r_{n}]$ are finite by Lemma~\ref{lem:basic} and do not depend on $q$.
Since $\mathrm{d}\mu_{q}(0)/\mathrm{d}q=1$ and
$\mathrm{d}\mu_{q}(1)/\mathrm{d}q=-1$, the product rule gives
\[
  \frac{\mathrm{d}a_{n}}{\mathrm{d}q}
  =\sum_{m=1}^{n}\ \sum_{w\in\{0,1\}^{n}}
    \varepsilon(w_{m})\prod_{i\ne m}\mu_{q}(w_{i})\ \EE^{w}[r_{n}],
  \qquad \varepsilon(0)=1,\ \varepsilon(1)=-1 .
\]
For fixed $m$, the weight $\prod_{i\ne m}\mu_{q}(w_{i})$ does not depend on
$w_{m}$. It only involves the status of the lines other than $H_{m}$. Pairing
each environment $w$ with the one obtained by changing the status of $H_{m}$,
and using $\varepsilon(0)=1$, $\varepsilon(1)=-1$, the inner sum becomes
\[
  \sum_{v}\ \prod_{i\ne m}\mu_{q}(v_{i})
  \Bigl(\EE^{\xi^{m,G}}[r_{n}]-\EE^{\xi^{m,B}}[r_{n}]\Bigr),
\]
where $v$ runs over the $2^{n-1}$ environments of the lines
$H_{i}$, $i\le n$, $i\ne m$, and $\xi$ is any environment agreeing with $v$ off
$m$. Since $\xi^{m,G}$ and $\xi^{m,B}$ do not depend on $\xi_{m}$, this is
exactly $\EE[\Delta_{m}r_{n}]$. Notice that we recover Russo's formula. Moreover, Lemma~\ref{lem:line} gives
\[
  \frac{\mathrm{d}a_{n}}{\mathrm{d}q}
  =\sum_{m=1}^{n}\EE\bigl[\Delta_{m}r_{n}\bigr]
  \ \ge\ 2\,(\pG-\pc)\,n
  \qquad\text{for every } q\in[0,1].
\]
It remains to determine the constant of integration. At $q=0$ every line is bad,
so that $a_{n}=\EE_{\pc}[r_{n}]$, the homogeneous critical model, whose edge
speed vanishes, $\lim_{n\to\infty}\EE_{\pc}[r_{n}]/n=0$,
by the classical result of Durrett and Griffeath~\cite{DurrettGriffeath1983}. By
Proposition~\ref{prop:speeds} this limit is also an infimum, so that
$\EE_{\pc}[r_{n}]\ge0$ for every $n\ge1$. Integrating therefore gives
$a_{n}\ge2(\pG-\pc)\,q\,n$ for every $n\ge1$, and taking the infimum over $n$
gives~\eqref{eq:strict}.
\end{proof}

\begin{remark}
The factor 2 in the lower bound of $\alpha(\delta)$ in Proposition~\ref{prop:strict} may be surprising since $\alpha(\delta) \leq 1$, but it is in fact not a contradiction. In the classical homogeneous oriented percolation model on $\Ze$, there are exactly $2^n$ oriented paths of length $n$ issued from a given site and the $n$ sites of a given path are distinct and independently open with probability $p$. Therefore, $\PP_p(\exists$ an open oriented path of length $n$ from the origin$)\le (2p)^n$, so that $\pc\ge 1/2$. Hence, $2(\pG-\pc)\le 1$, and~\eqref{eq:strict} is consistent with
$\alpha(\delta)\le 1$.
\end{remark}

\begin{remark}
\label{rem:why-critical}
In Sections~\ref{sec:speeds} and~\ref{sec:strict}, the criticality of the bad lines is used only through the constant of
integration. Indeed, Lemmas~\ref{lem:basic},~\ref{lem:gain} and~\ref{lem:line}
use the standing assumption $\pB=\pc$, $\pG>\pc$ only through $p_{\xi_{i}}\ge p_{B}>0$: replacing $\pc$
by an arbitrary $p_{B}\in(0,\pG)$ in Lemma~\ref{lem:basic}, and the
event of Lemma~\ref{lem:line} by $\{p_{B}\le U_{v}<\pG\}$, the same argument
gives
\[
  \frac{\mathrm{d}a_{n}}{\mathrm{d}q}\ \ge\ 2\,(\pG-p_{B})\,n
  \qquad\text{and hence}\qquad
  \alpha(\delta)\ \ge\ \alpha_{\mathrm{hom}}(p_{B})+2\,(\pG-p_{B})(1-\delta),
\]
where $\alpha_{\mathrm{hom}}(p)$ denotes the right edge speed of the
homogeneous model with parameter $p$. Proposition~\ref{prop:strict} is the case
$p_{B}=\pc$, where $\alpha_{\mathrm{hom}}(\pc)=0$ by~\cite{DurrettGriffeath1983}.

For $p_{B}<\pc$, $\alpha_{\mathrm{hom}}(p_{B})=-\infty$ (see~\cite[Sec.~7]{Durrett1984}) and the estimate is trivial for every
$\delta<1$. This is why percolation for $p_{B}<\pc$ and $\delta$ small
in~\cite{KestenSidoraviciusVares2022} requires different arguments. In this sense,
$\pc$ is the smallest value of $p_{B}$ for which the present arguments say
something, and the only one for which the conclusion is not already trivial,
since for $p_{B}>\pc$ percolation holds even at $\delta=1$.
\end{remark}

\section{Geometric conventions}
\label{sec:geometry}
Before resuming the proof, we introduce the geometric objects used below.
For $A,B,C\subseteq\Ze$, let $A\to[B]\,C$ denote the event that some site of $A$
is connected to some site of $C$ by an open oriented path contained in $B$. Such an oriented path is called a \emph{crossing path}.
Given $u,v\in\R^{2}$ with $\det(u,v)>0$, write
\[
  R(u,v):=\bigl([0,1)u+[0,1)v\bigr)\cap\Ze
\]
for the parallelogram generated by $u$ and $v$. All the parallelograms below have a horizontal generator $u=(a,0)$ with $a\ge2$ giving the width of the parallelogram
and a generator $v=(b,c)$ with $c \in \N^*$ giving the length, which is the number of lines the parallelogram meets, and
$b\in\R$ giving the tilt of the parallelogram. Translations of $R(u,v)$ by
vectors of $\R^{2}$ are used below with the convention that the intersection
with $\Ze$ is taken after the translation: for $z\in\R^{2}$, $z+R(u,v)$ stands
for $\bigl(z+[0,1)u+[0,1)v\bigr)\cap\Ze$. We call \emph{box} any such translate $z+R(u,v)$, $z$ giving its position, and
\emph{rectangular box} a box with no tilt, $b=0$.

For such a box $R$, we write $R_{h}:=\{x\in\Z:(x,h)\in R\}$ for every height $h$ of $R$, and $\partial^{\downarrow}R$ (resp. $\partial^{\uparrow}R$)
for the set of sites of its lowest (resp. highest) line. We also write
\[
  \partial^{\leftarrow}R:=\bigl\{(\min R_{h},h)\bigr\}_{h},
  \qquad
  \partial^{\rightarrow}R:=\bigl\{(\max R_{h},h)\bigr\}_{h},
\]
for the sets of sites of $R$ having, at their own height, no site of $R$ to
their left (resp. right). It is the left (resp. right) side of $R$.

The three crossing events we use are
\[
  C^{\uparrow}(R):=\bigl\{\partial^{\downarrow}R\to[R]\ \partial^{\uparrow}R\bigr\},
\]
\[
  C^{\rightarrow}(R):=\bigl\{\partial^{\leftarrow}R\to[R]\ \partial^{\rightarrow}R\bigr\},
  \qquad
  C^{\leftarrow}(R):=\bigl\{\partial^{\rightarrow}R\to[R]\ \partial^{\leftarrow}R\bigr\},
\]
namely the events that $R$ is crossed from its base to its top, from its left side to its right
side, and \emph{vice versa}. A crossing path is thus contained in $R$, both of
its endpoints included, but it crosses $R$ in the sense that it could not go one step further in the considered direction without leaving $R$.

Real translations of the boxes do not preserve $\Ze$ in general. The box
$z+R(u,v)$ may contain one site more or one site less per line than $R(u,v)$, so that its crossing probability may differ from that of $R(u,v)$. As in
\cite[Sec.~5.3]{ArcherEtAl2026}, every crossing estimate below is stated for one
box $R(u,v)$ and is to be understood uniformly for all its translates
$z+R(u,v)$ with $z\in\R\times[0,\infty)$. The proofs are written for one
position, the extension to the others requiring only minor adaptations, which
we do not write. Moreover, each crossing estimate is stated for a box at its lowest position, so that we only apply the convention above to translations by vectors $z\in\R\times[0,\infty)$, which shift the box upwards and thus keep it in $\Ze$.

\section{Crossing boxes in the supercritical regime}
\label{sec:boxes}

Proposition~\ref{prop:strict} states that the right edge issued from $H_{0}^{-}$
travels at a positive speed $\alpha(\delta)$, and, by symmetry, that the left edge
issued from $H_{0}^{+}$ travels at speed $-\alpha(\delta)$. The cone of
propagation, that is, the intersection of the two reachable regions, therefore has
non-empty interior, of opening $2 \,\alpha(\delta)$ around the vertical. The purpose of this section is to show that a box whose base has arbitrarily small relative width, and whose other two sides follow one of the two boundary directions $(\alpha(\delta),1)$ or $(-\alpha(\delta),1)$ of the cone, is crossed from bottom to top with probability tending to $1$ as its size grows.

From now on, $\delta\in[0,1)$ is fixed and we write $\alpha:=\alpha(\delta)$,
which is positive by Proposition~\ref{prop:strict}.

\begin{proposition}[Supercritical box-crossing]
\label{prop:boxsuper}
For every $\rho>0$ and
every $\varepsilon>0$, there exists $n_{0}$ such that, for every $n\ge n_{0}$,
letting
\[
  u=(\rho n,0),
  \qquad
  v^{+}=(\alpha n,n),
  \qquad
  v^{-}=(-\alpha n,n),
\]
we have
\[
  \PP\bigl(C^{\uparrow}(R(u,v^{+}))\bigr)>1-\varepsilon
  \qquad\text{and}\qquad
  \PP\bigl(C^{\uparrow}(R(u,v^{-}))\bigr)>1-\varepsilon.
\]
\end{proposition}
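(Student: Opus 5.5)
The idea is to extract the crossing path from the right edge of the process started at a single point of the base, using only the almost sure speed from Propositions~\ref{prop:speeds} and~\ref{prop:strict}. I treat $R(u,v^{+})$; the case $v^{-}$ follows by the reflection $(x,n)\mapsto(-x,n)$, which maps $r$ to $-\ell$. At height $k\in\{0,\dots,n-1\}$ the box is, up to $O(1)$ lattice corrections, $\{x:\alpha k\le x<\alpha k+\rho n\}$.

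\textbf{Starting point and uniform edge control.} Put $b\in H_{0}$ with $b\approx\rho n/2$, and set $r^{b}_{k}:=\max\reach_{k}(H_{0}^{b})$. By translation invariance, $(r^{b}_{k}-b)_{k}$ has the law of $(r_{k})_{k}$. Proposition~\ref{prop:speeds} gives $r_{k}/k\to\alpha$ a.s. Hence, for a fixed $\eta\ll\rho$ (say $\eta=\rho/16$) and $n$ large, with probability at least $1-\varepsilon/4$ we have
\[
  |r_{k}-\alpha k|\le\eta n\qquad\text{for all }0\le k\le n.
\]
For $k\ge k_{0}$ this comes from the a.s.\ convergence. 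For $k<k_{0}$ it holds because $|r_{k}|$ is a.s.\ finite (Lemma~\ref{lem:basic}), so it is eventually $\le\eta n$.

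Next take the rightmost endpoint $(r^{b}_{n-1},n-1)$ and a path $\gamma$ from $H_{0}^{b}$ reaching it. Every site of $\gamma$ at height $k$ has $x\le r^{b}_{k}\le b+\alpha k+\eta n$, which is less than $\alpha k+\rho n$. So $\gamma$ never exits through the right side. It remains to show that $\gamma$ starts in the base and stays to the right of the line $x=\alpha k$.

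\textbf{Left side: the main step.} This is the step I expect to be the main obstacle, since a path to the rightmost point might a priori wander left. I would use the semigroup property~\eqref{eq:semigroup}. If $\gamma$ visits at height $k$ a site with $x<\alpha k+\rho n/4$, then
\[
  r^{b}_{n-1}\le\max\reach_{k,n-1}\bigl(H_{k}^{\lceil\alpha k+\rho n/4\rceil}\bigr).
\]
By the shifted-copy argument from the proof of Proposition~\ref{prop:speeds}, the right-hand side is distributed as $\alpha k+\rho n/4+r_{n-1-k}$. With high probability it is therefore at most $\alpha(n-1)+\rho n/4+\eta n$. On the other hand $r^{b}_{n-1}\ge\alpha(n-1)+\rho n/2-\eta n$, which contradicts $\eta<\rho/8$.

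To make this hold for all $k$ at once, I would fix a mesh $\kappa n$ with $\kappa\ll\eta/(1+\alpha)$ and use only the heights $k_{j}=j\kappa n$. A union bound over the $O(1/\kappa)$ events gives total failure probability at most $\varepsilon/2$. Between consecutive grid heights a path moves by at most $\kappa n$, so the margin $\rho n/4$ absorbs this and also the $O(1)$ lattice errors. The same argument with $k=0$ shows that $\gamma$ starts at some $x\ge\rho n/4\ge0$, that is, inside the base $\partial^{\downarrow}R$.

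\textbf{Conclusion and uniformity.} On the intersection of these events, $\gamma$ is an open path from $\partial^{\downarrow}R$ to $\partial^{\uparrow}R$ contained in $R$, so $C^{\uparrow}(R(u,v^{+}))$ holds. All the estimates are statements about the law of $(r_{k})$ under $\PP$, which is annealed and stationary under vertical shifts by the stationarity of $\xi$. The bounds are therefore uniform over translates $z+R(u,v^{\pm})$, up to the $O(1)$ boundary adjustments allowed by the convention of Section~\ref{sec:geometry}.
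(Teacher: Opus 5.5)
Your proof is correct. Its overall structure matches the paper's: the right edge is started from a half-line, almost sure control of $r_k$ handles the right side and the top, and the left side is handled by contradiction through the shifted copy $\max\reach_{k,n-1}(H_k^{c})-c\overset{\text{law}}{=}r_{n-1-k}$ with a deterministic threshold $c$. Starting from $H_0^b$ with $b\approx\rho n/2$ in $R(u,v^{+})$, rather than from $H_0^{-}$ in the box translated by $(-\rho n/2,0)$, is only a change of frame.

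The genuine difference is how you make the left-side estimate hold at all heights at once. The paper tests every height $0\le i<n-1$ against the true left boundary, so each failure probability must be $o(1/n)$. This is what Lemma~\ref{lem:tail} provides: a bound $e^{-cn}$ obtained from a Chernoff bound on the i.i.d.\ blocks $X_{M(i-1),Mi}$, so that the union bound costs $n\,e^{-cn}$.

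You test only the $O(1/\kappa)$ mesh heights, against a threshold lying $\rho n/4$ inside the box. You then interpolate using the fact that oriented paths move by $\pm1$ per step. A union bound over boundedly many heights needs only $\PP\bigl(\max_{j\le n}(r_j-\alpha j)>\eta n\bigr)\to0$. This follows from the almost sure convergence of Proposition~\ref{prop:speeds}, which you already use for the uniform edge control. You should say explicitly that $n$ is taken large depending on $\kappa$, so that each of the $O(1/\kappa)$ failure probabilities is below roughly $\kappa\varepsilon/4$; "with high probability" alone does not quite convey this.

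Your route dispenses with Lemma~\ref{lem:tail} altogether, and in the paper that lemma serves only this proposition. The paper's route gives an exponentially small bound on the left-exit event. That buys nothing for the statement itself, because the final estimate $\PP(C^{\uparrow}(R))\ge\PP(A_n)-n\,e^{-cn}$ is in any case limited by $\PP(A_n)\to1$, which comes without a rate.

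A minor slip: your opening sentence speaks of the process "started at a single point of the base". The argument actually, and necessarily, uses the half-line $H_0^b$, since that is what makes the distributional identity with $(r_k)$ available.
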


Our proof of Proposition~\ref{prop:boxsuper} requires the following lemma.

\begin{lemma}
\label{lem:tail}
Let $\rho>0$. There
exist $c>0$ and $n_{0}\in\N$ such that, for every $n\ge n_{0}$, every
$k\le m\le n$ and every $b\in H_{k}$,
\begin{equation}
  \label{eq:tailk}
  \PP\bigl(\max\reach_{k,m}(H_{k}^{b})>b+\alpha(m-k)+\rho n\bigr)<e^{-cn}.
\end{equation}
\end{lemma}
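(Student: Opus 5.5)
By translation invariance, it suffices to prove \eqref{eq:tailk} for the right edge $r_{j}$ itself. Fix $k\le m$, $b\in H_{k}$, and set $j:=m-k$. As in the proof of Proposition~\ref{prop:speeds}, the translation $\theta_{b,k}$ preserves $\Ze$ and maps $H_{0}^{-}$ onto $H_{k}^{b}$. The environment and the uniform variables above height $k$ have a $\theta_{b,k}$-invariant law, since $\xi$ is i.i.d.\ and the $U_{z}$ are i.i.d. Hence, under the annealed law $\PP$,
\[
  \max\reach_{k,m}(H_{k}^{b})-b\ \overset{\text{law}}{=}\ r_{j}.
\]
So it is enough to find $c>0$ and $n_{0}$ such that $\PP(r_{j}>\alpha j+\rho n)<e^{-cn}$ for all $n\ge n_{0}$ and $0\le j\le n$.

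\emph{Trivial range of $j$.} Since $r_{j}\le j$, the event is empty unless $(1-\alpha)j>\rho n$. If $\alpha=1$ there is nothing to prove. Otherwise we may assume $j\ge\rho n/(1-\alpha)\ge\rho n$, so $j$ is of order $n$.

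\emph{Choice of a block length.} By Proposition~\ref{prop:strict}, $\alpha\in(0,1]$ is finite. By Proposition~\ref{prop:speeds}, $\alpha=\inf_{L}a_{L}/L$. Fix $L$ with $a_{L}\le(\alpha+\rho/4)L$. Write $j=tL+s$ with $0\le s<L$. Iterating~\eqref{eq:X} along the times $0,L,2L,\dots,tL,j$ gives
\[
  r_{j}\ \le\ \sum_{i=1}^{t}X_{(i-1)L,iL}+X_{tL,j}\ \le\ \sum_{i=1}^{t}X_{(i-1)L,iL}+L,
\]
because $X_{tL,j}\le s$ (the $x$-coordinate increases by at most one per step). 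The proof of Proposition~\ref{prop:speeds} shows that the $X_{(i-1)L,iL}$ are i.i.d.\ with the law of $r_{L}$. Indeed, each $X_{m,m+L}$ is independent of $\mathcal{F}_{m}$, and $X_{(i-1)L,iL}$ is $\mathcal{F}_{iL}$-measurable.

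\emph{Chernoff bound.} Each $Y_{i}:=X_{(i-1)L,iL}$ is bounded above by $L$, and $\EE[Y_{i}]=a_{L}$ is finite by Lemma~\ref{lem:basic}. Hence $\Lambda(\lambda):=\log\EE[e^{\lambda(Y_{1}-a_{L})}]$ is finite for every $\lambda\ge0$, with $\Lambda(0)=\Lambda'(0^{+})=0$. The exponential Markov inequality then gives some $c_{1}>0$ with
\[
  \PP\Bigl(\sum_{i=1}^{t}Y_{i}>(\alpha+\rho/2)Lt\Bigr)\le e^{-c_{1}t}.
\]
Take $n$ large enough that $L\le\rho n/4$. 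Then $(\alpha+\rho/2)Lt+L\le\alpha j+\rho n$, so $\PP(r_{j}>\alpha j+\rho n)\le e^{-c_{1}t}$. Since $j\ge\rho n$ in the nontrivial range, $t\ge\rho n/L-1$. Choosing $c<c_{1}\rho/L$ and $n_{0}$ large gives the strict bound $<e^{-cn}$, uniformly in $j\le n$, hence uniformly in $k\le m\le n$ and $b$.

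The only delicate point is the Cramér-type bound for variables that are merely bounded above, with possibly heavy left tails. It works because only positive exponents $\lambda$ are needed, and for those the moment generating function is finite and differentiable at $0^{+}$ with derivative $0$ after centering.
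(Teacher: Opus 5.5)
Your proof is correct and follows essentially the paper's route. The translation $\theta_{b,k}$ reduces the claim to $r_j$, the bound~\eqref{eq:X} is iterated over blocks of fixed length with the remainder controlled by the at-most-one-step displacement, and a Chernoff bound is applied to the i.i.d.\ block increments, which are bounded above. The only difference is how small $j$ is handled: the paper sets the deviation threshold at $\rho n/4$ and uses $\EE[e^{\lambda Z_1}]<1$, so the bound $e^{-\lambda\rho n/4}$ holds for any number of blocks, whereas you first discard $j\le\rho n/(1-\alpha)$ using $r_j\le j$ and then turn a Cramér bound in the number of blocks into one in $n$.
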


\begin{proof}
By the translation $\theta_{b,k}$ of the proof of Proposition~\ref{prop:speeds},
\[
  \bigl(\max\reach_{k,k+j}(H_{k}^{b})-b\bigr)_{j\ge0}
  \ \overset{\text{law}}{=}\ (r_{j})_{j\ge0},
\]
so the probability in~\eqref{eq:tailk} equals $\PP(r_{j}>\alpha j+\rho n)$ with
$j:=m-k\le n$. It therefore suffices to bound $\PP(r_{j}>\alpha j+\rho n)$ uniformly
over $j\le n$.

Since $a_{M}/M\to\alpha$, we can
fix $M\in\N$ with
\begin{equation}
  \label{eq:choiceM}
  a_{M}<\Bigl(\alpha+\frac{\rho}{4}\Bigr)M .
\end{equation}
Write $j=Ml+t$ with $0\le t<M$ and $l\le n/M$. By~\eqref{eq:X},
$r_{j}\le r_{Ml}+X_{Ml,j}$, and $X_{Ml,j}\le j-Ml=t$ since the $x$-coordinate
increases by at most $1$ at each level. Here, our model differs
from~\cite[Lemma~18]{ArcherEtAl2026}, where the increments of the right edge are
only dominated by geometric variables. Hence, if $r_{j}>\alpha j+\rho n$, then
\[
  r_{Ml}>\alpha j+\rho n-t\ \ge\ \alpha Ml+\rho n-M\ \ge\ \alpha Ml+\frac{\rho}{2}n
\]
as soon as $n\ge2M/\rho$.

Iterating~\eqref{eq:X} along the intervals $[M(i-1),Mi]$ and using $r_{0}=0$
gives
\[
  r_{Ml}\ \le\ \sum_{i=1}^{l}X_{M(i-1),Mi},
\]
where, by the proof of Proposition~\ref{prop:speeds}, the variables
$X_{M(i-1),Mi}$, $i\ge1$, are i.i.d.\ with the law of $r_{M}$. Define
\[
  Z_{i}:=X_{M(i-1),Mi}-\Bigl(\alpha+\frac{\rho}{4}\Bigr)M ,
  \qquad
  S_{l}:=\sum_{i=1}^{l}Z_{i}\ \ge\ r_{Ml}-\Bigl(\alpha+\frac{\rho}{4}\Bigr)Ml.
\]
On the event $\{r_{j}>\alpha j+\rho n\}$ we thus have
$S_{l}>\rho n/2-\rho Ml/4\ge\rho n/4$, since $Ml\le n$.
If $l=0$, then $S_{0}=0$ and this event is empty.

Since $r_{M}\le M$, the random variables $Z_{i}$ are bounded from above, and they
are integrable, by Lemma~\ref{lem:basic}, with $\EE[Z_{1}]<0$
by~\eqref{eq:choiceM}. So there exists $\lambda>0$ such that
$\EE[e^{\lambda Z_{1}}]<1$. For $l\ge1$, Chernoff's bound then gives
\[
  \PP\Bigl(S_{l}>\frac{\rho}{4}n\Bigr)
  \le e^{-\lambda\rho n/4}\,\EE\bigl[e^{\lambda Z_{1}}\bigr]^{l}
  <e^{-\lambda\rho n/4}.
\]
This proves~\eqref{eq:tailk} with $c:=\lambda\rho/4$ and $n_{0}\ge2M/\rho$.
\end{proof}

\begin{proof}[Proof of Proposition~\ref{prop:boxsuper}]
The reflection $(x,n)\mapsto(-x,n)$ preserves $\Ze$ together with its oriented
graph structure, it preserves $\PP$ and maps $R(u,v^{+})$ onto a
translate of $R(u,v^{-})$. Therefore, the second inequality follows from the first one.

By the convention of Section~\ref{sec:geometry}, it suffices to consider $R:=(-\rho n/2,0)+R(u,v^{+})$. Note that $\rho n\ge2$ for $n$ large, so that the lowest and highest lines of $R$
contain at least one site and $C^{\uparrow}(R)$ cannot be empty.
Consider the event
\[
  A_{n}:=\Bigl\{\,\alpha m-\frac{\rho}{4}n\ \le\ r_{m}\ \le\ \alpha m+\frac{\rho}{4}n
    \quad\text{for every } m\le n-1\,\Bigr\}.
\]
By Proposition~\ref{prop:speeds}, $r_{m}/m\to\alpha$ almost surely, hence
$\PP(A_{n})\to1$ as $n\to\infty$.

By Lemma~\ref{lem:basic} applied to $H_{0}^{-}$, the set $\reach_{n-1}(H_{0}^{-})$
is almost surely non-empty. Thus, there is an open path
$\gamma=((x_{0},0),\dots,(x_{n-1},n-1))$ with $x_{0}\in H_{0}^{-}$ and $x_{n-1}=r_{n-1}$.
If multiple such paths exist, we choose one using some arbitrary procedure. On $A_{n}$, since $x_{i}\in\reach_{i}(H_{0}^{-})$ for every $i$, we have $x_{i}\le r_{i}\le\alpha i+\rho n/4$, so that $\gamma$ never exits $R$ through its right side. Its endpoint $x_{n-1}=r_{n-1}$ satisfies $\alpha(n-1)-\rho n/4\le x_{n-1}\le\alpha(n-1)+\rho n/4$, so that it belongs to the top side of $R$. Notice that, in contrast with
the proof of~\cite[Lemma~8]{ArcherEtAl2026}, we do not need to treat heights of different parities separately, since the bound holds at every height rather than at even ones only.

It remains to exclude the case where $\gamma$ exits $R$ through its left side (or starts to the left of the bottom side of $R$). Suppose that it does, and let $0 \leq i< n-1$ be the first height at which $x_{i}<\alpha i-\rho n/2$. Since $\gamma$ ends at
$x_{n-1}\ge\alpha(n-1)-\rho n/4$, the portion of $\gamma$ above height $i$ is an
open path from $(x_{i},i)$ to height $n-1$ reaching an $x$-coordinate at least
$\alpha(n-1)-\rho n/4$, so that
\[
  \max\reach_{i,n-1}\bigl(H_{i}^{\,b_{i}}\bigr)
  \ \ge\ \alpha(n-1)-\frac{\rho}{4}n,
  \qquad
  b_{i}:=\max\Bigl\{x\in H_{i}:x<\alpha i-\frac{\rho}{2}n\Bigr\}.
\]
Since $b_{i}<\alpha i-\rho n/2$, we deduce
\[
  \max\reach_{i,n-1}\bigl(H_{i}^{\,b_{i}}\bigr)
  \ >\ b_{i}+\alpha(n-1-i)+\frac{\rho}{4}n.
\]
Each $b_{i}$ being deterministic, Lemma~\ref{lem:tail}, applied with $m=n-1$
and $\rho/4$ in place of $\rho$, and a union bound over $0\le i<n-1$ bound by
$n\,e^{-cn}$ the probability that, on $A_{n}$, $\gamma$ exits $R$ through its
left side. Hence, on $A_{n}$ minus an event of probability at most
$n\,e^{-cn}$, the path $\gamma$ stays in $R$ and crosses it from the base to the
top, so that $\PP(C^{\uparrow}(R))\ge\PP(A_{n})-n\,e^{-cn}>1-\varepsilon$ for
$n$ large enough.
\end{proof}

\section{Crossing bad times}
\label{sec:badtimes}

Proposition~\ref{prop:boxsuper} says that, on average over the environment, boxes can be crossed by an inside path with arbitrarily high probability. Nevertheless, it says nothing about what happens inside a time interval of consecutive bad lines. Such bad times do occur. Indeed, for every $k$, the event that the lines $H_{n+1},\dots,H_{n+k}$ are
all bad has probability $\delta^{k}>0$, and occurs for infinitely many $n$ almost surely. During these bad times, the model is exactly homogeneous oriented site
percolation at the critical threshold $\pc$. Therefore, no estimate of the form $1-\varepsilon$ can hold. Nevertheless, we can rely on less, it is enough for the renormalization of
Section~\ref{sec:renorm} that the crossing probability is bounded below by a
constant, uniformly in the scale of the box. This is what the box-crossing property
of~\cite{DuminilCopinTassionTeixeira2018} for critical two-dimensional oriented
percolation provides at $\pc$.

The reflection $(x,n)\mapsto(-x,n)$ preserves $\Ze$ together with its oriented
graph structure, it preserves $\PP_{\pc}$ and exchanges $C^{\rightarrow}(R(u,v))$
and $C^{\leftarrow}(R(u,v))$ up to a translation. Therefore, any bound proved for one holds for the
other.

\begin{theorem}[{\cite[Theorem~1.3, Proposition~4.2, Remark~4.4]{DuminilCopinTassionTeixeira2018}}]
\label{thm:DuminilCopinTassionTeixeira2018}
There exists $\varepsilon>0$ such that, for any $n \in \N$ large enough, there exists
$w_{n} \in [\varepsilon n^{2/5},n^{1-\varepsilon}]\cap\mathbb{Z}$ such that
\[
  \PP_{\pc}\bigl(C^{\rightarrow}(R(3u,v))\bigr)\ \ge\ \varepsilon,
  \qquad
  \PP_{\pc}\bigl(C^{\uparrow}(R(u,3v))\bigr)\ \ge\ \varepsilon,
  \qquad
  \PP_{\pc}\bigl(C^{\leftarrow}(R(3u,v))\bigr)\ \ge\ \varepsilon,
\]
with $u=(w_{n},0)$ and $v=(0,n)$.
\end{theorem}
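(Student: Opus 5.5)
This statement is imported from~\cite{DuminilCopinTassionTeixeira2018}, so the plan is to translate their results into the geometric conventions of Section~\ref{sec:geometry}, not to reprove them. The key input is their box-crossing property for critical two-dimensional oriented percolation. There is a characteristic width $w_{n}$ such that a rectangle of width of order $w_{n}$ and height of order $n$ is crossed in the ``hard'' directions with probability bounded below uniformly in $n$. Concretely, I will use three estimates from their paper. First, Theorem~1.3 gives a uniform lower bound for vertical crossings of $R(u,3v)$, i.e.\ of rectangles three times longer than the characteristic height. Second, Proposition~4.2 gives the analogous bound for horizontal crossings of $R(3u,v)$, where an oriented path goes from the left side to the right side while moving upwards. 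Third, Remark~4.4 supplies the growth bounds $\varepsilon n^{2/5}\le w_{n}\le n^{1-\varepsilon}$, the lower one coming from their a priori bounds on the characteristic length and the upper one from the sub-ballistic (zero-speed) behaviour at $\pc$.

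The first step is to check that their lattice and model coincide with ours at $\pc$. We use critical oriented site percolation on $\Ze$, which is $\Z_{+}\times\Z_{+}$ rotated by $\pi/4$. I will verify that their $w_{n}$ is defined for the same rotated lattice, so that their boxes are axis-parallel in our coordinates with height measured by the number of lines. If their statement is for bond percolation or for the unrotated lattice, I will note that their arguments are written for a general class including site percolation, or adapt the conventions accordingly.

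The second step is to match the crossing events. Their horizontal crossing is realized by an oriented path inside the box joining the left side to the right side, which is exactly $C^{\rightarrow}$ as defined here. The event $C^{\leftarrow}$ then follows from the reflection symmetry noted just before the statement. Their vertical crossing is our $C^{\uparrow}$.

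Finally, I will address rounding. Our boxes are intersections with $\Ze$ of real parallelograms, so the number of sites per line may differ by one from theirs. I will absorb this by monotonicity in the box shape, or by slightly shrinking $\varepsilon$, choosing $w_{n}$ to be an integer. The main obstacle I anticipate is bookkeeping of exactly which aspect ratios their Proposition~4.2 covers, namely the factor $3$ in the horizontal direction. If it is stated only for ratio $2$, I will get ratio $3$ by a standard gluing: chain crossings of overlapping boxes using vertical crossings and FKG, at the cost of a smaller constant $\varepsilon$.
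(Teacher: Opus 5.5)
Your plan agrees with the paper, which gives no proof of this statement and simply imports it from \cite[Theorem~1.3, Proposition~4.2, Remark~4.4]{DuminilCopinTassionTeixeira2018}: the factor $3$ and the bounds on $w_{n}$ are part of the cited results, and $C^{\leftarrow}$ follows from the reflection noted just before the statement. Two of your asides should not be relied on: the fallback ``standard gluing'' of overlapping horizontal crossings through a vertical one does not carry over directly to the oriented setting (the two horizontal crossings may meet the vertical one in the wrong order, and a path cannot be followed downwards), and $w_{n}\le n^{1-\varepsilon}$ does not follow from the zero edge speed at $\pc$, which only gives $o(n)$, but is a quantitative output of the box-crossing theory of \cite{DuminilCopinTassionTeixeira2018}.
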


Decreasing $\varepsilon$ keeps the statement true, the interval allowed for
$w_{n}$ becoming larger and the lower bounds weaker, so we assume
$\varepsilon\le1/2$ from now on.

\begin{remark}
Notice that the boxes above are rectangular, while those of
Proposition~\ref{prop:boxsuper} are tilted along the directions
$(\pm\alpha(\delta),1)$. For the classical oriented site percolation model at $\pc$, the edge
speed is $0$, so the process has no drift and we should not tilt the boxes.

Since $w_{n}\le n^{1-\varepsilon}$, the width of such a rectangular box grows strictly slower than its length. Unlike in the non-oriented case, the natural box at
$\pc$ is not $n\times n$ but $w_{n}\times n$, $w_{n}$ being
comparable to the typical width of a critical cluster reaching height $n$~\cite[Proposition~4.1(iii)]{DuminilCopinTassionTeixeira2018}.
\end{remark}

By the coupling of Section~\ref{sec:intro}, the bounds above also hold in our model.

\begin{corollary}
\label{cor:DuminilCopinTassionTeixeira2018quenched}
With $\varepsilon$, $n$, $w_n$, $u$ and $v$ as in Theorem~\ref{thm:DuminilCopinTassionTeixeira2018}, the
three lower bounds hold with $\Pxi$ in place of $\PP_{\pc}$, for every
environment $\xi$.
\end{corollary}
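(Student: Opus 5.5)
The corollary follows from the coupling~\eqref{eq:coupling} together with the monotonicity of crossing events. Fix an environment $\xi$. Under the coupling, $\eta^{\mathrm{crit}}$ has law $\PP_{\pc}$, since the $U_{z}$ are i.i.d.\ uniform and independent of $\xi$. Conditionally on $\xi$, $\eta^{\xi}$ has law $\Pxi$. By~\eqref{eq:sandwich}, $\eta^{\mathrm{crit}}\le\eta^{\xi}$ pointwise.

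The key observation is that each of the three events $C^{\rightarrow}(R)$, $C^{\uparrow}(R)$ and $C^{\leftarrow}(R)$ is increasing in the occupation configuration. Each is defined by the existence of an open oriented path inside a fixed box $R$ between two fixed sets of sites. Opening more sites keeps every previously open path open, and the box and its sides $\partial^{\downarrow}R,\partial^{\uparrow}R,\partial^{\leftarrow}R,\partial^{\rightarrow}R$ do not depend on the configuration. Hence, for each such event $E$,
\[
\ind_E(\eta^{\mathrm{crit}})\le\ind_E(\eta^{\xi}).
\]
Taking the conditional expectation given $\xi$ gives
\[
\Pxi(E)\ \ge\ \PP\bigl(E(\eta^{\mathrm{crit}})\mid\xi\bigr)=\PP_{\pc}(E),
\]
where the last equality uses the independence of $\eta^{\mathrm{crit}}$ from $\xi$.

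Applied to $R(3u,v)$ and $R(u,3v)$ with the $w_{n}$ of Theorem~\ref{thm:DuminilCopinTassionTeixeira2018}, this yields all three lower bounds by $\varepsilon$ under $\Pxi$, for every environment $\xi$. The same argument covers every translate of the boxes allowed by the convention of Section~\ref{sec:geometry}.

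No step is a real obstacle. The only point to check is that the crossing events involve paths whose initial site need not be open. This leaves them increasing, because the condition on the path is still that sites are open, never that they are closed.
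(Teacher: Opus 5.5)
Your proof is correct and follows the paper's own argument: the three crossing events are increasing, $\eta^{\mathrm{crit}}$ has law $\PP_{\pc}$ and satisfies $\eta^{\mathrm{crit}}\le\eta^{\xi}$ by~\eqref{eq:sandwich}, so any crossing realized by $\eta^{\mathrm{crit}}$ is realized by $\eta^{\xi}$ uniformly in $\xi$. Your extra remarks about the non-required openness of the initial site and the translates of the boxes are harmless additions.
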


\begin{proof}
The three crossing events are increasing in the occupation configuration
$(\eta_{z})$, and $\eta^{\mathrm{crit}}$, which has law $\PP_{\pc}$, satisfies
$\eta^{\mathrm{crit}}\le\eta^{\xi}$ by~\eqref{eq:sandwich}. Any crossing realized
by $\eta^{\mathrm{crit}}$ is therefore realized by $\eta^{\xi}$, uniformly
in $\xi$.
\end{proof}

\section{Renormalization}
\label{sec:renorm}

We now combine the estimates of Sections~\ref{sec:boxes} and~\ref{sec:badtimes}
into a renormalization scheme, following~\cite[Sec.~5.5]{ArcherEtAl2026}. The renormalized model is an oriented site--bond percolation on the symmetric oriented
lattice, with steps $(\pm1,1)$, whose vertices correspond to the successive
positions of a path traveling inside the cone of propagation. Its edges are
realized by boxes along the two boundary directions of that cone, open
with probability close to $1$ by Proposition~\ref{prop:boxsuper}. That estimate,
however, is not available everywhere: it holds on the heights where the
environment behaves typically, and the others come in geometrically distributed
runs, each of which has to be climbed at a constant cost by means of
Corollary~\ref{cor:DuminilCopinTassionTeixeira2018quenched}. These runs are carried by the vertices. Site--bond renormalized models have already proved useful in~\cite[Sec.~4]{LimaSidoraviciusVares2023}, in an
unoriented setting where, unlike here, the bad layers are carried by the bonds. Once the cost of the vertices is
charged to the edges, this is the situation covered by Theorem~\ref{thm:HilarioSaSanchisTeixeira2026} of Hilário,
Sá, Sanchis and Teixeira on oriented percolation with geometric defects.

The renormalized lattice is therefore not fixed on $\Ze$: the heights of its \emph{floors}, that is of the successive horizontal levels of
its vertices, depend on where the estimate happens to be available, which is a
matter of the environment and hence random. However, the geometry of the construction is not random. Once the heights of the floors are fixed, the transport boxes and the corridors, and hence
the events declaring vertices and edges open, are determined. We therefore separate the two. In
Section~\ref{sec:renormlattice} we fix an arbitrary choice of the heights of the floors and carry out the whole construction for that choice. In
Section~\ref{sec:renormZe} we show that for such an arbitrary choice, an
infinite open path on the renormalized lattice gives an infinite open path on
$\Ze$, which is Proposition~\ref{prop:reconstruction}. Only in
Section~\ref{sec:renormperc} do we show that we can exhibit one choice of the heights of the floors for which the renormalized model percolates, which is
Proposition~\ref{prop:renormperc}. Then Theorem~\ref{thm:main} follows.

\subsection{Preliminaries}

Since all paths of $\Ze$ move by $(\pm1,1)$, for an open oriented path
$\gamma$ we write $\gamma(h)$ for the $x$-coordinate of the unique site of
$\gamma$ at height~$h$, defined for $h$ in the height range of $\gamma$. The
whole reconstruction below relies on the following intuitive
property.

\begin{lemma}
\label{lem:planar}
Let $\gamma_1,\gamma_2,\pi$ be open oriented paths of $\Ze$ and $h \leq h'$ be two heights at which all three paths are defined. 
\begin{enumerate}
\item[(i)] If
  $\gamma_1(h)\le\gamma_2(h)$ and $\gamma_1(h')\ge\gamma_2(h')$, then
    \[
    s:=\min\bigl\{l\ge h:\gamma_1(l)\ge\gamma_2(l)\bigr\}
  \]
  satisfies $h\le s\le h'$ and $\gamma_1(s)=\gamma_2(s)$. Thus
  $\gamma_1$ and $\gamma_2$ share a site, and the path obtained by following one path
  up to height $s$ and the other one afterwards is open.
\item[(ii)] If $\gamma_1(h)<\pi(h)<\gamma_2(h)$ and $\gamma_1(h')=\gamma_2(h')$, then $\pi$ shares a site with $\gamma_1$ or with $\gamma_2$ at some height
  of $(h,h']$.
\end{enumerate}
\end{lemma}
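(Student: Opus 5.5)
The proof is a discrete intermediate value argument. The key fact is that along an oriented path of $\Ze$ every step changes the $x$-coordinate by exactly $\pm1$. We also use the parity constraint: at a fixed height $l$, all sites of $\Ze$ have $x$-coordinates of the same parity as $l$.

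For (i), consider $D(l):=\gamma_2(l)-\gamma_1(l)$ for $h\le l\le h'$. By parity, $D(l)$ is always even. Since each path moves by $\pm1$ at every step, $D(l+1)-D(l)\in\{-2,0,2\}$. The hypotheses give $D(h)\ge0$ and $D(h')\le0$, so the set $\{l\ge h: D(l)\le 0\}$ contains $h'$, and its minimum $s$ satisfies $h\le s\le h'$.

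If $s=h$, then $D(h)\ge0$ and $D(h)\le0$ give $D(h)=0$. If $s>h$, then $D(s-1)>0$ and $D(s-1)$ is even, so $D(s-1)\ge2$; since $D$ decreases by at most $2$ in one step, $D(s)\ge0$. Combined with $D(s)\le0$, this forces $D(s)=0$, that is, $\gamma_1(s)=\gamma_2(s)$.

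The concatenated path, following one path up to height $s$ and the other afterwards, is an oriented path because it passes through the common site $(\gamma_1(s),s)$. Every site on it other than its initial one is open, since each such site is a non-initial site of $\gamma_1$ or of $\gamma_2$. If $s$ coincides with the starting height of the second path, the common site is also the initial site of the first path, which is consistent with the convention on initial sites. So the glued path is open.

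For (ii), apply (i) twice. The pair $(\gamma_1,\pi)$ satisfies $\gamma_1(h)<\pi(h)$. The pair $(\pi,\gamma_2)$ satisfies $\pi(h)<\gamma_2(h)$. At height $h'$, since $\gamma_1(h')=\gamma_2(h')$, either $\pi(h')\le\gamma_1(h')$ or $\pi(h')\ge\gamma_2(h')$.

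In the first case, apply (i) to $\gamma_1,\pi$ in the roles of $\gamma_2,\gamma_1$: the inequalities are $\pi(h)\le\gamma_1(h)$ reversed, so we use $D=\pi-\gamma_1$, which satisfies $D(h)>0$ and $D(h')\le0$. This yields a meeting height $s$ with $\pi(s)=\gamma_1(s)$. Because $D(h)>0$, we have $s\ne h$, so $s\in(h,h']$.

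The second case is symmetric, with $\gamma_2$ in place of $\gamma_1$, and again yields a meeting height in $(h,h']$.

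The only delicate point is the parity bookkeeping, which guarantees that the paths cannot "jump over" each other without sharing a site. This parity argument works uniformly in all cases, so there is no real obstacle.
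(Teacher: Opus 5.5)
Your proof is correct and follows the paper's argument: (i) is the same parity and step-size discrete intermediate value argument, and for (ii) your direct case split on whether $\pi(h')$ lies weakly left or weakly right of the common value $\gamma_1(h')=\gamma_2(h')$ is simply the non-contradiction form of the paper's argument (there, $\gamma_1-\pi$ and $\pi-\gamma_2$ would otherwise both stay negative on $(h,h']$, so their sum $\gamma_1-\gamma_2$ could not vanish at $h'$). There are two harmless slips. First, in the first case of (ii) the roles are $\gamma_1\mapsto\gamma_1$ and $\pi\mapsto\gamma_2$, as your choice $D=\pi-\gamma_1$ in fact reflects, not the swap you state. Second, the remark that if $s$ is the starting height of the second path then the common site is the initial site of the first path is false in general; it is also unnecessary, since the site at height $s$ is taken from the first path and is therefore either the initial site of the glued path or an open non-initial site of that path.
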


\begin{proof}
At the same height, two sites are at an even distance, so $\gamma_1-\gamma_2$ takes
even values and since both $\gamma_1$ and $\gamma_2$ change their
$x$-coordinate by $\pm1$ at each step, $\gamma_1-\gamma_2$ changes by
$0$ or $\pm2$ between consecutive heights. Since $h'$ belongs to the set defining $s$, we have $h\le s\le h'$. If
$s=h$, then $\gamma_1(h)-\gamma_2(h)$ is both non-positive and non-negative,
hence zero. Otherwise $\gamma_1-\gamma_2$ is negative, hence at most $-2$, at
$s-1$ by definition of $s$, and non-negative at $s$. Since it changes
by at most $2$, it vanishes at $s$. The last assertion of (i) is trivial. For (ii), if $\pi$ shared a site with neither $\gamma_1$ nor $\gamma_2$ on
$(h,h']$, then $\gamma_1-\pi$ and $\pi-\gamma_2$ would never vanish and therefore would be strictly negative on
$(h,h']$. Thus, $\gamma_1-\pi + \pi-\gamma_2 = \gamma_1-\gamma_2$ would also be negative on $(h,h']$, contradicting
$\gamma_1(h')=\gamma_2(h')$.
\end{proof}

\subsection{The renormalization scheme}

The construction below aims at producing an instance of the following
result of oriented percolation with geometric defects
established in~\cite{HilarioSaSanchisTeixeira2026} by Hilário, Sá, Sanchis and Teixeira.

\begin{theorem}[{\cite[Theorem~8.2]{HilarioSaSanchisTeixeira2026}}]
\label{thm:HilarioSaSanchisTeixeira2026}
Let $p,\theta\in(0,1)$ and
$\mathbb{V}:=\{(i,n)\in\Z\times\Z_{+}:i+n\ \text{even}\}$, with the two oriented
edges $\bigl((i,n),(i\pm1,n+1)\bigr)$ issued from each vertex, the second
coordinate playing the role of time. Let $\zeta=(\zeta_{n})_{n\ge0}$ be a
sequence of i.i.d.\ random variables with $\PP(\zeta_{n}=k)=(1-\theta)\theta^{k}$
for $k\in\N$. Conditionally on $\zeta$, declare each of the two edges issued
from $(i,n)$ to be open independently with probability $p^{1+\zeta_{n}}$, and
denote by $\mathbb{P}^{\zeta}_{p}$ the law of this process. There exists $c>0$
such that if $\theta\le c$ and $p\ge1-c$, then for almost every $\zeta$, under
$\mathbb{P}^{\zeta}_{p}$, there is an infinite open path starting at $(0,0)$
with positive probability.
\end{theorem}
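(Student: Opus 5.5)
Theorem~\ref{thm:HilarioSaSanchisTeixeira2026} is quoted from~\cite{HilarioSaSanchisTeixeira2026} and is used as a black box in the rest of the paper. If I had to prove it, I would use a multiscale renormalization in the time direction, in the spirit of~\cite{KestenSidoraviciusVares2022}. The key structural fact is that all defects are carried by \emph{entire horizontal levels}: the two edges issued from every vertex at height $n$ share the same parameter $p^{1+\zeta_{n}}$. This gives the following picture.
\begin{itemize}
\item A level with $\zeta_{n}=k$ acts as a single time step at which each edge survives with probability $p^{1+k}$.
\item Such levels have density $(1-\theta)\theta^{k}$, so deep levels are exponentially rare in $k$.
\item However, arbitrarily deep levels occur infinitely often almost surely.
\end{itemize}
So no single-scale block argument can work, and the proof must handle every depth $k$ at its own scale.

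\textbf{Scales and good intervals.} I would fix rapidly growing scales $L_{j}$ (for instance $L_{j+1}=L_{j}^{\gamma}$ with $\gamma>1$) and defect thresholds $k_{j}$ growing linearly in $j$. I would call a time interval of length $L_{j}$ \emph{$j$-good} when two conditions hold:
\begin{itemize}
\item it contains no level with $\zeta_{n}>k_{j}$;
\item among its sub-intervals of length $L_{j-1}$, at most a bounded number (say two) are $(j-1)$-bad.
\end{itemize}
Since $\PP(\zeta_{n}>k_{j})=\theta^{k_{j}+1}$, a union bound over the $L_{j}$ levels, plus the standard independence argument for the bad sub-intervals, shows that $\PP(\text{$j$-bad})\le L_{j}^{-\beta}$ for some $\beta>0$. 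This is summable and self-improving, provided $\theta\le c$ is small enough relative to the growth of $k_{j}$ and $L_{j}$.

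\textbf{Inductive crossing estimate.} I would prove by induction on $j$ a quenched estimate for every $j$-good interval. For the space–time box of height $L_{j}$ and width proportional to $L_{j}$ sitting on that interval, the event that any point of a macroscopic middle segment of its base connects to a macroscopic segment of its top, staying in a slightly wider box, should have $\mathbb{P}^{\zeta}_{p}$-probability at least $1-e^{-L_{j}^{\kappa}}$.
\begin{itemize}
\item \emph{Base case.} For $j=0$ there are no defects beyond $k_{0}$, and $p$ close to $1$ makes this the classical supercritical oriented percolation estimate. This uses exponential decay of the dual contours and a linear edge speed close to $1$.
\item \emph{Induction step.} Concatenate the $(j-1)$-good sub-boxes by planarity, in the manner of Lemma~\ref{lem:planar}.
\item \emph{Bad sub-intervals.} Each of the at most two $(j-1)$-bad sub-intervals, of height $L_{j-1}$, must be crossed at a polynomial cost. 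For this I would use the many disjoint near-vertical paths available in a box of width $\asymp L_{j}$. Inside a single bad sub-interval, one fixed vertical path survives with probability at least $\prod_{n}p^{1+\zeta_{n}}$. With $\zeta_{n}\le k_{j}$ on the whole interval, this is at least $p^{(1+k_{j})L_{j-1}}$. Since $L_{j}\gg p^{-(1+k_{j})L_{j-1}}$ is impossible with polynomial scales, one should instead bootstrap with the $(j-2)$-structure inside the bad sub-interval.
\end{itemize}

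\textbf{Main obstacle.} The hard part is making the cost of a bad sub-interval small compared with the width available at scale $j$. The plan is to show, nested inside the induction, that any $L_{j-1}$ interval in which only a single level is deeper than $k_{j-1}$ is crossed with probability at least $p^{C(1+k_{j})}$. This is because the cluster only has to pass that one level, using a constant fraction of its width. The estimate should then be amplified over about $L_{j}/L_{j-1}$ disjoint lateral attempts, each with probability $p^{Ck_{j}}$. Since $k_{j}\asymp j$ and $L_{j}/L_{j-1}$ grows super-exponentially, $\bigl(1-p^{Ck_{j}}\bigr)^{L_{j}/L_{j-1}}$ is tiny, which closes the induction.

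\textbf{Conclusion.} Almost surely the interval $[0,L_{j}]$ is $j$-good for all large $j$, by Borel–Cantelli. Combining the quenched estimates at successive scales with a positive-probability start at the finitely many initial scales then gives an infinite open path from $(0,0)$ with positive $\mathbb{P}^{\zeta}_{p}$-probability.
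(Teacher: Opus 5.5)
The paper does not prove this statement. It imports \cite[Theorem~8.2]{HilarioSaSanchisTeixeira2026} as a black box, and only checks that exchanging the two coordinates and restricting to $\Z\times\Z_{+}$ is harmless: a path from $(0,0)$ uses only edges issued at times $n\ge0$, hence only $(\zeta_{n})_{n\ge0}$. Your first sentence is therefore all the paper needs. The multiscale sketch you add, however, has two genuine gaps.

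\emph{The parameters are inconsistent.} Take $L_{j+1}=L_{j}^{\gamma}$ and $k_{j}\asymp j$. The expected number of levels with $\zeta_{n}>k_{j}$ in an interval of length $L_{j}$ is $L_{j}\theta^{k_{j}+1}=\exp\bigl(\gamma^{j}\log L_{0}-O(j)\bigr)\to\infty$. So almost every such interval violates your first goodness condition, and the bound $\PP(j\text{-bad})\le L_{j}^{-\beta}$ is false. The union bound needs $k_{j}\ge(1+\beta)\log L_{j}/\log(1/\theta)$.

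If instead you keep $k_{j}\asymp j$, the union bound forces $\log L_{j}=O(j)$. Then $L_{j}/L_{j-1}$ stays bounded along infinitely many scales, and along those $(1-p^{Ck_{j}})^{L_{j}/L_{j-1}}\to1$, so the amplification fails.

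With the correct choice $k_{j}\asymp\log L_{j}$, one deep level costs $p^{Ck_{j}}=L_{j}^{-a}$ with $a=C(1+\beta)\log(1/p)/\log(1/\theta)$. This is only polynomially small. Amplifying over $L_{j}/L_{j-1}=L_{j}^{1-1/\gamma}$ attempts then works only if $a<1-1/\gamma$. This competition between the rarity $\theta^{k}$ of a depth-$k$ level and its cost $p^{k}$ is the heart of the theorem. It is also where $\theta\le c$ and $p\ge1-c$ must be used jointly, and your argument never engages with it.

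\emph{The induction does not close.} Your inductive hypothesis concerns only good intervals, but the induction step needs a lower bound on the crossing probability of an arbitrary $(j-1)$-bad sub-interval. By your definition, such an interval may be bad because it contains several levels deeper than $k_{j-1}$, possibly adjacent. It may also be bad because it contains three or more $(j-2)$-bad sub-intervals. Neither case is covered by your single-deep-level estimate. You need a second inductive statement bounding the crossing cost of a bad box in terms of its defect content, together with tail bounds for that content.

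Amplification causes a further problem. Amplifying over $L_{j}/L_{j-1}$ lateral attempts requires the incoming cluster to occupy a width of order $L_{j}$ at the entrance of the bad sub-interval. After one successful attempt, the cluster has width $O(L_{j-1})$ and needs a time comparable to the lost width to spread again. Two bad sub-intervals close to each other, which ``at most two'' allows, therefore cannot both be amplified. Goodness needs a separation requirement, and the inductive statement must propagate a wide occupied set.

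Your crossing estimate does not provide such a set. Read as ``every point of the segment connects'', it is false, since both edges issued from a vertex are closed with probability at least $(1-p)^{2}$. Read as ``some point connects'', it gives no width.
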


In~\cite[Theorem~8.2]{HilarioSaSanchisTeixeira2026} the lattice is
$\{(i,j)\in\Z^{2}:i+j\ \text{even}\}$, with time as first coordinate and defects
indexed by $\Z$. Exchanging the two coordinates gives the statement above on
$\Z^{2}$, and the restriction to $\Z\times\Z_{+}$ is harmless since an open path
starting at $(0,0)$ only uses edges issued from the times $n\ge0$, hence only
involves $(\zeta_{n})_{n\ge0}$.

We proceed in three steps. In Section~\ref{sec:renormlattice} we construct a
candidate site--bond model. In Section~\ref{sec:renormZe} we prove that it
transfers to $\Ze$: an infinite open path of the renormalized lattice gives an
infinite open path of $\Ze$. In Section~\ref{sec:renormperc} we prove that the
candidate dominates the model above and, therefore, percolates.

\subsubsection{The renormalized lattice}
\label{sec:renormlattice}

We now define the grid, the transport boxes, and the corridors, and declare which
vertices and edges are open. Nothing is proved in this section.

Let $(X_{k})_{k\ge0}$ be an arbitrary sequence of random
variables with values in $\{0,1\}$ and with infinitely many ones almost surely. It is a selection procedure for the slabs, which fixes the heights of the floors: the slab $k$ is
\emph{retained} if $X_{k}=1$, and \emph{discarded} otherwise. No assumption is
made on its law, nor on its relation to the environment. The geometric
construction below, and the passage from the renormalized lattice to $\Ze$ in
Section~\ref{sec:renormZe}, hold for every selection procedure. Only in
Section~\ref{sec:renormperc} do we choose a specific one, retaining only some
favorable slabs, defined there, for which the renormalized model percolates.
The heights, regions, boxes and ladders defined below are functions of
$(X_{k})_{k\ge0}$ alone.

We begin by formally defining all the required objects. Figures~\ref{fig:goodvertex} and~\ref{fig:badvertex}, at the end of this section, illustrate them, except the ladder $\Lambda_{i,n}$, which is pictured in Figure~\ref{fig:pathbadvertex} in Section~\ref{sec:renormZe}. 

\emph{The grid.} The propagation involves the drift $\alpha$, so any relation between a horizontal and a
vertical length involves a fraction of $\alpha$. To avoid small fractions of $\alpha$, we fix an
integer $L\ge1$, which will be taken large and is to be understood as the scale of the grid, and set $\beta:=\alpha L$. Cut $\Ze$ into horizontal \emph{slabs} of length $\ell:=12L$,
the slab $k$ being the set of sites of $\Ze$ with height in
$[k\ell,(k+1)\ell)$, that is $\ell$ consecutive lines. Since $\ell$ is even,
the vertical translation by $(0,k\ell)$ preserves $\Ze$.

Set $\tau_{-1}:=-1$ and
\begin{equation}
  \label{eq:tau}
  \tau_{n}:=\inf\{k>\tau_{n-1}:X_{k}=1\},
  \qquad
  \kappa_{n}:=\tau_{n}-\tau_{n-1}-1,
  \qquad n\ge0 ,
\end{equation}
so that $\tau_{n}$ is the $n$-th retained slab and $\kappa_{n}$ the length of the
run of discarded slabs lying just below it. The renormalized lattice is the lattice $\mathbb{V}=\{(i,n)\in\Z\times\Z_{+}:i+n\ \text{even}\}$
of Theorem~\ref{thm:HilarioSaSanchisTeixeira2026}, with the two oriented edges
$\bigl((i,n),(i\pm1,n+1)\bigr)$ issued from each vertex, and we still write
$\mathbb{V}$ for the resulting oriented graph; the run $\kappa_{n}$
plays the role of the defect $\zeta_{n}$ there, affecting all the vertices of
the floor $n$. An element
$(i,n)$ of $\mathbb{V}$ is only a label, not coordinates in $\Ze$; it represents a region of $\Ze$,
which we also call the vertex $(i,n)$, and the edges are represented by the
\emph{transport boxes} defined below. The positions of these regions and transport boxes are what
we call the \emph{scheme} associated with $(X_{k})_{k\ge0}$.

The index $i$ fixes the $x$-coordinates; the vertex $(i,n)$ is centered on the column
$ig$, where
\begin{equation}
  \label{eq:g}
  g:=\text{the largest even integer}\ \le 10\beta .
\end{equation}
Since $g$ is even, the horizontal translation by $(ig,0)$ preserves $\Ze$ and
$\Pxi$. The $x$-coordinates depend on $i$ alone, not on $(X_{k})_{k\ge0}$, because discarded
slabs are climbed vertically and therefore cost length but no horizontal
displacement.

The index $n$, on the other hand, is not a height of $\Ze$ but a counter: it numbers the
retained slabs, it is the floor of the vertex $(i,n)$. Two heights of $\Ze$ are attached to it,
\begin{equation}
  \label{eq:heights}
  h_{n}:=(\tau_{n-1}+1)\,\ell,
  \qquad
  \tilde h_{n}:=\tau_{n}\,\ell,
  \qquad\text{so that}\qquad
  \tilde h_{n}-h_{n}=\kappa_{n}\,\ell.
\end{equation}
The paths of the floor $n-1$ arrive at the height $h_{n}=\tilde h_{n-1}+\ell$,
those of the floor $n$ leave from the height $\tilde h_{n}$, and the
$\kappa_{n}$ discarded slabs in between have to be climbed. Therefore, we define the vertex $(i,n)$ as the region
\[
  V_{i,n}:=(ig-5\beta,\,h_{n})+R\bigl((10\beta,0),(0,\kappa_{n}\ell+1)\bigr),
\]
of width $10\beta$ and length $\kappa_{n}\ell+1$. If $\kappa_{n}=0$, that is, if
the slab $\tau_{n-1}+1$ is retained, then $h_{n}=\tilde h_{n}$ and
$V_{i,n}$ lies on the single line of height $h_{n}$. If $\kappa_{n}\ge1$, $V_{i,n}$ is a
rectangular box made of the $\kappa_{n}$ discarded slabs
$\tau_{n-1}+1,\dots,\tau_{n}-1$ and of the lowest line of the slab $\tau_{n}$.
Therefore, the bottom and top sides of $V_{i,n}$ lie at the heights $h_{n}$ and $\tilde h_{n}$, and
coincide when $\kappa_{n}=0$. Their midpoints
\[
  z_{i,n}:=(ig,\,h_{n}),
  \qquad
  \tilde z_{i,n}:=(ig,\,\tilde h_{n}),
\]
serve as reference points. Both sides are cut into five segments
$[ig+(2m-5)\beta,\,ig+(2m-3)\beta)$ of width $2\beta$, for $0\le m\le4$. The two outer segments of
the top side carry the \emph{departure intervals}, the two inner segments of the bottom
side carry the \emph{arrival intervals}, up to a small translation towards the center,
and, when $\kappa_{n}\ge1$, the central segment carries the \emph{corridor}, which crosses
from the bottom side to the top side. These three objects are described below, but before doing so, we need to introduce the
transport boxes, which realize the edges and are inherited from
Proposition~\ref{prop:boxsuper}.

\emph{Transport boxes.} For $z\in\R^{2}$ set
\begin{align}
  \label{eq:transport}
  &B^{+}(z):=z+(-5\beta,0)+R\bigl((2\beta,0),(18\beta,18L)\bigr),\\
  &B^{-}(z):=z+(3\beta,0)+R\bigl((2\beta,0),(-18\beta,18L)\bigr), \notag
\end{align}
and the \emph{truncated transport boxes}, obtained by truncating at the length $6L$,
\begin{align}
  \label{eq:transportflat}
  &B^{+,t}(z):=z+(-5\beta,0)+R\bigl((2\beta,0),(6\beta,6L)\bigr),\\
  &B^{-,t}(z):=z+(3\beta,0)+R\bigl((2\beta,0),(-6\beta,6L)\bigr). \notag
\end{align}
Thus $B^{\pm,t}(z)$ is the part of $B^{\pm}(z)$ lying below the relative
height $6L$, and the restriction of a vertical crossing of $B^{\pm}(z)$ to those
lines is a vertical crossing of $B^{\pm,t}(z)$, so that
\begin{equation}
  \label{eq:flatincl}
  C^{\uparrow}\bigl(B^{\pm}(z)\bigr)\subseteq C^{\uparrow}\bigl(B^{\pm,t}(z)\bigr).
\end{equation}
The two edges leaving $(i,n)$ are based at $\tilde z_{i,n}$, and we write
$B^{\pm}_{i,n}:=B^{\pm}(\tilde z_{i,n})$ and likewise $B^{\pm,t}_{i,n}$. Their
bases are $2\beta$ wide and $6\beta$ apart, leaving space between them for the two arrival intervals and, when $\kappa_n\ge 1$, the corridor.

\emph{The departure intervals.}
The bases of the two transport boxes leaving $(i,n)$ are the two outer segments of its
top side,
\begin{equation}
  \label{eq:bases}
  [ig-5\beta,\,ig-3\beta)
  \qquad\text{and}\qquad
  [ig+3\beta,\,ig+5\beta),
\end{equation}
by~\eqref{eq:transport}.

\emph{The arrival intervals.}
The grid is such that a transport box does not arrive where it started relative to $ig$. Indeed, if the arrival interval from $(i-1,n-1)$ (resp. $(i+1,n-1)$) were the departure one to $(i+1,n+1)$ (resp. $(i-1,n+1)$), the transport boxes would be aligned and nothing would force an incoming path to cross an outgoing one. The drift over one slab is $12\beta$ against a column spacing $g\le10\beta$,
so the arriving transport boxes are shifted from the departure intervals of $(i,n)$ by one segment of width $2\beta$ up to the rounding
\begin{equation}
  \label{eq:Delta}
  \Delta:=10\beta-g,
  \qquad\text{so that}\qquad
  0\le\Delta<2
\end{equation}
by~\eqref{eq:g}. The \emph{arrival intervals} of $(i,n)$ are therefore the two inner segments of its bottom side, translated towards
the center by $\Delta$,
\begin{equation}
  \label{eq:arrival}
  S^{+}_{i,n}:=[ig+\Delta-3\beta,\ ig+\Delta-\beta),
  \qquad
  S^{-}_{i,n}:=[ig+\beta-\Delta,\ ig+3\beta-\Delta).
\end{equation}
The two transport boxes arriving then exceed these intervals by $6L$, which is what will let the underlying paths of
$\Ze$ meet each other. The rounding $\Delta$ is needed because $\beta=\alpha L$ is not necessarily an even integer.

\emph{The corridor.}
When $\kappa_{n}\ge1$ the two heights of the floor $n$ differ and a path arriving
at $h_{n}$ must reach $\tilde h_{n}$ before it can leave, across $\kappa_{n}$
slabs carrying no crossing estimate close to one. There is no such corridor when $\kappa_{n}=0$. Denote by
\begin{equation}
  \label{eq:corridor}
  B^{0}_{i,n}:=z_{i,n}-\bigl(\frac{3}{2}w,0\bigr)
    +R\bigl((3w,0),(0,(2\kappa_{n}+1)\,6L)\bigr)
\end{equation}
the corridor of $(i,n)$, a rectangular box of width $3w$ and length
$(2\kappa_{n}+1)\,6L=\kappa_{n}\ell+6L$, centered in the central segment of the vertex: it
starts at the height $h_{n}$, spans the $\kappa_{n}$ discarded slabs, and exceeds
$\tilde h_{n}$ by $6L$, which is necessary for the underlying paths of $\Ze$ to
meet. It occupies the columns
$[ig-3w/2,ig+3w/2)$, so that by~\eqref{eq:arrival} it is separated from
both arrival intervals, each lying strictly on one side of it, as soon as
$3w/2\le\beta-\Delta$. This holds under
\begin{equation}
  \label{eq:wsmall}
  3w+4\le2\beta,
\end{equation}
a condition on the width $w\ge2$, which is fixed just below.

Theorem~\ref{thm:DuminilCopinTassionTeixeira2018} gives crossings for the
rectangular boxes of the form $w_m\times 3m$ and $3w_m\times m$ with the width going to infinity as the
length does. We need more, the length must be unbounded while the width stays
below a fixed fraction of $\beta$, as~\eqref{eq:wsmall} requires. Stacking small rectangular boxes
of a fixed shape decouples the length from the width, at the price of a factor
$\varepsilon$ per box. Unlike in~\cite[Corollary~10]{ArcherEtAl2026}, no tilting is
needed: the critical model has zero edge speed, so a corridor is a rectangular box.

From now on, we fix $w=w_{6L}$, the width given by Theorem~\ref{thm:DuminilCopinTassionTeixeira2018} at the
length $6L$. Both $w\ge2$ and~\eqref{eq:wsmall}
are then conditions on $L$ alone, satisfied for $L$ large since
$\varepsilon(6L)^{2/5}\le w_{6L}\le(6L)^{1-\varepsilon}$ while $\beta=\alpha L$. Since $w$ is now fixed, we drop it from the notation.

To cross the corridor of width $3w$, we use what we call a \emph{ladder}. Its two rails
are the long rectangular boxes $T_{j}$, of width $w$ and length $18L$, which alternate
between the columns $[0,w)$ and $[2w,3w)$; its rungs are the wide rectangular boxes
$W_{j}$, of width $3w$ and length $6L$, which meet the two rails exactly at their
ends. Fix $N=2k+1\ge3$ and a base point $z\in\R^{2}$. For $k=1$ set
\begin{equation}
  \label{eq:lad3}
  \Lambda_{3}(z):=z+R((w,0),(0,18L)),
  \qquad
  \mathrm{Lad}\bigl(\Lambda_{3}(z)\bigr):=C^{\uparrow}\bigl(\Lambda_{3}(z)\bigr),
\end{equation}
and for $k\ge2$ set
\begin{align}
  T_{j}&:=z+(2w\,\ind\{j\ \text{odd}\},\,12Lj)+R((w,0),(0,18L)),
  &&0\le j\le k-1,
  \label{eq:Tj}\\
  W_{j}&:=z+(0,\,12L(j+1))+R((3w,0),(0,6L)),
  &&0\le j\le k-2 ,
  \label{eq:Wj}
\end{align}
\[
  \Lambda_{N}(z):=\bigcup_{j}T_{j}\cup\bigcup_{j}W_{j},
\]
\begin{equation}
  \label{eq:lad}
  \mathrm{Lad}\bigl(\Lambda_{N}(z)\bigr):=
  \bigcap_{0\le j\le k-1}C^{\uparrow}(T_{j})
  \ \cap\
  \bigcap_{\substack{0\le j\le k-2\\ j\ \text{even}}}C^{\rightarrow}(W_{j})
  \ \cap\
  \bigcap_{\substack{0\le j\le k-2\\ j\ \text{odd}}}C^{\leftarrow}(W_{j}),
\end{equation}
the event that the ladder is climbed. The length of a ladder is $6LN$ and its width is $3w$, independent of $N$, so we can define, for $\kappa_n\ge 1$, the ladder of $(i,n)$ as the one filling its corridor,
\begin{equation}
  \label{eq:ladder}
  \Lambda_{i,n}:=\Lambda_{2\kappa_{n}+1}
    \Bigl(z_{i,n}-\bigl(\frac{3}{2}w,0\bigr)\Bigr)\ \subseteq\ B^{0}_{i,n}.
\end{equation}

\begin{figure}[tb]
\centering
\begin{tikzpicture}[x=.95cm,y=2.35cm]
  \def\D{.35}
  \foreach \v/\lab in {0/{$h_{n}=\tilde h_{n}$}, 1/{$\tilde h_{n}+6L$}}
    {\draw[densely dotted,line width=.25pt,gray] (-6.2,\v) -- (6.2,\v);
     \node[left,font=\small] at (-6.4,\v) {\lab};}
  \fill[gray!8] (-4.8+\D,-.3) -- (-2.8+\D,-.3) -- (5+\D,1) -- (3+\D,1) -- cycle;
  \draw[box] (-4.8+\D,-.3) -- (3+\D,1) -- (5+\D,1) -- (-2.8+\D,-.3) -- cycle;
  \fill[gray!8] (2.8-\D,-.3) -- (4.8-\D,-.3) -- (-3-\D,1) -- (-5-\D,1) -- cycle;
  \draw[box] (4.8-\D,-.3) -- (-3-\D,1) -- (-5-\D,1) -- (2.8-\D,-.3) -- cycle;
  \fill[gray!8] (-5,0) -- (-3,0) -- (3,1) -- (1,1) -- cycle;
  \draw[box] (-5,0) -- (1,1) -- (3,1) -- (-3,0);
  \fill[gray!8] (3,0) -- (5,0) -- (-1,1) -- (-3,1) -- cycle;
  \draw[box] (5,0) -- (-1,1) -- (-3,1) -- (3,0);
  \draw[line width=.3pt] (-5.9,0) -- (5.9,0);
  \foreach \x in {-5,-3,-1,1,3,5} \draw[line width=.3pt] (\x,-.025) -- (\x,.025);
  \draw[line width=1.6pt] (-5,0) -- (-3,0);
  \draw[line width=1.6pt] (3,0) -- (5,0);
  \draw[line width=1.6pt,blue!60!black] (-3+\D,0) -- (-1+\D,0);
  \draw[line width=1.6pt,blue!60!black] (1-\D,0) -- (3-\D,0);
  \fill (0,0) circle (1.3pt);
  \node[below,font=\small] at (0,-.03) {$z_{i,n}=\tilde z_{i,n}$};
  \draw[line width=.25pt] (-2.9,.35) -- (-4.4,.60);
  \node[left,font=\small] at (-4.4,.60) {$B^{+}_{i,n}$};
  \draw[line width=.25pt] (2.9,.35) -- (4.4,.60);
  \node[right,font=\small] at (4.4,.60) {$B^{-}_{i,n}$};
  \draw[line width=.25pt] (-4.45,-.3) -- (-5.3,-.3);
  \node[left,font=\small] at (-5.3,-.3) {$B^{+}_{i-1,n-1}$};
  \draw[line width=.25pt] (4.45,-.3) -- (5.3,-.3);
  \node[right,font=\small] at (5.3,-.3) {$B^{-}_{i+1,n-1}$};
  \draw[decorate,decoration={brace,mirror,amplitude=4pt},line width=.3pt]
    (-5,-.46) -- (5,-.46) node[midway,below,yshift=-4pt,font=\small] {$V_{i,n}$};
  \draw[line width=.25pt,blue!60!black] (-2+\D,0) -- (-2+\D,-.13);
  \node[below,font=\small,blue!60!black] at (-2+\D,-.13) {$S^{+}_{i,n}$};
  \draw[line width=.25pt,blue!60!black] (2-\D,0) -- (2-\D,-.13);
  \node[below,font=\small,blue!60!black] at (2-\D,-.13) {$S^{-}_{i,n}$};
  \foreach \x/\lab in {-5/{$ig-5\beta$}, -3/{$ig-3\beta$}, -1/{$ig-\beta$},
                       1/{$ig+\beta$}, 3/{$ig+3\beta$}, 5/{$ig+5\beta$}}
    {\draw[densely dotted,line width=.25pt,gray] (\x,-.025) -- (\x,-.78);
     \node[below,font=\small] at (\x,-.78) {\lab};}
\end{tikzpicture}
\caption{The vertex $(i,n)$ when $\kappa_{n}=0$.}
\label{fig:goodvertex}
\end{figure}
\begin{figure}[tb]
\centering
\begin{tikzpicture}[x=.95cm,y=1.6cm]
  \def\D{.35}
  \def\hw{.45}
  \foreach \v/\lab in {0/{$h_{n}$}, 1/{$h_{n}+6L$}, 2/{$\tilde h_{n}$}, 3/{$\tilde h_{n}+6L$}}
    {\draw[densely dotted,line width=.25pt,gray] (-6.4,\v) -- (6.4,\v);
     \node[left,font=\small] at (-6.6,\v) {\lab};}
  \fill[gray!8] (-3+\D,0) -- (-1+\D,0) -- (5+\D,1) -- (3+\D,1) -- cycle;
  \draw[box] (-3+\D,0) -- (3+\D,1) -- (5+\D,1) -- (-1+\D,0);
  \fill[gray!8] (1-\D,0) -- (3-\D,0) -- (-3-\D,1) -- (-5-\D,1) -- cycle;
  \draw[box] (1-\D,0) -- (-5-\D,1) -- (-3-\D,1) -- (3-\D,0);
  \fill[gray!8] (-5,2) -- (-3,2) -- (3,3) -- (1,3) -- cycle;
  \draw[box] (-5,2) -- (1,3) -- (3,3) -- (-3,2);
  \fill[gray!8] (3,2) -- (5,2) -- (-1,3) -- (-3,3) -- cycle;
  \draw[box] (5,2) -- (-1,3) -- (-3,3) -- (3,2);
  \draw[line width=.3pt] (-5,0) rectangle (5,2);
  \fill[gray!25,fill opacity=.75] (-\hw,0) rectangle (\hw,3);
  \draw[line width=.35pt] (-\hw,0) rectangle (\hw,3);
  \foreach \x in {-5,-3,-1,1,3,5} {\draw[line width=.3pt] (\x,-.04) -- (\x,.04);
                                   \draw[line width=.3pt] (\x,1.96) -- (\x,2.04);}
  \draw[line width=1.6pt,blue!60!black] (-3+\D,0) -- (-1+\D,0);
  \draw[line width=1.6pt,blue!60!black] (1-\D,0) -- (3-\D,0);
  \draw[line width=1.6pt] (-5,2) -- (-3,2);
  \draw[line width=1.6pt] (3,2) -- (5,2);
  \fill (0,0) circle (1.3pt); \fill (0,2) circle (1.3pt);
  \node[below,font=\small] at (0,-.02) {$z_{i,n}$};
  \node[below,font=\small] at (0,1.98) {$\tilde z_{i,n}$};
  \draw[line width=.25pt,blue!60!black] (-1.65,0) -- (-1.65,-.2);
  \node[below,font=\small,blue!60!black] at (-1.65,-.2) {$S^{+}_{i,n}$};
  \draw[line width=.25pt,blue!60!black] (1.65,0) -- (1.65,-.2);
  \node[below,font=\small,blue!60!black] at (1.65,-.2) {$S^{-}_{i,n}$};
  \draw[line width=.25pt] (-2.9,2.35) -- (-4.0,2.52);
  \node[left,font=\small] at (-4.0,2.52) {$B^{+}_{i,n}$};
  \draw[line width=.25pt] (2.9,2.35) -- (4.0,2.52);
  \node[right,font=\small] at (4.0,2.52) {$B^{-}_{i,n}$};
  \draw[line width=.25pt] (\hw,1.55) -- (1.4,1.55);
  \node[right,font=\small] at (1.4,1.55) {$B^{0}_{i,n}$};
  \draw[line width=.25pt] (-5,.6) -- (-5.6,.6);
  \node[left,font=\small] at (-5.6,.6) {$V_{i,n}$};
  \draw[line width=.25pt] (4.0,1) -- (4.0,1.5);
  \node[above,font=\small] at (4.0,1.5) {$B^{+}_{i-1,n-1}$};
  \draw[line width=.25pt] (-4.0,1) -- (-4.0,1.5);
  \node[above,font=\small] at (-4.0,1.5) {$B^{-}_{i+1,n-1}$};
  \draw[cote] (-\hw,3.2) -- (\hw,3.2) node[midway,above,font=\small] {$3w$};
  \foreach \x/\lab in {-5/{$ig-5\beta$}, -3/{$ig-3\beta$}, -1/{$ig-\beta$},
                       1/{$ig+\beta$}, 3/{$ig+3\beta$}, 5/{$ig+5\beta$}}
    {\draw[densely dotted,line width=.25pt,gray] (\x,-.04) -- (\x,-.52);
     \node[below,font=\small] at (\x,-.52) {\lab};}
\end{tikzpicture}
\caption{The vertex $(i,n)$ when $\kappa_{n}\ge 1$, here $\kappa_{n}=1$.}
\label{fig:badvertex}
\end{figure}

\begin{definition}[Open vertices and edges]
\label{def:renorm}
Let $(i,n)\in\mathbb{V}$. The vertex $(i,n)$ is \emph{open} if $\kappa_{n}=0$,
or if $\kappa_{n}\ge1$ and $\mathrm{Lad}(\Lambda_{i,n})$ occurs. Write
$Y_{i,n}:=\ind\{(i,n) \text{ is open}\}$. The edge
$\bigl((i,n),(i+1,n+1)\bigr)$ is \emph{open} if
\begin{equation*}
  Z^{+}_{i,n}:=
  \ind\Bigl\{C^{\uparrow}\bigl(B^{+}_{i,n}\bigr)\cap
             C^{\uparrow}\bigl(B^{-,t}_{i,n}\bigr)\Bigr\}=1,
\end{equation*}
and the edge $\bigl((i,n),(i-1,n+1)\bigr)$ is \emph{open} if
\begin{equation*}
  Z^{-}_{i,n}:=
  \ind\Bigl\{C^{\uparrow}\bigl(B^{-}_{i,n}\bigr)\cap
             C^{\uparrow}\bigl(B^{+,t}_{i,n}\bigr)\Bigr\}=1.
\end{equation*}
An open path in $\mathbb{V}$ is a sequence
$(i_{0},n_{0}),(i_{1},n_{1}),\dots$ of open vertices joined by open edges.
\end{definition}

An edge requires two crossings: the transport box in its own direction, which carries the path to the next vertex, and the truncated transport box in the other direction. The second is what lets the underlying paths in the edges going in the same direction connect when $\kappa_{n}=0$. When
$\kappa_{n}\ge1$ the corridor already connects the incoming paths to the outgoing ones. This is the content of
Proposition~\ref{prop:reconstruction}.

\subsubsection{From the renormalized lattice to \texorpdfstring{$\Ze$}{Ze}}
\label{sec:renormZe}

Every result in this section holds for an arbitrary sequence
$(X_{k})_{k\ge0}$: no probability is involved. The main result of this section is the following.

\begin{proposition}[Reconstruction of an infinite open path]
\label{prop:reconstruction}
For every sequence $(X_{k})_{k\ge0}$, if there is an infinite open path in
$\mathbb{V}$ starting at the floor $0$, then there is an infinite open path in $\Ze$.
\end{proposition}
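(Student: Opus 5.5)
We construct, inductively along the infinite open path $(i_{0},0),(i_{1},1),\dots$ of $\mathbb{V}$, a sequence of open oriented paths of $\Ze$ that can be concatenated. The invariant at floor $n$ is an open path $\gamma_n$ in $\Ze$, starting from a site at height $0$ (or, for $n=0$, from a site of the departure side), and ending at a site of the top side of $V_{i_n,n}$ at height $\tilde h_n$ that lies in the departure interval for the edge actually used, i.e.\ the base of $B^{\pm}_{i_n,n}$ with sign $i_{n+1}-i_n$. The initial site of an open path need not be open, so it suffices to produce arbitrarily long open paths from a common finite set of starting sites at height $0$. Because they are nested extensions, this yields an infinite open path; alternatively we can apply K\"onig's lemma to the finitely many sites of $V_{i_0,0}$ at height $0$.

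\emph{Step 1 (transport).} Suppose $\gamma_n$ ends in the base of $B^{+}_{i_n,n}$. The edge being open gives a vertical crossing $\pi^+$ of $B^{+}_{i_n,n}$ and a crossing $\pi^{-,t}$ of the truncated box $B^{-,t}_{i_n,n}$. At height $\tilde h_n$, $\pi^{-,t}$ lies to the right of the endpoint of $\gamma_n$. Since $B^+$ and $B^{-,t}$ cross each other within relative height $6L$ (slopes $\pm\alpha$, bases $6\beta$ apart, horizontal drift $6\beta$ each), at relative height $6L$ the path $\pi^{-,t}$ is to the left of $\pi^+$. When $\kappa_n=0$, the path $\gamma_n$ arrived through the previous edge's box $B^{\pm}_{i_{n-1},n-1}$, which extends $6L$ above $h_n=\tilde h_n$, and its trace there is sandwiched as in Lemma~\ref{lem:planar}. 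The geometric core is then a coordinate check from \eqref{eq:transport}, \eqref{eq:arrival} and \eqref{eq:Delta}: the top of $B^{+}_{i,n}$ occupies $[ig+13\beta,ig+15\beta)$ at height $\tilde h_n+18L$, which equals $S^{+}_{i+1,n+1}$ translated by the residual drift. By Lemma~\ref{lem:planar}(i) applied to the relevant pair among $\gamma_n$, $\pi^{-,t}$ and $\pi^{+}$, we splice $\gamma_n$ onto $\pi^+$. We then continue $\pi^+$ to height $h_{n+1}=\tilde h_n+\ell$. It ends in the arrival interval $S^{+}_{i_{n+1},n+1}$, and the box $B^{+}_{i_n,n}$ continues $6L$ above that height.

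\emph{Step 2 (vertex).} If $\kappa_{n+1}=0$, the arriving path is already at $\tilde h_{n+1}$, and Step~1 at the next floor handles the splice. Here the crossing $\pi^{\mp,t}$ required by the outgoing edge is exactly what lets an incoming path of either direction meet the outgoing one. This is the case analysis of incoming and outgoing directions that must be checked: same direction versus opposite direction. If $\kappa_{n+1}\ge1$, the open vertex gives $\mathrm{Lad}(\Lambda_{i_{n+1},n+1})$. Consecutive rail crossings meet through the rung crossings by Lemma~\ref{lem:planar}(i): the horizontal crossing of $W_j$ goes from one rail's column to the other's, and each rail crosses the full height of $W_j$. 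This yields a single open path climbing the corridor from $h_{n+1}$ to $\tilde h_{n+1}+6L$ inside the columns $[ig-3w/2,ig+3w/2)$. The incoming path ends strictly to one side of the corridor by \eqref{eq:wsmall}, and the arriving box bends towards and past the center within $6L$. Hence the incoming path and the bottom rail cross, and we splice. At the top, the outgoing $\pi^{\pm}$ starts on an outer segment and crosses the central column within $6L$. Alternatively, the ladder's top crossing lies between $\pi^{+}$ and $\pi^{-,t}$, whose meeting forces an intersection by Lemma~\ref{lem:planar}(ii). This produces $\gamma_{n+1}$ ending in the correct departure interval.

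The main obstacle is Step~2 together with the coordinate bookkeeping in Step~1. We must verify, for all four combinations of incoming and outgoing directions and both values of $\kappa$, that the relevant paths are genuinely in the ``crossed'' order at two heights. This has to hold at the rounding level $\Delta<2$ and with real translates of boxes. We plan to do this by recording, for each box, the $x$-interval it occupies at heights $\tilde h_n$, $\tilde h_n+6L$ and $h_{n+1}+6L$. Each splice then becomes an instance of Lemma~\ref{lem:planar}(i) or (ii).
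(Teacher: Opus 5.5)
Your overall plan is the paper's: concatenate crossings along the $\mathbb{V}$-path, relay through the ladder path of Lemma~\ref{lem:ladderpath} when $\kappa_n\ge1$, and use the truncated opposite crossing to let an incoming path reach the outgoing one. There are two problems. The induction invariant you state cannot be maintained, and the splice at a vertex, which is the whole content of the proof, is left as an unchecked case analysis.

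\emph{The invariant.} At height $\tilde h_n$ your path is never in a departure interval. If $\kappa_n=0$, it arrives at $h_n=\tilde h_n$ inside $S^{d_{n-1}}_{i_n,n}$, an inner segment disjoint from both outer segments~\eqref{eq:bases}. If $\kappa_n\ge1$, it lies in the corridor columns $[x_n-3w/2,x_n+3w/2)$. Even if it were there, a path stopping at $\tilde h_n$ cannot be joined to the crossing of $B^{d_n}_{i_n,n}$, which starts at some other site of the base: every junction happens strictly above $\tilde h_n$. Your Step~1 in fact silently uses the part of the incoming path above $h_n$, which your invariant has discarded. The invariant must keep that tail, as in~\eqref{eq:induction}. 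The path ends with the restriction of the incoming crossing $\gamma^{d_{n-1}}_{n-1}$ to $[h_n,h_n+6L-1]$, with $\gamma^{d_{n-1}}_{n-1}(h_n)\in S^{d_{n-1}}_{i_n,n}$. When $\kappa_n\ge1$, the relay $\Gamma_n$ is kept up to $\tilde h_n+6L-1$, which is why the corridor overshoots $\tilde h_n$ by $6L$.

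\emph{The splice.} ``Lemma~\ref{lem:planar}(i) applied to the relevant pair'' does not suffice as stated. Take $\kappa_n=0$ and $d_{n-1}=d_n=+$. At relative height $t$ above $\tilde h_n$, the incoming box occupies $[x_n-3\beta+\Delta,\,x_n-\beta+\Delta)+\alpha t$ and the outgoing one $[x_n-5\beta,\,x_n-3\beta)+\alpha t$. The two crossings run parallel and need not meet. Relaying through $\gamma^-_n$ is legitimate only if the incoming path meets $\gamma^-_n$ no later than $\gamma^-_n$ meets $\gamma^+_n$, and you never establish this ordering.

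The paper obtains it from Lemma~\ref{lem:planar}(ii), uniformly in the four direction combinations and in $\kappa_n$. Let $\pi$ be the portion of the path at heights $\ge\tilde h_n$. Then
\[
  \gamma^+_n(\tilde h_n)<x_n-3\beta\le\pi(\tilde h_n)<x_n+3\beta\le\gamma^-_n(\tilde h_n),
  \qquad
  \gamma^-_n(\tilde h_n+5L)<x_n\le\gamma^+_n(\tilde h_n+5L).
\]
So (i) gives a height $s'$ where $\gamma^{\pm}_n$ meet, and (ii) gives a height $s''\le s'$ where $\pi$ meets one of them. You then follow $\pi$ to $s''$, if necessary $\gamma^{-d_n}_n$ up to $s'$, and then $\gamma^{d_n}_n$.

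For $\kappa_n\ge1$, your direct use of (i) at the top does work, since $3w/2\le\beta-2<\beta-\alpha$. For $\kappa_n=0$, however, the sandwich argument is needed. Finally, the floor-$0$ vertex need not be crossed, and K\"onig's lemma is unnecessary: the path can simply start with $\gamma^{d_0}_0$ at $\tilde h_0$.
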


We first state a lemma which says that a climbed
ladder does contain an open path on $\Ze$ that crosses the associated corridor.

\begin{lemma}
\label{lem:ladderpath}
Let $N=2k+1\ge3$ be odd and let $z\in\R\times[0,\infty)$. On the event
$\mathrm{Lad}(\Lambda_{N}(z))$ there is an open path $\Gamma$ contained in
$\Lambda_{N}(z)$, from a site at its lowest height to a site at its highest height,
these being $\lceil z_{2}\rceil$ and $\lceil z_{2}\rceil+6LN-1$. In particular
$C^{\uparrow}(z+R((3w,0),(0,6LN)))$ occurs.
\end{lemma}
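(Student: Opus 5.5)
The plan is to build $\Gamma$ by induction along the ladder, gluing the vertical crossings of consecutive rails through the horizontal crossings of the rungs, with Lemma~\ref{lem:planar}(i) doing all the gluing. For $k=1$ there is nothing to prove: $\mathrm{Lad}(\Lambda_{3}(z))=C^{\uparrow}(\Lambda_{3}(z))$ gives an open path in $\Lambda_3(z)$ from its lowest line to its highest line, that is, over $18L=6L\cdot3$ lines. The inclusion $\Lambda_{3}(z)\subseteq z+R((3w,0),(0,18L))$ then gives the last assertion.

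For $k\ge2$, the first step is to fix the heights relative to $\lceil z_{2}\rceil$. The rail $T_{j}$ occupies the relative heights $[12Lj,\,12Lj+18L)$, and the rung $W_{j}$ occupies $[12L(j+1),\,12L(j+1)+6L)$. So the rung $W_{j}$ lies exactly in the height range where $T_{j}$ (its top third) and $T_{j+1}$ (its bottom third) overlap. Since all the boxes are translates by the same $z$, at every height of $W_{j}$ the sites of $T_{j}$ and $T_{j+1}$ lie within the columns of $W_{j}$. More precisely, for $j$ even, $T_j$ sits in the left columns $[0,w)$ and $T_{j+1}$ in the right columns $[2w,3w)$, with a gap of width $w\ge2$ between them, and the roles are swapped for $j$ odd. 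On $\mathrm{Lad}(\Lambda_N(z))$ we choose an open vertical crossing $\gamma_{j}$ of each $T_{j}$, defined at every height of $T_j$, and, for $j$ even, an open crossing $\pi_{j}$ of $W_{j}$ from $\partial^{\leftarrow}W_{j}$ to $\partial^{\rightarrow}W_{j}$, defined on some height interval $[a_j,b_j]$ inside the heights of $W_j$.

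The key gluing step, for $j$ even, runs as follows. Since $\pi_{j}$ starts at the leftmost site of $W_{j}$ at height $a_j$, we have $\pi_{j}(a_j)\le\gamma_{j+1}(a_j)$. Since it ends at the rightmost site at height $b_j$, we have $\pi_{j}(b_j)\ge\gamma_{j+1}(b_j)$. Lemma~\ref{lem:planar}(i) then gives a first height $s'\in[a_j,b_j]$ with $\pi_{j}(s')=\gamma_{j+1}(s')$. At that height, $\pi_j(s')$ lies in the right rail's columns, hence $\pi_{j}(s')>\gamma_{j}(s')$. On the other hand $\pi_{j}(a_j)\le\gamma_{j}(a_j)$, so applying Lemma~\ref{lem:planar}(i) again on $[a_j,s']$ yields $s\le s'$ with $\pi_{j}(s)=\gamma_{j}(s)$.

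We then follow $\gamma_{j}$ up to height $s$, then $\pi_{j}$ from $s$ to $s'$, then $\gamma_{j+1}$ from $s'$ onward. This concatenation is an open oriented path staying in $T_{j}\cup W_{j}\cup T_{j+1}$. The case $j$ odd is the mirror image, using $C^{\leftarrow}(W_{j})$ (or the reflection noted in Section~\ref{sec:badtimes}). Iterating from $j=0$ to $j=k-2$ yields a path $\Gamma$ that starts at the bottom of $T_{0}$, at height $\lceil z_{2}\rceil$, and ends at the top of $T_{k-1}$. That endpoint is at relative height $12L(k-1)+18L-1=6LN-1$, and the path lies inside $\Lambda_N(z)\subseteq z+R((3w,0),(0,6LN))$.

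The main obstacle I expect is bookkeeping rather than ideas. One point is checking that, after the real translation by $z$ and intersection with $\Ze$, the lowest and highest lines of each rail and rung match up exactly as claimed. The other is checking the comparison between the leftmost or rightmost site of $W_{j}$ at a given height and the sites of $\gamma_{j},\gamma_{j+1}$, which is where the common translation by $z$ and the parity of lines enter. These are the ``minor adaptations'' of Section~\ref{sec:geometry}, which I would only sketch.
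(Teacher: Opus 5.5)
Your proposal is correct and follows essentially the same route as the paper. You fix crossings of every rail and rung once and for all, glue consecutive rails through each rung with two applications of Lemma~\ref{lem:planar}(i), and iterate over $0\le j\le k-2$. The only difference is cosmetic: you order the two meeting heights by first taking the meeting height $s'$ with $\gamma_{j+1}$ on $[a_j,b_j]$ and then using the disjointness of the rails' columns to find the meeting with $\gamma_j$ on $[a_j,s']$, whereas the paper uses the heights $s_1<s_2$ at which the rung crossing first hits the columns $w$ and $2w$ and applies the lemma on $[a,s_1]$ and $[s_2,b]$.
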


\begin{proof}
The construction is purely geometric, so it holds at any position and we take
$z=0$. For $k=1$ there is nothing to prove, so assume $k\ge2$.

On the event $\mathrm{Lad}(\Lambda_{N}(0))$, choose once and for all, by an
arbitrary procedure, a crossing path $\gamma_{j}$ of each $T_{j}$,
$0\le j\le k-1$, and a crossing path $\sigma_{j}$ of each $W_{j}$,
$0\le j\le k-2$. Fix $j\le k-2$, and denote by $a$ and $b$ the heights of the two
ends of $\sigma_{j}$. Both $\gamma_{j}$ and $\gamma_{j+1}$ are defined at
every height of $W_{j}$, these being the top $6L$ lines of $T_{j}$ and the bottom
$6L$ lines of $T_{j+1}$, thus they are in particular defined on $[a,b]$.

Suppose $j$ is even, so that $T_{j}$ occupies the columns $[0,w)$ and $T_{j+1}$
the columns $[2w,3w)$, while $W_{j}$ occupies $[0,3w)$. Being an open path of
$\Ze$, $\sigma_j$ moves by $\pm1$ at each height, so it takes every integer value
between $\sigma_j(a) \in \{0,1\}$ and $\sigma_j(b) \in \{3w-2,3w-1\}$, depending
on the parity of the height. In particular the heights
\[
    s_{1}:=\min\{h\in[a,b]:\sigma_j(h)=w\},
  \qquad
  s_{2}:=\min\{h\in[a,b]:\sigma_j(h)=2w\}
\]
are well defined, and $s_{1}<s_{2}$ since $\sigma_j$ reaches $2w$ only after
reaching $w$. By construction, the rectangular boxes $W_{j}$ and $T_{j}$ have the same
leftmost column and the rectangular boxes $W_{j}$ and $T_{j+1}$ the same rightmost one,
thus
\[
  \sigma_j(a)\le\gamma_{j}(a),
  \qquad
    \sigma_j(s_{1})=w\ge\gamma_{j}(s_{1}),
\]
\[
  \sigma_j(s_{2})=2w\le\gamma_{j+1}(s_{2}),
  \qquad
  \sigma_j(b)\ge\gamma_{j+1}(b).
\]
Lemma~\ref{lem:planar}(i), applied on $[a,s_{1}]$ to $\sigma_j$ and
$\gamma_{j}$, then on $[s_{2},b]$ to $\sigma_j$ and $\gamma_{j+1}$, gives
heights $s\in[a,s_{1}]$ and $s'\in[s_{2},b]$ with
$\sigma_j(s)=\gamma_{j}(s)$ and $\sigma_j(s')=\gamma_{j+1}(s')$. As
$s\le s_{1}<s_{2}\le s'$, following $\gamma_{j}$ up to the height
$s$, then $\sigma_j$ up to the height $s'$, then $\gamma_{j+1}$, gives an
open path contained in $T_{j}\cup W_{j}\cup T_{j+1}$ which crosses it from the
bottom line of $T_{j}$ to the top line of $T_{j+1}$. If $j$ is odd, the same
argument applies in the opposite direction. Iterating on $0 \le j \le k-2$ gives
$\Gamma$.

By~\eqref{eq:Tj} and~\eqref{eq:Wj} the rails occupy the columns $[0,w)$ or
$[2w,3w)$ and the rungs the columns $[0,3w)$, so that
$\Lambda_{N}(z)\subseteq z+R((3w,0),(0,6LN))$, both sets having the same lowest
and highest lines. The last assertion follows.
\end{proof}

\begin{figure}[tb]
\centering
\begin{tikzpicture}[x=.40cm,y=.21cm]
    \draw[box,fill=gray!6] (-8.75,-6) -- (3.25,6) -- (5.25,6) -- (-6.75,-6) -- cycle;
  \draw[box,fill=gray!6] (-5,0) -- (13,18) -- (15,18) -- (-3,0) -- cycle;
  \draw[box,fill=gray!6] (3,0) -- (-3,6) -- (-1,6) -- (5,0) -- cycle;
  \foreach \v/\lab in {0/{$h_{n}=\tilde h_{n}$}, 6/{$h_{n}+6L$}, 12/{$h_{n+1}$}, 18/{$h_{n}+18L$}}
    {\draw[densely dotted,line width=.25pt,gray] (-9.6,\v) -- (15.4,\v);
     \node[left,font=\small] at (-9.8,\v) {\lab};}
    \draw[opath] plot coordinates {
      (-7.802,-6.000) (-7.137,-5.400) (-6.070,-4.800) (-5.508,-4.200)
      (-4.899,-3.600) (-4.203,-3.000) (-3.950,-2.400) (-3.337,-1.800)
      (-2.594,-1.200) (-1.672,-0.600) (-1.518,-0.000) (-1.134,0.600)
      (-0.985,1.200) (-0.044,1.800) (0.769,2.400) (1.000,3.000) (1.995,3.600)
      (3.106,4.200) (3.750,4.800) (4.350,5.400) (4.573,6.000)};
  \draw[opath] plot coordinates {
      (-3.496,0.000) (-3.292,0.600) (-3.216,1.200) (-2.067,1.800)
      (-1.814,2.400) (-1.632,3.000) (-0.865,3.600) (0.000,4.000) (0.594,4.800)
      (0.898,5.400) (2.004,6.000) (2.405,6.600) (3.117,7.200) (4.192,7.800)
      (4.996,8.400) (5.700,9.000) (6.300,9.600) (6.403,10.200) (7.423,10.800)
      (8.100,11.400) (8.492,12.000) (8.751,12.600) (9.736,13.200)
      (10.427,13.800) (11.100,14.400) (11.700,15.000) (12.300,15.600)
      (12.900,16.200) (13.500,16.800) (14.100,17.400) (14.434,18.000)};
  \draw[opath] plot coordinates {
      (4.077,0.000) (3.623,0.600) (3.372,1.200) (2.527,1.800) (2.094,2.400)
      (1.000,3.000) (1.100,3.600) (0.000,4.000) (-0.278,4.800) (-0.749,5.400)
      (-1.345,6.000)};
  \node[meet] at (1,3) {}; \node[meet] at (0,4) {};
    \draw[line width=.25pt] (-5.25,-4.5) -- (-3.0,-5.7);
  \node[right,font=\small] at (-3.0,-5.7) {$B^{+}_{i-1,n-1}$};
  \draw[line width=.25pt] (10,15) -- (7.2,16.6);
  \node[left,font=\small] at (7.2,16.6) {$B^{+}_{i,n}$};
  \draw[line width=.25pt] (-2.4,5.4) -- (-5.0,6.6);
  \node[left,font=\small] at (-5.0,6.6) {$B^{-,t}_{i,n}$};
  \fill (0,0) circle (1.3pt); \fill (10,12) circle (1.3pt);
  \node[below,font=\small] at (0,-.75) {$z_{i,n}=\tilde z_{i,n}$};
  \node[below,font=\small] at (10,11.4) {$z_{i+1,n+1}$};
\end{tikzpicture}
\caption{A path crossing the vertex $(i,n)$ with $\kappa_{n}=0$, coming from $(i-1,n-1)$ and going to $(i+1,n+1)$.}
\label{fig:pathgoodvertex}
\end{figure}

\begin{figure}[tb]
\centering
\begin{tikzpicture}[x=.40cm,y=.120cm]
  \draw[box,fill=gray!6] (-9,-6) -- (3,6) -- (5,6) -- (-7,-6) -- cycle;
  \draw[box,fill=gray!4] (-.9,0) rectangle (.9,42);
  \draw[ladder,fill=gray!18] (-.9,0) rectangle (-.3,18);
  \draw[ladder,fill=gray!18] (.3,12) rectangle (.9,30);
  \draw[ladder,fill=gray!18] (-.9,24) rectangle (-.3,42);
  \draw[ladder,fill=gray!18] (-.9,12) rectangle (.9,18);
  \draw[ladder,fill=gray!18] (-.9,24) rectangle (.9,30);
  \draw[densely dotted,line width=.35pt] (-.3,12) -- (-.3,18);
  \draw[densely dotted,line width=.35pt] (.3,12) -- (.3,18);
  \draw[densely dotted,line width=.35pt] (-.3,24) -- (-.3,30);
  \draw[densely dotted,line width=.35pt] (.3,24) -- (.3,30);
    \draw[box,dashed] (-5,36) -- (1,42) -- (3,42) -- (-3,36) -- cycle;
  \draw[box,dashed] (3,36) -- (-3,42) -- (-1,42) -- (5,36) -- cycle;
  \foreach \v/\lab in {0/{$h_{n}$}, 6/{$h_{n}+6L$}, 36/{$\tilde h_{n}$}, 42/{$\tilde h_{n}+6L$}}
    {\draw[densely dotted,line width=.25pt,gray] (-9.6,\v) -- (5.6,\v);
     \node[left,font=\small] at (-9.8,\v) {\lab};}
  \draw[opath] plot coordinates {
      (-8.101,-6.000) (-7.454,-5.400) (-6.456,-4.800) (-6.212,-4.200)
      (-5.914,-3.600) (-5.700,-3.000) (-5.100,-2.400) (-4.500,-1.800)
      (-3.900,-1.200) (-3.285,-0.600) (-2.146,-0.000) (-1.334,0.600)
      (-0.867,1.200) (-0.600,1.800) (0.439,2.400) (0.847,3.000) (0.978,3.600)
      (1.954,4.200) (2.683,4.800) (2.856,5.400) (3.300,6.000)};
  \draw[opath] plot coordinates {
      (-0.682,0.000) (-0.600,1.800) (-0.662,3.200) (-0.614,4.800)
      (-0.546,6.400) (-0.496,8.000) (-0.536,9.600) (-0.566,11.200)
      (-0.521,12.800) (-0.100,15.000) (0.250,15.800) (0.600,16.600)
      (0.514,17.600) (0.501,19.200) (0.450,20.800) (0.439,22.400)
      (0.391,24.000) (0.463,25.600) (0.100,27.000) (-0.250,27.800)
      (-0.600,28.600) (-0.540,29.600) (-0.575,31.200) (-0.627,32.800)
      (-0.562,34.400) (-0.639,36.000) (-0.644,37.600) (-0.633,39.200)
      (-0.610,40.800) (-0.622,42.000)};
  \node[meet] at (-.6,1.8) {};
  \fill (0,0) circle (1.3pt);
  \fill (0,36) circle (1.3pt);
  \node[left,font=\small] at (-1,34.3) {$\tilde z_{i,n}$};
  \draw[line width=.25pt] (-5,-4) -- (-2.8,-5.3);
  \node[right,font=\small] at (-2.8,-5.3) {$B^{+}_{i-1,n-1}$};
  \draw[line width=.25pt] (-.9,8) -- (-2.6,8);
  \node[left,font=\small] at (-2.6,8) {$B^{0}_{i,n}$};
  \node[right,font=\small] at (1.2,9) {$T_{0}$};
  \node[right,font=\small] at (1.2,15) {$W_{0}$};
  \node[right,font=\small] at (1.2,21) {$T_{1}$};
  \node[right,font=\small] at (1.2,27) {$W_{1}$};
  \node[right,font=\small] at (1.2,34) {$T_{2}$};
  \node[below,font=\small] at (0,-.80) {$z_{i,n}$};
  \node[right,font=\small] at (2.6,40.5) {$B^{\pm}_{i,n}$};
\end{tikzpicture}
\caption{A path crossing the vertex $(i,n)$ with $\kappa_{n}\ge1$, here $\kappa_{n}=3$, coming from $(i-1,n-1)$ and going to $(i+1,n+1)$.}
\label{fig:pathbadvertex}
\end{figure}

We can now prove Proposition~\ref{prop:reconstruction}. Figures~\ref{fig:pathgoodvertex} and~\ref{fig:pathbadvertex} illustrate the proof.

\begin{proof}[Proof of Proposition~\ref{prop:reconstruction}]
Let $(i_{0},0),(i_{1},1),\dots$ be an infinite open path of $\mathbb{V}$, set
$x_{n}:=i_{n}g$ and let $d_{n}\in\{\pm1\}$ be the direction of the edge $\bigl((i_{n},n),(i_{n+1},n+1)\bigr)$, so that $x_{n+1}=x_{n}+d_{n}g$.
In the following we use the abuse of notation $d_{n}\in\{-,+\}$. By Definition~\ref{def:renorm}, $Z^{d_{n}}_{i_{n},n}=1$ gives an open
path $\gamma^{d_{n}}_{n}$ crossing $B^{d_{n}}_{i_{n},n}$ and
an open path $\gamma^{-d_{n}}_{n}$ crossing
$B^{-d_{n},t}_{i_{n},n}$, both defined on
$[\tilde h_{n},\tilde h_{n}+6L-1]$ and the first up to $\tilde h_{n}+18L-1$;
and, when $\kappa_{n}\ge1$, Lemma~\ref{lem:ladderpath}, applied with $z=z_{i_n,n}-(3w/2,0)$ and $N=2\kappa_{n}+1$, gives an open path
$\Gamma_{n}\subseteq B^{0}_{i_{n},n}$ defined on
$[h_{n},\tilde h_{n}+6L-1]$. If several such paths exist, we choose them by an
arbitrary procedure.

We construct inductively, for $n\ge1$, a finite open path of $\Ze$ extending the
previous one and ending with the restriction of
$\gamma^{d_{n-1}}_{n-1}$ to $[h_{n},h_{n}+6L-1]$, with
\begin{equation}
  \label{eq:induction}
  \gamma^{d_{n-1}}_{n-1}(h_{n})\in S^{d_{n-1}}_{i_{n},n}.
\end{equation}

Their union is an infinite open path of $\Ze$. The step rests on the drift of a
transport box over $5L$ lines, namely $5\beta$ by~\eqref{eq:transport}: it is the
half-width of a vertex, so it carries a box across the whole vertex, and it is
achieved below the truncation height $6L$.

Fix $n\ge1$, assume~\eqref{eq:induction} and write $d:=d_{n-1}$,
$\gamma:=\gamma^{d}_{n-1}$. Assume first $\kappa_{n}\ge1$, so that
$\gamma$ stops at $h_{n}+6L-1<\tilde h_{n}$ and must be relayed; see
Figure~\ref{fig:pathbadvertex}. By
\eqref{eq:wsmall} and~\eqref{eq:Delta}, $3w\le2\beta-4<2\beta-2\Delta$, so
$3w/2<\beta-\Delta$ and, by~\eqref{eq:arrival} and~\eqref{eq:corridor},
\[
  d\bigl(\gamma(h_{n})-x_{n}\bigr)\le-(\beta-\Delta)<-\frac{3}{2}w
  \le d\bigl(\Gamma_{n}(h_{n})-x_{n}\bigr).
\]
By~\eqref{eq:transport} the section of $B^{d}_{i_{n-1},n-1}$ at the
relative height $12L+5L\le18L-1$ is $S^{d}_{i_{n},n}$ translated by
$5d\beta$, so
\[
  d\bigl(\gamma(h_{n}+5L)-x_{n}\bigr)\ge2\beta>\frac{3}{2}w
  \ge d\bigl(\Gamma_{n}(h_{n}+5L)-x_{n}\bigr).
\]
Both paths are defined on $[h_{n},h_{n}+5L]$, since $5L\le6L-1$ and
$\tilde h_{n}+6L-1\ge h_{n}+18L-1$, so Lemma~\ref{lem:planar}(i) gives a height
$s\le h_{n}+5L<\tilde h_{n}$ at which $\gamma$ and $\Gamma_{n}$ share a site.
Following $\gamma$ up to $s$ and $\Gamma_{n}$ afterwards extends the path up
to $\tilde h_{n}+6L-1$. If instead $\kappa_{n}=0$, then $h_{n}=\tilde h_{n}$ and there is nothing to
relay; see Figure~\ref{fig:pathgoodvertex}. In both cases, write $\pi$ for the portion lying at
the heights $\ge\tilde h_{n}$: it is defined on
$[\tilde h_{n},\tilde h_{n}+5L]$ and
$\pi(\tilde h_{n})\in[x_{n}-3\beta,x_{n}+3\beta)$, being in
$(x_{n}-\beta,x_{n}+\beta)$ by~\eqref{eq:corridor} in the first case and in
$S^{d}_{i_{n},n}$ in the second. Hence, by~\eqref{eq:bases},
\[
  \gamma^{+}_{n}(\tilde h_{n})<x_{n}-3\beta\le\pi(\tilde h_{n})<x_{n}+3\beta
  \le\gamma^{-}_{n}(\tilde h_{n}),
\]
while at the height $\tilde h_{n}+5L$, where both $\gamma^{\pm}_{n}$ are defined
since $5L\le6L-1$, the sections of $B^{\pm}_{i_{n},n}$ have drifted by
$5\beta$, so that
\[
  \gamma^{-}_{n}(\tilde h_{n}+5L)<x_{n}\le\gamma^{+}_{n}(\tilde h_{n}+5L),
\]
the strict inequality by the half-open convention. Lemma~\ref{lem:planar}(i)
gives $s'\in(\tilde h_{n},\tilde h_{n}+5L]$ at which
$\gamma^{+}_{n}$ and $\gamma^{-}_{n}$ share a site, and
Lemma~\ref{lem:planar}(ii) a height $s''\le s'$ at which $\pi$ shares
a site with one of them. If $\pi$ meets $\gamma^{d_n}_{n}$ at $s''$, we follow $\pi$ up to $s''$
and then $\gamma^{d_n}_{n}$. Otherwise it meets $\gamma^{-d_n}_{n}$, and we
follow $\pi$ up to $s''$, then $\gamma^{-d_n}_{n}$ up to $s'$, then
$\gamma^{d_n}_{n}$. That transport box is not truncated, so the resulting path is defined up to $\tilde h_{n}+18L-1=h_{n+1}+6L-1$, and at
$h_{n+1}=\tilde h_{n}+12L$ it sits in $S^{d_n}_{i_{n+1},n+1}$ by
\eqref{eq:arrival}. This is~\eqref{eq:induction} at $n+1$.

There is no need to cross the vertex $(i_{0},0)$: the path
$\gamma^{d_{0}}_{0}$ is defined on $[\tilde h_{0},h_{1}+6L-1]$ and
satisfies~\eqref{eq:induction} at $n=1$ by~\eqref{eq:arrival}.
\end{proof}

\subsubsection{Percolation of the renormalized model}
\label{sec:renormperc}

\begin{proposition}[Percolation of the renormalized model]
\label{prop:renormperc}
For $L$ large enough, depending on $\delta$ only, there is a sequence
$(X_{k})_{k\ge0}$ of $\{0,1\}$-valued random variables such that the scheme
associated with $(X_{k})_{k\ge0}$ as in Section~\ref{sec:renormlattice}
satisfies
\[
  \PP\bigl(\text{there is an infinite open path of }\mathbb{V}
  \text{ starting at }(0,0)\bigr)\ >\ 0,
\]
the vertices and edges of $\mathbb{V}$ being
open as in Definition~\ref{def:renorm}.
\end{proposition}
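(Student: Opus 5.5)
We will choose the selection procedure from the environment alone: declare slab $k$ \emph{favorable}, and set $X_{k}:=1$, if the environment restricted to the heights $[k\ell,k\ell+18L)$ (the slab and the one and a half following slabs over which transport boxes based in it extend) is such that
\[
  \Pxi\bigl(C^{\uparrow}(B^{\pm}(z))\bigr)\ \ge\ 1-\eta
\]
for every base point $z$ at height $k\ell$, with $\eta$ small to be fixed. The quenched crossing probability depends only on $\xi$ on the heights the box meets, and by horizontal translation invariance of $\Pxi$ only finitely many translates need be considered. Since the environment is i.i.d.\ and the windows of consecutive slabs overlap only with bounded range, $(X_{k})$ is a stationary finite-range dependent sequence. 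By Proposition~\ref{prop:boxsuper}, applied with $n=18L$, $\rho=1/9$ and tilt $\alpha n=18\beta$, together with Markov's inequality on $1-\Pxi(C^{\uparrow})$, we get $\PP(X_{k}=0)\le\eta^{-1}\varepsilon_{L}$ with $\varepsilon_{L}\to0$. Hence the runs $\kappa_{n}$ of discarded slabs are dominated by geometric variables of parameter $\theta(L)\to0$. Since the favorable slabs extend beyond their own slab, we will only use $X_{k}$ for every other slab, or accept $18L$ dependence and thin, to obtain independence. Infinitely many ones follow from the ergodic theorem.

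Next we will bound the conditional law of the renormalized variables given $\xi$. Fix $\xi$, hence $(X_{k})$ and the scheme. For a vertex with $\kappa_{n}\ge1$, the ladder $\Lambda_{i,n}$ has $\kappa_{n}+1$ rails of length $18L$ and $\kappa_{n}$ rungs of length $6L$. By Corollary~\ref{cor:DuminilCopinTassionTeixeira2018quenched} with $n=6L$ and $w=w_{6L}$, and by FKG under $\Pxi$ (the variables are independent and all events are increasing), we get
\[
  \Pxi\bigl(\mathrm{Lad}(\Lambda_{i,n})\bigr)\ \ge\ \varepsilon^{2\kappa_{n}+1}.
\]
For edges, the transport box $B^{\pm}_{i,n}$ is based at height $\tilde h_{n}=\tau_{n}\ell$ in a retained slab, so $\Pxi(Z^{\pm}_{i,n}=1)\ge1-2\eta$ by the union bound, using~\eqref{eq:flatincl}. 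Then we will compare with Theorem~\ref{thm:HilarioSaSanchisTeixeira2026}. The main obstacle is that the vertex variables and edge variables are not independent across neighbours: transport boxes from the same floor overlap horizontally, boxes from floor $n$ reach into floor $n+1$ up to height $\tilde h_{n}+18L$, and the corridor overlaps arriving boxes. We will handle this by a finite-range domination: all events are increasing and depend on boundedly many neighbouring regions at horizontal distance $O(1)$ in units of $g$, vertically within one floor. Liggett--Schonmann--Stacey then yields domination by product measures of density close to $1$ for the edges (as $\eta\to0$). For the vertices, we charge the factor $\varepsilon^{2\kappa_{n}+1}$ to the two outgoing edges, declaring each edge from $(i,n)$ open with probability at least $p^{1+\zeta_{n}}$. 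Here $\zeta_{n}$ is a geometric variable dominating $\kappa_{n}$ scaled by $\lceil\log\varepsilon^{-3}/\log p^{-1}\rceil$, still geometric with small parameter when $\theta(L)$ is small.

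Finally we will choose parameters. Take $c$ from Theorem~\ref{thm:HilarioSaSanchisTeixeira2026}, then $\eta$ so that the dominated edge density exceeds $1-c$. Next take $L$ large, depending on $\delta$ through $\alpha(\delta)$ and the $n_{0}$ of Proposition~\ref{prop:boxsuper}, so that the effective defect parameter satisfies $\theta\le c$ and so that~\eqref{eq:wsmall} holds. The theorem gives an infinite open path from $(0,0)$ with positive $\Pxi$-probability for almost every $\zeta$. Integrating over $\xi$, and coupling $\zeta$ with $\kappa$ monotonically, gives the claimed positive $\PP$-probability. Monotonicity of the renormalized open-path event in the vertex and edge variables transfers the conclusion from the dominated model.
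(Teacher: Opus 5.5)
Your outline has the right ingredients: favorable slabs, ladders costing a power of $\varepsilon$, Liggett--Schonmann--Stacey for the edges, the corridor cost charged to the outgoing edges, and comparison with Theorem~\ref{thm:HilarioSaSanchisTeixeira2026}. However, two steps fail as written.

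\emph{The selection.} If $X_k:=\ind\{k\text{ is favorable}\}$ is a function of $\xi$ alone, the sequence is only $1$-dependent, since consecutive windows share the lines $[k\ell+12L,k\ell+18L)$. Its runs $\kappa_n$ therefore need not be i.i.d. Yet Theorem~\ref{thm:HilarioSaSanchisTeixeira2026} needs i.i.d.\ geometric defects with $J\kappa_n\le\zeta_n$ floor by floor, $J$ being your scaling constant. A domination $X'_k\le\ind\{k\text{ favorable}\}$ by i.i.d.\ Bernoulli variables does not give this either: several runs of the favorable sequence can lie inside a single run of zeros of $X'$, so you cannot give each floor its own independent $\zeta_n$. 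Your first remedy, using only every other slab, is fatal in this scheme. Every $\kappa_n$ with $n\ge1$ is then odd, hence $\ge1$. So every vertex carries a corridor whose probability is only bounded below by a fixed power of the constant $\varepsilon$ of Theorem~\ref{thm:DuminilCopinTassionTeixeira2018}, and no floor is defect-free. Your second remedy, made precise, is what the paper does. The selection is the thinned i.i.d.\ sequence itself: $X_k\le\ind\{k\text{ favorable}\}$, i.i.d.\ Bernoulli with parameter close to $1$, so that the $\kappa_n$ are exactly i.i.d.\ geometric. This needs auxiliary randomness independent of $(\xi,(U_z))$, so that $\eta$ still has law $\Pxi$ given $\xi$ and the selection. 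You must therefore give up choosing the selection ``from the environment alone''.

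\emph{Vertices versus edges.} The vertex variables $Y_v$ only have probability $\ge\varepsilon^{2\kappa_n}$, far from $1$, so they cannot enter a Liggett--Schonmann--Stacey domination. They are also not independent of the edge variables: by design, each corridor is crossed by the transport boxes of the adjacent edges (Lemma~\ref{lem:regions}(ii)). Dominating the edges alone by i.i.d.\ $\psi_e$ and then charging the corridor cost to the outgoing edges treats $\psi_e$ and the $Y_v$ as independent, which is unjustified. Positive correlation does not rescue this: for two positively correlated indicators, $\PP(Y=1\text{ or }Z=1)$ is smaller than under independence, so the joint law does not dominate the product of the marginals.

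The missing step is the paper's conditioning. Take $\mathcal{G}:=\sigma(\xi,(X_k)_k,(Y_v)_v)$, and set $\widetilde Z_e:=Z_e$ if $Y_{a(e)}Y_{b(e)}=1$, $\widetilde Z_e:=1$ otherwise, so that one never conditions on a decreasing event. On $\{Y_{a(e)}Y_{b(e)}=1\}$, disjointness of the corridors, Lemma~\ref{lem:regions}(ii) and Harris--FKG give $\PP(Z_e=1\mid\mathcal{G})\ge\Pxi(Z_e=1)$. Conditionally on $\mathcal{G}$, $(\widetilde Z_e)_e$ has a bounded-degree dependency graph, so Liggett--Schonmann--Stacey yields i.i.d.\ $\psi_e\le\widetilde Z_e$ independent of $\mathcal{G}$. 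The corridor cost is then split into variables $\chi_e\le Y_{a(e)}$, built from the conditionally independent $Y_v$. A path with $\chi_e\psi_e=1$ on all its edges is open, since each of its vertices is the origin of one of its edges.

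Minor slips:
\begin{itemize}
\item Proposition~\ref{prop:boxsuper} must be applied with $\rho=\alpha/9$, not $1/9$.
\item A ladder has $\kappa_n$ rails and $\kappa_n-1$ rungs; your bound remains valid.
\item Only the base point $z=(0,k\ell)$ is needed in the favorable condition, since $g$ is even. This avoids a union bound over a number of translates that may grow with $L$.
\end{itemize}
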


Before proving it, we need some preliminaries. Definition~\ref{def:renorm} attaches to each vertex and to each edge an event
measurable with respect to the occupation configuration in a region of $\Ze$,
namely $D_{i,n}$ and $E^{\pm}_{i,n}$ below. We first record how these regions
overlap. Write
\[
  D_{i,n}:=B^{0}_{i,n}\ \ (\text{with }D_{i,n}:=\emptyset\text{ if }\kappa_{n}=0),
  \qquad
  E^{+}_{i,n}:=B^{+}_{i,n}\cup B^{-,t}_{i,n},
  \qquad
  E^{-}_{i,n}:=B^{-}_{i,n}\cup B^{+,t}_{i,n}.
\]
The variable $Y_{i,n}$ is a function of the occupation configuration in $D_{i,n}$, by
\eqref{eq:ladder}, and $Z^{+}_{i,n}$ (resp. $Z^{-}_{i,n}$) is a function of the occupation configuration in
$E^{+}_{i,n}$ (resp. $E^{-}_{i,n}$). Finally let
\[
  \widehat{E}^{+}_{i,n}:=E^{+}_{i,n}\cup D_{i,n}\cup D_{i+1,n+1},
  \qquad
  \widehat{E}^{-}_{i,n}:=E^{-}_{i,n}\cup D_{i,n}\cup D_{i-1,n+1}.
\]

\begin{remark}
As in Section~\ref{sec:geometry} for a box, for a region $A\subseteq\Ze$ and a height $h$ we write $A_{h}$ for the set of $x\in\Z$ such that $(x,h)\in A$, and, for a set $I$ of heights, $A_{I}:=\bigcup_{h\in I}A_{h}$, the $x$-coordinates of the
sites of $A$ lying at a height of $I$.
\end{remark}

\begin{lemma}
\label{lem:regions}
For every sequence $(X_{k})_{k\ge0}$ the following hold.
\begin{enumerate}
\item[(i)] The sets $D_{i,n}$, $(i,n)\in\mathbb{V}$, are pairwise disjoint.
\item[(ii)] $E^{+}_{i,n}$ meets $D_{j,l}$ only for $(j,l)=(i,n)$ and
  $(j,l)=(i+1,n+1)$, and $E^{-}_{i,n}$ only for $(j,l)=(i,n)$ and
  $(j,l)=(i-1,n+1)$; that is, only for the two endpoints of the corresponding
  edge.
\item[(iii)] $\widehat{E}^{\cdot}_{i,n}\cap\widehat{E}^{\cdot}_{j,l}=\emptyset$
  whenever $|n-l|\ge2$ or $|i-j|\ge4$.
\end{enumerate}
\end{lemma}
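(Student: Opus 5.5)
The plan is to reduce all three statements to elementary comparisons of height ranges and horizontal ranges, computed from the explicit definitions \eqref{eq:transport}, \eqref{eq:transportflat} and \eqref{eq:corridor}. First I record the height range of each region. $D_{i,n}=B^{0}_{i,n}$ occupies the heights $[h_{n},\tilde h_{n}+6L-1]$ and the columns $[ig-3w/2,ig+3w/2)$. $B^{\pm}_{i,n}$ occupies $[\tilde h_{n},\tilde h_{n}+18L-1]=[\tilde h_{n},h_{n+1}+6L-1]$. $B^{\pm,t}_{i,n}$ occupies $[\tilde h_{n},\tilde h_{n}+6L-1]$. Because $h_{n+1}\ge\tilde h_{n}+12L$ and $\tilde h_{n+1}\ge h_{n+1}$, consecutive floors overlap only in the strip $[h_{n+1},h_{n+1}+6L-1]$, and regions of floors $n$ and $l$ with $|n-l|\ge2$ have disjoint height ranges. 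Indeed, $\widehat E^{\cdot}_{i,n}$ lies in $[h_{n},h_{n+1}+\kappa_{n+1}\ell+6L-1]$, which is below $h_{n+2}\le h_{l}$. This gives the height half of (iii). For (i), the same floors are disjoint by height except when $l=n$. In that case $g\ge 10\beta-2$ and $3w+4\le2\beta$ by \eqref{eq:wsmall}, so the corridors at columns $ig$ and $jg$ with $i\ne j$ are horizontally disjoint.

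For (ii) I will compute the horizontal sections at each relevant height. At relative height $t$ above $\tilde h_{n}$, $B^{+}_{i,n}$ has section $[ig-5\beta+t\alpha,\,ig-3\beta+t\alpha)$ and $B^{-}_{i,n}$ has the mirror image. On $[0,6L)$ the truncated boxes are the same. So at the floor-$n$ heights $t\in[0,6L)$, where $D_{i,n}$ lives, the two sections sweep across the centre but stay in $[ig-5\beta,ig+5\beta+1)$. They therefore meet $D_{i,n}$ possibly, and they avoid $D_{j,n}$ for $j\ne i$, because $|j-i|g\ge 10\beta-2$ and the corridor half-width is at most $\beta-2$. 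At the floor-$(n+1)$ strip, $t\in[12L,18L)$, only $B^{+}_{i,n}$ is present (the truncated box ended). Its section is $S^{+}_{i+1,n+1}$ shifted by $t'\alpha$ with $t'\in[0,6L)$, that is, contained in $[(i+1)g-3\beta+\Delta,(i+1)g+3\beta+\Delta)$. This set meets only $D_{i+1,n+1}$. It can meet nothing at $D_{i+3,n+1}$ or at other columns, since those are at distance $\ge 2g-3\beta-w>\ldots$. Other floors are excluded by the height ranges. The statement for $E^{-}$ follows by the reflection symmetry.

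For the horizontal half of (iii) with $|n-l|\le1$, I bound the column span of $\widehat E^{+}_{i,n}$. It is contained in $[ig-5\beta,\,(i+1)g+3\beta+2]$, and symmetrically $\widehat E^{-}_{i,n}\subseteq[(i-1)g-3\beta-2,\,ig+5\beta]$. So each $\widehat E_{i,n}$ lies within $[ig-g-4\beta,\,ig+g+4\beta]$ roughly, of half-width below $2g$ when $L$ is large. Two such sets with $|i-j|\ge4$ are then separated, since $4g>2(g+5\beta)$ once $g\ge10\beta-2$ and $\beta$ is large. The main obstacle is purely bookkeeping. I need to track the rounding $\Delta<2$, the half-open conventions, and the real-translation convention for boxes carefully enough that the margins (of order $\beta$, versus errors of order $1$ and $w\le(2\beta-4)/3$) are visibly positive. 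I will state explicitly that $L$ is taken large enough that $\beta$ dominates these constants, which the paper already assumes via \eqref{eq:wsmall}.
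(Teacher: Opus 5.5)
Your proposal follows the paper's route. You separate floors by height (the paper phrases this through the slabs $\tau_{n-1}+1,\dots,\tau_{n+1}$, you through explicit heights, which amounts to the same thing). You then compare horizontal sections on the two strips where floors $n$ and $n+1$ can meet, with margins of order $\beta$ against errors of order one. Only $g>10\beta-2$, $\Delta<2$ and $2\beta\ge3w+4\ge10$ are needed, so no further largeness of $L$ is required.

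One computation is wrong, however. On the strip $[h_{n+1},h_{n+1}+6L)$, the box $B^{+}_{i,n}$ is $S^{+}_{i+1,n+1}$ shifted by $\alpha t'\in[0,6\beta)$. Its sections therefore lie in $[(i+1)g+\Delta-3\beta,(i+1)g+\Delta+5\beta)=[ig+7\beta,ig+15\beta)$, not in $[(i+1)g+\Delta-3\beta,(i+1)g+\Delta+3\beta)$. Accordingly, your bound $\widehat E^{+}_{i,n}\subseteq[ig-5\beta,(i+1)g+3\beta+2]$ is false, since the top lines of $B^{+}_{i,n}$ reach nearly $ig+15\beta$. The correct statement is $(\widehat E^{\pm}_{i,n})_{\Z}\subseteq(ig-15\beta,ig+15\beta)$, of half-width $g+\Delta+5\beta=15\beta$ rather than roughly $g+4\beta$.

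The conclusions survive with the corrected intervals. $D_{i-1,n+1}$ ends before $ig-9\beta+2$ and $D_{i+3,n+1}$ starts after $ig+29\beta-6$, both outside $[ig+7\beta,ig+15\beta)$; this also completes your unfinished inequality ``$\ge 2g-3\beta-w>\ldots$''. For $|i-j|\ge4$ one has $|i-j|g>40\beta-8>30\beta$, which is what your final check $4g>2(g+5\beta)$ was aiming at. With this correction your argument is the paper's proof.
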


\begin{proof}
Throughout we use $10\beta-2<g\le10\beta$, which follows from~\eqref{eq:g}, and
$2\beta\ge3w+4\ge10$, which follows from~\eqref{eq:wsmall} and $w\ge2$.

\emph{Point (i).}
By~\eqref{eq:corridor}, $D_{i,n}$ is contained in
$(ig-\beta,ig+\beta)\times[h_{n},\tilde h_{n}+6L)$,
that is in the slabs $\tau_{n-1}+1,\dots,\tau_{n}$ by~\eqref{eq:heights}. At the same floor, the indices of two vertices of $\mathbb{V}$ differ by at least $2$, hence their columns by at least $2g>2\beta$, so that the rectangles above are disjoint. Two
corridors at floors $n<l$ occupy disjoint sets of slabs, since
$\tau_{n}\le\tau_{l-1}<\tau_{l-1}+1$.

\emph{Point (ii).}  Using the symmetry of our construction, we only prove the assertion for $E^{+}_{i,n}$.
The heights of $E^{+}_{i,n}$ are, by~\eqref{eq:transport} and
\eqref{eq:transportflat}, those of $[\tilde h_{n},\tilde h_{n}+18L)$, that is
those of the slab $\tau_{n}$ together with the bottom $6L$ lines of the slab
$\tau_{n}+1$, while $D_{j,l}$ occupies the slabs $\tau_{l-1}+1,\dots,\tau_{l}$,
so only $l\in\{n,n+1\}$ can occur. Since
\[
  \bigl(B^{+}_{i,n}\bigr)_{[\tilde h_{n},\tilde h_{n}+12L)}
  \subseteq[ig-5\beta,ig+9\beta),
  \qquad
  \bigl(B^{-,t}_{i,n}\bigr)_{\Z}\subseteq(ig-3\beta,ig+5\beta),
\]
the set $E^{+}_{i,n}$ meets the slab $\tau_{n}$ inside $[ig-5\beta,ig+9\beta)$.

For $l=n$ the index $j-i$ is even, and $\bigl(D_{i+2,n}\bigr)_{\Z}$ starts after
$ig+2g-\beta>ig+19\beta-4$, while $\bigl(D_{i-2,n}\bigr)_{\Z}$ ends before
$ig-2g+\beta\le ig-19\beta+4$, hence outside $[ig-5\beta,ig+9\beta)$. Larger even values of $|j-i|$ are even further away, so
only $j=i$ remains. 

For $l=n+1$, at the heights $[\tilde h_{n}+12L,\tilde h_{n}+18L)$ only $B^{+}_{i,n}$
remains, $B^{-,t}_{i,n}$ stopping at $\tilde h_{n}+6L$, and
$\bigl(B^{+}_{i,n}\bigr)_{[\tilde h_{n}+12L,\tilde h_{n}+18L)}
\subseteq[ig+7\beta,ig+15\beta)$. As $D_{j,n+1}$ lies at heights at least
$h_{n+1}=\tilde h_{n}+12L$, these are the only sites of $E^{+}_{i,n}$ it can
meet. The index $j-i$ is odd, and
$\bigl(D_{j,n+1}\bigr)_{\Z}\subseteq(jg-\beta,jg+\beta)$. For $j=i-1$ this ends
before $ig-9\beta+2$ and for $j=i+3$ it starts after $ig+29\beta-6$, both
disjoint from $[ig+7\beta,ig+15\beta)$. Larger odd values of $|j-i|$ are further away, so only $j=i+1$
remains.

\emph{Point (iii).}
$\widehat{E}^{\cdot}_{i,n}$ is contained in the slabs
$\tau_{n-1}+1,\dots,\tau_{n+1}$, so two such sets with $|n-l|\ge2$ occupy
disjoint sets of slabs. For the $x$-coordinates,
$\bigl(B^{+}_{i,n}\bigr)_{\Z}\subseteq[ig-5\beta,ig+15\beta)$ and
$\bigl(B^{-}_{i,n}\bigr)_{\Z}\subseteq(ig-15\beta,ig+5\beta)$, while the
corridors involved are $D_{i,n}$, lying in $(ig-\beta,ig+\beta)$, and, according
to the direction, $D_{i+1,n+1}$ or $D_{i-1,n+1}$, lying in $(ig-11\beta,ig+11\beta)$ since
$g\le10\beta$. Therefore
$\bigl(\widehat{E}^{\cdot}_{i,n}\bigr)_{\Z}\subseteq(ig-15\beta,ig+15\beta)$, an
interval of width $30\beta$. Since $4g>40\beta-8>30\beta$, two such sets with
$|i-j|\ge4$ are disjoint.
\end{proof}

We now fix the two probabilistic inputs. The first one is the cost of a
corridor, given by Theorem~\ref{thm:DuminilCopinTassionTeixeira2018}. This is the survival role of the vertices in the unfavorable regimes.

\begin{lemma}
\label{lem:laddercost}
Let $\varepsilon$ be the constant of
Theorem~\ref{thm:DuminilCopinTassionTeixeira2018} and let $L$ be large enough for
that theorem to apply at the length $6L$ and for $w=w_{6L}\ge2$. Then, for every
odd $N=2k+1\ge3$,
\begin{equation}
  \label{eq:ladcost}
  \Pxi\bigl(\mathrm{Lad}(\Lambda_{N}(z))\bigr)\ \ge\ \varepsilon^{\,N-1}
  \qquad\text{for every }z\in\R\times[0,\infty)\text{ and every environment }\xi .
\end{equation}
\end{lemma}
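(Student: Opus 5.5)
The plan is to use the FKG (Harris) inequality under $\Pxi$ together with the quenched crossing bounds of Corollary~\ref{cor:DuminilCopinTassionTeixeira2018quenched}. Under $\Pxi$ the occupation variables are independent, though not identically distributed. Every event in the intersection~\eqref{eq:lad} is a crossing event of a box, so it is increasing in $(\eta_{z})$. Harris' inequality for product measures therefore gives
\[
  \Pxi\bigl(\mathrm{Lad}(\Lambda_{N}(z))\bigr)\ \ge\ \prod_{j=0}^{k-1}\Pxi\bigl(C^{\uparrow}(T_{j})\bigr)\prod_{j=0}^{k-2}\Pxi\bigl(C^{\leftrightarrow}(W_{j})\bigr),
\]
where $C^{\leftrightarrow}$ stands for $C^{\rightarrow}$ or $C^{\leftarrow}$ according to the parity of $j$. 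The case $k=1$ is the single factor $\Pxi(C^{\uparrow}(\Lambda_{3}(z)))$.

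Next, each factor is bounded below by $\varepsilon$. Apply Theorem~\ref{thm:DuminilCopinTassionTeixeira2018} at length $n=6L$, so that $w=w_{6L}$, $u=(w,0)$ and $v=(0,6L)$. Then each rail $T_{j}$ is a translate of $R(u,3v)=R((w,0),(0,18L))$. Each rung $W_{j}$ is a translate of $R(3u,v)=R((3w,0),(0,6L))$, and for odd $j$ we use $C^{\leftarrow}$. Corollary~\ref{cor:DuminilCopinTassionTeixeira2018quenched} gives all these bounds for every environment $\xi$. By the convention of Section~\ref{sec:geometry}, they hold uniformly for translates by $z\in\R\times[0,\infty)$. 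All the translation vectors in~\eqref{eq:Tj} and~\eqref{eq:Wj} have nonnegative vertical component, so the convention applies.

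It remains to count the factors: there are $k$ rails and $k-1$ rungs, so the product is at least $\varepsilon^{2k-1}=\varepsilon^{N-2}\ge\varepsilon^{N-1}$, using $\varepsilon\le1/2<1$. The case $k=1$ gives $\varepsilon\ge\varepsilon^{2}$.

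The main point needing care is the FKG step, because $\Pxi$ is a product measure that is not identically distributed. Harris' inequality holds for any product measure on $\{0,1\}^{\Ze}$, and the events depend on finitely many sites, so this is fine. A second point is the uniformity of the crossing bound under real translations of the boxes. The paper explicitly takes this as part of the convention of Section~\ref{sec:geometry}, and it covers $z\in\R\times[0,\infty)$.
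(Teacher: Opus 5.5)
Your proof is correct and uses the same ingredients as the paper: Harris--FKG on the $2k-1$ increasing crossing events of the rails and rungs, each bounded below by $\varepsilon$ via Theorem~\ref{thm:DuminilCopinTassionTeixeira2018}, which gives $\varepsilon^{2k-1}\ge\varepsilon^{N-1}$. The only difference is the order of the steps. The paper first passes from $\Pxi$ to $\PP_{\pc}$ by the monotone coupling~\eqref{eq:sandwich} and then applies FKG under the homogeneous product measure. You apply FKG directly under the inhomogeneous product measure $\Pxi$ and then invoke Corollary~\ref{cor:DuminilCopinTassionTeixeira2018quenched} for each factor, which is equally valid.
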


\begin{proof}
The event $\mathrm{Lad}(\Lambda_{N}(z))$ is an intersection of crossing
events, thus it is increasing in the occupation configuration. By
\eqref{eq:sandwich} we have $\eta^{\mathrm{crit}}\le\eta^{\xi}$ for every
environment $\xi$, so that, as in
Corollary~\ref{cor:DuminilCopinTassionTeixeira2018quenched},
\[
  \Pxi\bigl(\mathrm{Lad}(\Lambda_{N}(z))\bigr)
  \ \ge\ \PP_{\pc}\bigl(\mathrm{Lad}(\Lambda_{N}(z))\bigr) .
\]
It remains to bound the right-hand side; by the convention of Section~\ref{sec:geometry}, we take $z=0$.

If $k=1$, then by~\eqref{eq:lad3} we have $\mathrm{Lad}(\Lambda_{3}(0))=C^{\uparrow}(R((w,0),(0,18L)))$, and
Theorem~\ref{thm:DuminilCopinTassionTeixeira2018} gives
$\PP_{\pc}(\mathrm{Lad}(\Lambda_{3}(0)))\ge\varepsilon\ge\varepsilon^{N-1}$,
since $\varepsilon\le1$.

Assume $k\ge2$. By~\eqref{eq:lad}, $\mathrm{Lad}(\Lambda_{N}(0))$ is the
intersection of the $k$ increasing events $C^{\uparrow}(T_{j})$ and of the $k-1$
increasing events $C^{\rightarrow}(W_{j})$ or $C^{\leftarrow}(W_{j})$. These
events are not independent, the rails and the rungs overlapping at their ends,
but the Harris--FKG inequality~\cite{Harris1960,FortuinKasteleynGinibre1971}
applies to them under the
product measure $\PP_{\pc}$ and gives
\[
  \PP_{\pc}\bigl(\mathrm{Lad}(\Lambda_{N}(0))\bigr)
    \ \ge\ \prod_{0\le j\le k-1}\PP_{\pc}\bigl(C^{\uparrow}(T_{j})\bigr)
  \prod_{\substack{0\le j\le k-2\\ j\ \text{even}}}\PP_{\pc}\bigl(C^{\rightarrow}(W_{j})\bigr)
  \prod_{\substack{0\le j\le k-2\\ j\ \text{odd}}}\PP_{\pc}\bigl(C^{\leftarrow}(W_{j})\bigr).
\]
By~\eqref{eq:Tj} each $T_{j}$ is a translate of $R((w,0),(0,18L))$ and by
\eqref{eq:Wj} each $W_{j}$ is a translate of $R((3w,0),(0,6L))$, so each of these
$2k-1$ factors is at least $\varepsilon$ by
Theorem~\ref{thm:DuminilCopinTassionTeixeira2018}. Hence
$\PP_{\pc}(\mathrm{Lad}(\Lambda_{N}(0)))\ge\varepsilon^{2k-1}\ge\varepsilon^{N-1}$,
since $2k-1\le N-1$ and $\varepsilon\le1$.
\end{proof}

The second input is the cost of a transport, which is where the supercritical
estimate enters. This is the propagation role of the edges. By
Definition~\ref{def:renorm} an edge issued from $(i,n)$ requires the crossing of
one full transport box and one truncated one, and by~\eqref{eq:flatincl} the crossing of
the full box in a given direction implies that of the truncated one. Both edges
issued from $(i,n)$ are therefore open as soon as the two full boxes are crossed,
an event we denote, for $z\in\R^{2}$, by
\[
  C^{\upuparrows}(z):=C^{\uparrow}\bigl(B^{+}(z)\bigr)\cap C^{\uparrow}\bigl(B^{-}(z)\bigr).
\]
Its probability depends on the environment along the $18L$ lines above $z$, and
Proposition~\ref{prop:boxsuper} makes it close to one on average only. 

\begin{proof}[Proof of Proposition~\ref{prop:renormperc}]
Fix $p,\theta\in(0,1)$ for which
Theorem~\ref{thm:HilarioSaSanchisTeixeira2026} applies, and let $J$ be an
integer fixed below. We compare the renormalized model with the model of that
theorem through an intermediate one: the independent bond percolation on
$\mathbb{V}$ in which each edge issued from the floor $n$ is open with
probability $p^{1+J\kappa_{n}}$, that is, the model of
Theorem~\ref{thm:HilarioSaSanchisTeixeira2026} with $J\kappa_{n}$ in place of
$\zeta_{n}$. After choosing the selection $(X_{k})_{k\ge0}$, we prove two
dominations. First, conditionally on the selection $(X_{k})_{k\ge0}$, which
determines the scheme, and on the environment $\xi$, under which the occupation
variables are independent, the probability that the renormalized model has an
infinite open path starting at $(0,0)$ is at least the probability that the
intermediate model has one. Second, the intermediate model dominates the model
of Theorem~\ref{thm:HilarioSaSanchisTeixeira2026} as soon as
$J\kappa_{n}\le\zeta_{n}$ for defects $\zeta_{n}$ as in that theorem, since then
$p^{1+J\kappa_{n}}\ge p^{1+\zeta_{n}}$; that theorem then concludes.

The second domination requires the $\kappa_{n}$ to be i.i.d.\ with a geometric
tail, whereas the slabs on which the transport boxes are likely to be crossed,
called favorable below, only form a $1$-dependent sequence; the selection
therefore discards some of them, using
\cite[Theorem~0.0]{LiggettSchonmannStacey1997}. The first domination meets two
obstacles. An edge depends on its two endpoints, through the corridors, and on
the neighboring edges, through the overlaps of Lemma~\ref{lem:regions}; the
Harris--FKG inequality removes the first dependence and
\cite[Corollary~1.4]{LiggettSchonmannStacey1997} dominates the second by an
independent field. Finally, Theorem~\ref{thm:HilarioSaSanchisTeixeira2026} has no
vertex variables, so the price $\varepsilon^{2\kappa_{n}}$ of a corridor is
charged to the two edges issued from it, at the cost $\varepsilon^{4\kappa_{n}}$
each, which $J$ converts into $p^{J\kappa_{n}}$.

The probability space is assumed to be rich enough to carry the auxiliary
independent random variables constructed below.

\emph{The selection of slabs.}
Let $\varepsilon\le1/2$ be the constant of
Theorem~\ref{thm:DuminilCopinTassionTeixeira2018}, and $J$ the smallest
integer such that
\begin{equation}
  \label{eq:J}
  p^{J}\ \le\ \varepsilon^{4}.
\end{equation}

The selection retains only slabs on which the two transport boxes are crossed
with high probability given the environment. Given $\varepsilon_{0}>0$, fixed
below, we say that the slab $k$ is \emph{favorable} if
\begin{equation}
  \label{eq:favorable}
    \Pxi\bigl(C^{\upuparrows}((0,k\ell))\bigr)\ \ge\ 1-\sqrt{\varepsilon_{0}}.
\end{equation}
The indicator $\ind\{k\text{ is favorable}\}$ is a function of
$(\xi_{j})_{k\ell\le j<k\ell+18L}$, the lines met by the transport boxes, and $18L<2\ell$,
so $\ind\{k\text{ is favorable}\}$ and $\ind\{k'\text{ is favorable}\}$ involve disjoint
sets of lines as soon as $|k-k'|\ge2$: the sequence
$(\ind\{k\text{ is favorable}\})_{k\ge0}$ is $1$-dependent.

We fix $\varepsilon_{0}$ small enough, depending on $p$ and $\theta^{J}$ only,
for~\cite[Theorem~0.0]{LiggettSchonmannStacey1997} to apply with parameter
$1-\theta^{J}$ to $1$-dependent sequences of indicators each equal to $1$ with
probability at least $1-\sqrt{\varepsilon_{0}}$, and for the first domination. In particular $\varepsilon_{0}$ does not depend on $L$.

By Proposition~\ref{prop:boxsuper}, applied at the scale $18L$, with $\rho=\alpha/9$ and $\varepsilon_{0}/2$ in each of the two directions
(by $\beta=\alpha L$ and~\eqref{eq:transport}, $B^{\pm}(z)$ is a translate of
$R\bigl((\rho n,0),(\pm\alpha n,n)\bigr)$), we can fix $L$, depending on
$\delta$ only, such that
\[
    \PP\bigl(C^{\upuparrows}(z)\bigr)\ >\ 1-\varepsilon_{0}
  \qquad\text{for every }z\in\R\times[0,\infty),
\]
and large enough also for Lemma~\ref{lem:laddercost} and~\eqref{eq:wsmall}.
Then by~\eqref{eq:favorable} and Markov's inequality,
\[
    \varepsilon_{0}\ >\ \PP\bigl(C^{\upuparrows}((0,k\ell))^{c}\bigr)
  \ =\ \EE\bigl[\Pxi\bigl(C^{\upuparrows}((0,k\ell))^{c}\bigr)\bigr]
  \ \ge\ \sqrt{\varepsilon_{0}}\;\PP(k\text{ is not favorable}),
\]
so $\PP(k\text{ is favorable})\ge1-\sqrt{\varepsilon_{0}}$ for every $k$.

By the choice of $\varepsilon_{0}$, $(\ind\{k\text{ is favorable}\})_{k\ge0}$ therefore
stochastically dominates i.i.d.\ Bernoulli variables of parameter $1-\theta^{J}$, so by
\cite[Theorem~0.0, Lemma~1.0]{LiggettSchonmannStacey1997} there is a sequence
$(X_{k})_{k\ge0}$ of such variables with $X_{k}\le\ind\{k\text{ is favorable}\}$ for
every $k$; it has
infinitely many ones almost surely. We construct $(X_{k})_{k\ge0}$ from $\xi$ and additional randomness independent
of $(\xi,(U_{z})_{z\in\Ze})$, so that the conditional law of $\eta$ given
$\sigma(\xi,(X_{k})_{k\ge0})$ is $\Pxi$. Moreover, by
\eqref{eq:tau}, the runs $\kappa_{n}$ are i.i.d.\ with
$\PP(\kappa_{n}=k)=(1-\theta^{J})\theta^{Jk}$. Finally, every retained slab is
favorable and the horizontal translation by $(ig,0)$ preserves $\Pxi$, $g$
being even, so
\begin{equation}
  \label{eq:favret}
    \Pxi\bigl(C^{\upuparrows}(\tilde z_{i,n})\bigr)
  \ =\ \Pxi\bigl(C^{\upuparrows}((0,\tau_{n}\ell))\bigr)
  \ \ge\ 1-\sqrt{\varepsilon_{0}}
  \qquad\text{for every }(i,n)\in\mathbb{V}.
\end{equation}
\emph{First domination.}
The intermediate model has independent edges and no vertices. We therefore
build, conditionally on $\sigma(\xi,(X_{k})_{k\ge0})$, an independent bond percolation lying below the
renormalized model, in which each edge issued from the floor $n$ is kept with
probability at least $p^{1+J\kappa_{n}}$.

For an edge $e$ of $\mathbb{V}$, write $a(e)$ and $b(e)$ for the vertices it is
issued from and leads to, and $Z_{e}$, $E_{e}$, $\widehat{E}_{e}$ for the
corresponding $Z^{\pm}_{i,n}$, $E^{\pm}_{i,n}$, $\widehat{E}^{\pm}_{i,n}$. Set
$\mathcal{G}:=\sigma(\xi,(X_{k})_{k\ge0})\vee\sigma(Y_{v}:v\in\mathbb{V})$ and
\[
  \widetilde{Z}_{e}:=Z_{e}\ \text{ if }Y_{a(e)}Y_{b(e)}=1,
  \qquad
  \widetilde{Z}_{e}:=1\ \text{ otherwise},
\]
so that an edge no longer depends on its endpoints being open.

We check, conditionally on $\mathcal{G}$, the hypotheses of
\cite[Corollary~1.4]{LiggettSchonmannStacey1997} for $(\widetilde{Z}_{e})_{e}$. Since $\{Y_{a(e)}Y_{b(e)}=1\}$ is $\mathcal{G}$-measurable, the definition of
$\widetilde{Z}_{e}$ gives
\[
  \PP\bigl(\widetilde{Z}_{e}=1\mid\mathcal{G}\bigr)=
  \begin{cases}
    1 & \text{on }\{Y_{a(e)}Y_{b(e)}=0\},\\[2pt]
    \PP\bigl(Z_{e}=1\mid\mathcal{G}\bigr) & \text{on }\{Y_{a(e)}Y_{b(e)}=1\}.
  \end{cases}
\]
On $\{Y_{a(e)}Y_{b(e)}=1\}$,
\[
  \PP\bigl(Z_{e}=1\mid\mathcal{G}\bigr)
  =\Pxi\bigl(Z_{e}=1\mid Y_{a(e)}Y_{b(e)}=1\bigr)
  \ge\Pxi(Z_{e}=1)
  \ge\Pxi\bigl(C^{\upuparrows}(\tilde z_{a(e)})\bigr)
  \ge1-\sqrt{\varepsilon_{0}}.
\]
The equality holds because, conditionally on $\sigma(\xi,(X_{k})_{k\ge0})$,
$\eta$ has law $\Pxi$, under which the configurations on the corridors $D_{v}$,
pairwise disjoint by Lemma~\ref{lem:regions}(i), and on the sites lying in no
corridor are independent, each $Y_{v}$ depending on $D_{v}$ only by
\eqref{eq:ladder}. As $Z_{e}$ is a function of $\eta$ on $E_{e}$, which meets no
corridor other than $D_{a(e)}$ and $D_{b(e)}$ by Lemma~\ref{lem:regions}(ii),
conditioning on $\mathcal{G}$ amounts to conditioning on
$\{Y_{a(e)}Y_{b(e)}=1\}$. The first inequality holds by the Harris--FKG
inequality~\cite{Harris1960,FortuinKasteleynGinibre1971}, the events
$\{Z_{e}=1\}$ and $\{Y_{a(e)}Y_{b(e)}=1\}$ being increasing in $\eta$. The second follows from
$C^{\upuparrows}(\tilde z_{a(e)})\subseteq\{Z_{e}=1\}$, by~\eqref{eq:flatincl} and
Definition~\ref{def:renorm}. The last one is~\eqref{eq:favret}.

We now turn to the range of the dependence. Let $S_{1},S_{2}$ be two sets of edges such
that for every edge of $S_{1}$ issued from $(i,n)$ and every edge of
$S_{2}$ issued from $(j,l)$, we have $|n-l|\ge2$ or $|i-j|\ge4$. By
Lemma~\ref{lem:regions}(iii), $\bigcup_{e\in S_{1}}\widehat{E}_{e}$ and
$\bigcup_{e\in S_{2}}\widehat{E}_{e}$ are disjoint. Each $\widehat{E}_{e}$ is
made of $D_{a(e)}$, $D_{b(e)}$ and of sites lying in no corridor by
Lemma~\ref{lem:regions}(ii), and these pieces remain independent conditionally
on $\mathcal{G}$, each $Y_{v}$ depending on $D_{v}$ only. As $\widetilde{Z}_{e}$
is a function of $Y_{a(e)}$, $Y_{b(e)}$, which are $\mathcal{G}$-measurable, and
of $\eta$ on $E_{e}\subseteq\widehat{E}_{e}$, the families
$(\widetilde{Z}_{e})_{e\in S_{1}}$ and $(\widetilde{Z}_{e})_{e\in S_{2}}$ are
independent conditionally on $\mathcal{G}$. Hence, conditionally on $\mathcal{G}$, $(\widetilde{Z}_{e})_{e}$ has a
dependence graph of bounded degree, which depends neither on $\xi$ nor on
$(X_{k})_{k\ge0}$. Therefore, by the choice of $\varepsilon_{0}$ and
\cite[Corollary~1.4, Lemma~1.0]{LiggettSchonmannStacey1997}, there are i.i.d.\ Bernoulli
variables $(\psi_{e})_{e}$ of mean $p$ and independent of
$\mathcal{G}$ such that $\psi_{e}\le\widetilde{Z}_{e}$.

The price of a corridor must now be shared between the two edges issued from it.
By~\eqref{eq:ladder}, $Y_{i,n}$ is a function of $\eta$ on $D_{i,n}$, so the
$Y_{i,n}$ are independent conditionally on $\sigma(\xi,(X_{k})_{k\ge0})$ by
Lemma~\ref{lem:regions}(i), and by~\eqref{eq:ladcost} with $N=2\kappa_{n}+1$,
\[
  \PP\bigl(Y_{i,n}=1\mid\sigma(\xi,(X_{k})_{k\ge0})\bigr)\ \ge\ \varepsilon^{2\kappa_{n}}
  \qquad\text{if }\kappa_{n}\ge1,
  \qquad
  Y_{i,n}=1\ \text{if }\kappa_{n}=0 .
\]
If $\kappa\ge1$, then
$1-(1-\varepsilon^{4\kappa})^{2}\le2\varepsilon^{4\kappa}\le\varepsilon^{2\kappa}$
since $\varepsilon\le1/2$, and the inequality is trivial if $\kappa=0$. Hence, conditionally on $\sigma(\xi,(X_{k})_{k\ge0})$, one can construct,
independently of $(\psi_{e})_{e}$ and for each $v$ at floor $n$, two
independent Bernoulli variables of parameter $\varepsilon^{4\kappa_{n}}$, one
for each edge issued from $v$, bounded above by $Y_{v}$. We call these
variables $(\chi_{e})_{e}$; they are independent conditionally on
$\sigma(\xi,(X_{k})_{k\ge0})$, as the $Y_{v}$ are.

It remains to assemble. Let $\mathcal{O}$ be the event that there is an infinite
path of $\mathbb{V}$ starting at $(0,0)$ whose edges satisfy $Y_{a(e)}=1$ and
$\widetilde{Z}_{e}=1$. Every vertex of such a path is the starting point of an
edge of the path, so $Y_v=1$ at each of its vertices, hence $Z_{e}=\widetilde{Z}_{e}=1$ at each of
its edges. Therefore, the path is open in the sense of Definition~\ref{def:renorm}. Since
$\chi_{e}\le Y_{a(e)}$ and $\psi_{e}\le\widetilde{Z}_{e}$, an infinite path
starting at $(0,0)$ whose edges satisfy $\chi_{e}\psi_{e}=1$ realizes
$\mathcal{O}$. Conditionally on $\sigma(\xi,(X_{k})_{k\ge0})$, the variables
$\chi_{e}\psi_{e}$ are independent Bernoulli of parameter
$p\,\varepsilon^{4\kappa_{n}}\ge p^{1+J\kappa_{n}}$ by~\eqref{eq:J}, $n$ being the
floor of $a(e)$. Hence $\PP(\mathcal{O}\mid\sigma(\xi,(X_{k})_{k\ge0}))$ is at
least the probability that the intermediate model has an infinite open path
starting at $(0,0)$.

\emph{Second domination.}
Since the $\kappa_{n}$ are i.i.d.\ with $\PP(\kappa_{n}=k)=(1-\theta^{J})\theta^{Jk}$,
\[
  \PP(J\kappa_{n}\ge j)\ =\ \theta^{J\lceil j/J\rceil}\ \le\ \theta^{j},
  \qquad j\ge0 ,
\]
so $(\kappa_{n})_{n\ge0}$ can be coupled
with i.i.d.\ variables
$(\zeta_{n})_{n\ge0}$ of law $(1-\theta)\theta^{k}$ so that
$J\kappa_{n}\le\zeta_{n}$. Then $p^{1+J\kappa_{n}}\ge p^{1+\zeta_{n}}$ and, the
existence of an infinite open path being an increasing event, the probability
that the intermediate model has one starting at $(0,0)$ is at least its
probability under $\mathbb{P}^{\zeta}_{p}$, which is the second domination. By
Theorem~\ref{thm:HilarioSaSanchisTeixeira2026} the latter is positive for almost
every $\zeta$, so $\PP(\mathcal{O}\mid\sigma(\xi,(X_{k})_{k\ge0}))>0$ almost surely, and
$\PP(\mathcal{O})>0$.
\end{proof}

\begin{proof}[Proof of Theorem~\ref{thm:main}]
Fix $L$ and $(X_{k})_{k\ge0}$ as given by Proposition~\ref{prop:renormperc}. On
the event that there is an infinite open path of $\mathbb{V}$ starting at
$(0,0)$, Proposition~\ref{prop:reconstruction} gives an infinite open path of
$\Ze$, so that
\[
  \PP\bigl(\text{there is an infinite open path in }\Ze\bigr)\ >\ 0.
\]
The starting point of that path is random, whereas $\Theta$ concerns the origin.
But the event that some infinite open path exists is contained in the countable
union, over $z\in\Ze$, of the events $\{C_{z}\text{ is infinite}\}$, so one of
them has positive probability. The environment is stationary and the uniform
variables are i.i.d., thus translating $z$ to the origin gives
$\Theta(\pG,\pc,\delta)=\PP(C_{0}\text{ is infinite})>0$.
\end{proof}

\begin{acks}[Acknowledgments]
This work was conducted during an internship of the first author at the Instituto de Matemática of the Universidade Federal do Rio de Janeiro, under the supervision of the second author. The authors thank Bernardo N.~B. de Lima for proposing the problem, and Ivailo Hartarsky for helpful conversations during the XXIX Brazilian School of Probability, held at the Centro Brasileiro de Pesquisas Físicas, Rio de Janeiro, on August 3--7, 2026.

AI tools were used to improve the wording of the manuscript and the formatting of the document, and to draw the figures. They were not used to devise the proof strategy or the mathematical arguments. The authors verified all AI-assisted material and take full responsibility for the content of the paper.
\end{acks}

\bibliographystyle{spr-ims-number}
\bibliography{references}

\begin{thebibliography}{13}

\bibitem{ArcherEtAl2026}
\begin{barticle}
\bauthor{\bsnm{Archer}, \binits{E.}},
\bauthor{\bsnm{Hartarsky}, \binits{I.}},
\bauthor{\bsnm{Kolesnik}, \binits{B.}},
\bauthor{\bsnm{Olesker-Taylor}, \binits{S.}},
\bauthor{\bsnm{Schapira}, \binits{B.}} and
\bauthor{\bsnm{Valesin}, \binits{D.}}
(\byear{2026}).
\batitle{{Catalan} percolation}.
\bjtitle{Probab. Theory Related Fields}
\bvolume{195}
\bfpage{413}--\blpage{449}.
\bdoi{10.1007/s00440-025-01406-4}.
\bmrnumber{5009017}
\end{barticle}
\endbibitem

\bibitem{BezuidenhoutGrimmett1990}
\begin{barticle}
\bauthor{\bsnm{Bezuidenhout}, \binits{C.}} and
\bauthor{\bsnm{Grimmett}, \binits{G.}}
(\byear{1990}).
\batitle{The critical contact process dies out}.
\bjtitle{Ann. Probab.}
\bvolume{18}
\bfpage{1462}--\blpage{1482}.
\bdoi{10.1214/aop/1176990627}.
\bmrnumber{1071804}
\end{barticle}
\endbibitem

\bibitem{LimaSidoraviciusVares2023}
\begin{barticle}
\bauthor{\bparticle{de} \bsnm{Lima}, \binits{B. N. B.}},
\bauthor{\bsnm{Sidoravicius}, \binits{V.}} and
\bauthor{\bsnm{Vares}, \binits{M. E.}}
(\byear{2023}).
\batitle{Dependent percolation on {$\mathbb{Z}^2$}}.
\bjtitle{Braz. J. Probab. Stat.}
\bvolume{37}
\bfpage{431}--\blpage{454}.
\bdoi{10.1214/23-BJPS575}.
\bmrnumber{4634238}
\end{barticle}
\endbibitem

\bibitem{LimaUngarettiVares2024}
\begin{barticle}
\bauthor{\bparticle{de} \bsnm{Lima}, \binits{B. N. B.}},
\bauthor{\bsnm{Ungaretti}, \binits{D.}} and
\bauthor{\bsnm{Vares}, \binits{M. E.}}
(\byear{2024}).
\batitle{A note on oriented percolation with inhomogeneities and strict inequalities}.
\bjtitle{Stochastic Process. Appl.}
\bvolume{175}
\bfpage{104387}.
\bdoi{10.1016/j.spa.2024.104387}.
\bmrnumber{4758364}
\end{barticle}
\endbibitem

\bibitem{DuminilCopinTassionTeixeira2018}
\begin{barticle}
\bauthor{\bsnm{Duminil-Copin}, \binits{H.}},
\bauthor{\bsnm{Tassion}, \binits{V.}} and
\bauthor{\bsnm{Teixeira}, \binits{A.}}
(\byear{2018}).
\batitle{The box-crossing property for critical two-dimensional oriented percolation}.
\bjtitle{Probab. Theory Related Fields}
\bvolume{171}
\bfpage{685}--\blpage{708}.
\bdoi{10.1007/s00440-017-0790-0}.
\bmrnumber{3827220}
\end{barticle}
\endbibitem

\bibitem{Durrett1984}
\begin{barticle}
\bauthor{\bsnm{Durrett}, \binits{R.}}
(\byear{1984}).
\batitle{Oriented percolation in two dimensions}.
\bjtitle{Ann. Probab.}
\bvolume{12}
\bfpage{999}--\blpage{1040}.
\bdoi{10.1214/aop/1176993140}.
\bmrnumber{757768}
\end{barticle}
\endbibitem

\bibitem{DurrettGriffeath1983}
\begin{barticle}
\bauthor{\bsnm{Durrett}, \binits{R.}} and
\bauthor{\bsnm{Griffeath}, \binits{D.}}
(\byear{1983}).
\batitle{Supercritical contact processes on {$\mathbb{Z}$}}.
\bjtitle{Ann. Probab.}
\bvolume{11}
\bfpage{1}--\blpage{15}.
\bdoi{10.1214/aop/1176993655}.
\bmrnumber{682796}
\end{barticle}
\endbibitem

\bibitem{FortuinKasteleynGinibre1971}
\begin{barticle}
\bauthor{\bsnm{Fortuin}, \binits{C. M.}},
\bauthor{\bsnm{Kasteleyn}, \binits{P. W.}} and
\bauthor{\bsnm{Ginibre}, \binits{J.}}
(\byear{1971}).
\batitle{Correlation inequalities on some partially ordered sets}.
\bjtitle{Comm. Math. Phys.}
\bvolume{22}
\bfpage{89}--\blpage{103}.
\bdoi{10.1007/BF01651330}.
\bmrnumber{309498}
\end{barticle}
\endbibitem

\bibitem{Harris1960}
\begin{barticle}
\bauthor{\bsnm{Harris}, \binits{T. E.}}
(\byear{1960}).
\batitle{A lower bound for the critical probability in a certain percolation process}.
\bjtitle{Proc. Cambridge Philos. Soc.}
\bvolume{56}
\bfpage{13}--\blpage{20}.
\bdoi{10.1017/S0305004100034241}.
\bmrnumber{115221}
\end{barticle}
\endbibitem

\bibitem{HilarioSaSanchisTeixeira2026}
\begin{barticle}
\bauthor{\bsnm{Hil{\'a}rio}, \binits{M. R.}},
\bauthor{\bsnm{S{\'a}}, \binits{M.}},
\bauthor{\bsnm{Sanchis}, \binits{R.}} and
\bauthor{\bsnm{Teixeira}, \binits{A.}}
(\byear{2026}).
\batitle{Percolation on stretched lattices: a static renormalization approach}.
\bjtitle{Probab. Theory Related Fields}
\bvolume{195}
\bfpage{451}--\blpage{496}.
\bdoi{10.1007/s00440-026-01470-4}.
\bmrnumber{5037662}
\end{barticle}
\endbibitem

\bibitem{KestenSidoraviciusVares2022}
\begin{barticle}
\bauthor{\bsnm{Kesten}, \binits{H.}},
\bauthor{\bsnm{Sidoravicius}, \binits{V.}} and
\bauthor{\bsnm{Vares}, \binits{M. E.}}
(\byear{2022}).
\batitle{Oriented percolation in a random environment}.
\bjtitle{Electron. J. Probab.}
\bvolume{27}
\bfpage{1}--\blpage{49}.
\bdoi{10.1214/22-EJP791}.
\bmrnumber{4442896}
\end{barticle}
\endbibitem

\bibitem{Liggett1985}
\begin{barticle}
\bauthor{\bsnm{Liggett}, \binits{T. M.}}
(\byear{1985}).
\batitle{An improved subadditive ergodic theorem}.
\bjtitle{Ann. Probab.}
\bvolume{13}
\bfpage{1279}--\blpage{1285}.
\bdoi{10.1214/aop/1176992811}.
\bmrnumber{806224}
\end{barticle}
\endbibitem

\bibitem{LiggettSchonmannStacey1997}
\begin{barticle}
\bauthor{\bsnm{Liggett}, \binits{T. M.}},
\bauthor{\bsnm{Schonmann}, \binits{R. H.}} and
\bauthor{\bsnm{Stacey}, \binits{A. M.}}
(\byear{1997}).
\batitle{Domination by product measures}.
\bjtitle{Ann. Probab.}
\bvolume{25}
\bfpage{71}--\blpage{95}.
\bdoi{10.1214/aop/1024404279}.
\bmrnumber{1428500}
\end{barticle}
\endbibitem

\end{thebibliography}

\end{document}